\newcommand{\LineComment}[1]{\Statex \hfill\textit{#1}}
\definecolor{darkblue}{RGB}{0,0,128}
\definecolor{darkred}{RGB}{128,0,0}
\definecolor{darkgreen}{RGB}{0,128,0}
\newcommand{\E}{E}
\newcommand{\B}{\mathcal{B}}
\newcommand{\N}{\mathbb{N}}
\newcommand{\G}{\mathcal{G}}
\newcommand{\X}{\mathcal{X}}
\newcommand{\norm}[1]{\left\lVert#1\right\rVert}
\providecommand{\abs}[1]{\left\lvert#1\right\rvert}
\providecommand{\norm}[1]{\left\lVert#1\right\rVert}
\newcommand{\cov}{\text{\upshape{cov}}}
\DeclareMathOperator*{\argmax}{\arg\!\max}
\begin{document}

\title{Approximations of the Restless Bandit Problem}

\author{\name Steffen Gr\"unew\"alder \email s.grunewalder@lancaster.ac.uk 
\AND
\name Azadeh Khaleghi \email a.khaleghi@lancaster.ac.uk 
\\
       \addr Department of Mathematics and Statistics \\
       Lancaster University\\
       Lancaster,  UK}
\editor{Peter Auer}

\maketitle

\begin{abstract}
The multi-armed restless bandit problem is studied in the case where  
the pay-off distributions are stationary $\varphi$-mixing. This version of the problem provides a more realistic model for most real-world applications, but cannot be optimally solved in practice, since it is known to be PSPACE-hard. 
The objective of this paper is to characterize a sub-class of the problem where {\em good} approximate solutions can be found using tractable approaches. Specifically, it is shown that under some conditions on the $\varphi$-mixing coefficients, a modified version of UCB can prove effective. 
The main challenge is that, unlike in the i.i.d. setting, the distributions of the sampled pay-offs may not have the same characteristics as those of the original bandit arms. In particular, the $\varphi$-mixing property does not necessarily carry over. This is overcome by carefully controlling the effect of a sampling policy on the pay-off distributions.
Some of the proof techniques developed in this paper can be more generally used in the context of online sampling under dependence. Proposed algorithms are accompanied with corresponding regret analysis. 
\end{abstract}

\section{Introduction}
As one of the simplest examples of 
sequential optimization under uncertainty, multi-armed bandit problems 
arise in various modern real-world applications, such as online advertisement, 
and Internet routing. 
These problems are typically studied under the assumption that 
the pay-offs are independently and identically distributed (i.i.d.), and the 
arms are independent. However, this assumption does not necessarily hold in many practical situations. 
Consider, for example, the problem of online advertisement in which the aim is to garner as many clicks 
as possible from a user. Grouping adverts into categories and associating with each category an arm, this problem turns into 
a multi-armed bandit.
There is dependence over time and across the arms since, for example, we expect a user to be more likely to select adverts that are related to her selections in the recent past. 

In this paper, we consider the multi-armed bandit problem in the case where the pay-offs are dependent  
and each arm evolves over time regardless of whether or not it is played. 
This is an instance of the so-called {\em restless} bandit problem \citep{WHIT88,GUH10,ORT14}. 
Since in this setting an optimal policy can leverage the inter-dependencies between the samples 
and switch between the arms at appropriate times, it can obtain an overall pay-off much higher than that given
by playing the {\em best arm}, i.e. the distribution with the highest expected pay-off, see Example~1 in \citep{ORT14}. 
However, finding the best such {\em switching strategy} is PSPACE-hard, even in the case where the process distributions are Markovian with known dynamics \citep*{TSI99}. 
Therefore, it is useful to consider relaxations of the problem with the aim to devise computationally tractable solutions that effectively approximate the optimal switching strategy. 
Approximations of the Markovian restless bandit problem {\em under known dynamics} have been previously considered, see, e.g. \citep{GUH10} and references therein. 
Our focus in this paper is on a more general setting, where the rewards have unknown distributions,
exhibit long-range dependencies, and have Markov chains as a special case. 

We are interested in a sub-class of the restless bandit problem, where the pay-offs have long-range dependencies. 
Since the nature of the problem calls for finite-time analysis, we further require that the dependence weakens over time, so as to make use of concentration inequalities in this setting. 
To this end, a natural approach is to assume that the pay-off distributions are stationary $\varphi$-mixing. 
The so-called $\varphi$-mixing coefficients $\varphi_n,~n \in \mathbb N$, of a sequence of random variables $\langle X_t\rangle_{t \in \mathbb N}$ measure the amount of dependence between the sub-sequences of $\langle X_t\rangle_{t \in \mathbb N}$ separated by $n$ time-steps. A process is said to be $\varphi$-mixing if this dependence vanishes with $n$. This notion is more formally defined in Section~\ref{sec:pre}. 
In Markov chains $\varphi$-mixing coefficients are closely related to mixing times. 
The mixing time of a Markov chain is a measure of how fast its distribution approaches the stationary distribution. In particular, it is defined to be the time that it takes for the distribution to be within $1/4^{\text{th}}$ of the stationary distribution as measured in total variation distance \citep{LEV08}[Sec 4.5]. A classical result by Davydov shows that the decrease in distance of the distribution of the Markov chain to the stationary distribution is controlled (up to a factor of $1/2$) by the $\varphi$-mixing coefficients of the Markov chain \citep{DOUK94}[pp.88]. While $\varphi$-mixing coefficients are related to the well-studied mixing properties of Markov chains, $\varphi$-mixing processes correspond to a wide variety of stochastic processes of which Markov chains are a special case \citep{DOUK94}. 

As discussed earlier, the optimal, yet notoriously infeasible strategy for this version of the problem is to switch between the arms. In this paper, we first address the question of {\em when} a relaxation obtained by identifying the arm with the highest stationary mean would lead to a viable approximation in this setting. For this purpose, we characterize the approximation error in terms of the amount of dependence between the pay-offs, and show that if $\varphi_1$ is small, the optimum of the relaxed problem is close to that given by the optimal switching strategy. Observe that this condition translates directly to the pay-off distributions being weakly dependent. 
Next, we address the question of {\em how} an optimistic approach can be devised to identify the best arm. To this end, we propose a UCB-type algorithm and show that it achieves logarithmic regret with respect to the highest stationary mean. Interestingly, the amount of dependence in the form of $\sum_i \varphi_i$ appears in the bound, and in the case where the pay-offs are i.i.d., we recover the regret bound of \cite{AUER02}. 

A familiar real-world example for the bandit problem considered in this paper corresponds to recommendation systems where the objective is to present personalized adverts, news-feeds or Massive Open Online Course (MOOC) material to each user. In these cases, depending on the specific application, each bandit arm could correspond to an appropriate subject category, indicating, for example, a 
genre of products or a class of topics. 
Once an arm is played, an item of the corresponding category could be selected at random and presented to the user. There are long-range dependencies between the pay-offs as reflected by the users' memory of their past observations. However, the dependence naturally decays over time, while users' short-term memory, which in turn influences the dominant mixing coefficients, can be controlled by capping the maximum frequency at which each specific recommendation is given to users. With minor modifications, this example carries over to a larger class of real-world sequential decision making problems which naturally possess a dependency structure imposed by users' memory.

Note that even this relaxed version of the problem is far from straightforward. The main challenge lies in obtaining confidence intervals around empirical estimates of the stationary means. Since Hoeffding-type concentration bounds exist for $\varphi$-mixing processes, it may be tempting to use such inequalities directly with standard UCB algorithms designed for the i.i.d. setting, to find the best arm. 
However, as we demonstrate in the paper, unlike in the i.i.d. setting, a policy in this framework may introduce strong couplings between past and future pay-offs in such a way that the distribution of the sampled sequence may not even be $\varphi$-mixing.
This is the reason why a standard UCB algorithm designed for i.i.d. settings is not suitable here, even when equipped with a concentration bound for $\varphi$-mixing processes. In fact, this oversight seems to have occurred in previous literature, specifically in the Improved-UCB-based approach of \cite{AUDIF15}. 
We refer to Section~\ref{sec:rt} for a more detailed discussion. 
We circumvent these difficulties by carefully taking {\em random times}\footnote{These correspond to random variables which determine the time at which an arm is sampled.}
into account and controlling the effect of a policy on the pay-off distributions. 
Some of our technical results can be more generally used to address the problem of online sampling under dependence.

Finally, while the study of the multi-armed bandit problem with {\em strongly} dependent pay-offs at its full generality is beyond the scope of this paper, we provide a complementary example for this regime. 
Specifically, we consider a setting where the bandit arms are governed by stationary Gaussian processes with slowly decaying covariance functions. Such high-dependence scenarios are quite common in practice. For instance, the throughput of radio channels changes slowly over time and the problem of choosing the best channel can be modeled by a bandit problem with strongly dependent pay-offs. 
The intuitive reason why in this setting it may also be possible to efficiently obtain approximately optimal solutions is that the strong dependencies can allow for the prediction of future rewards even from scarce observations. 
We give a simple switching strategy for this instance of the problem and show that it significantly outperforms a policy that aims for the best arm. Our regret bound for this algorithm directly reflects the dependence between the pay-offs: the higher the dependence the lower the regret. 
~\newline~
~\newline~
\noindent A summary of our main contributions is listed below.
\begin{itemize}
\item[i.] In an attempt to derive a computationally tractable solution for the restless bandit problem, we first identify a case where the optimal switching strategy can be approximated by playing the arm with the highest stationary mean. To this end, we show that the loss of settling for the highest stationary mean as opposed to finding the best switching strategy is controlled by the amount of inter-dependence as reflected by $\varphi_1$. This is shown in Proposition~\ref{prop:approx}.
\item[ii.] We provide a detailed example, namely Example~\ref{ex:nonMixingPolicy}, where we demonstrate the challenges in the non-i.i.d. bandit problem. 
In particular, we show that a policy in this framework may introduce strong couplings between past and future pay-offs in such a way that the resulting pay-off sequence may 
have a completely different dependency structure.
\item [iii.] We develop technical machinery to circumvent the difficulties introduced by the inter-dependence between the rewards, and allow us to control the effect of a policy on the pay-off distributions. 
Some of our derivations concerning sampling under dependence can be of independent interest. We further propose a UCB-type algorithm, namely Algorithm~\ref{alg:ucb} that deploys these tools to identify the arm with the highest stationary mean. 
\item[iv.] We provide an upper bound on the regret of Algorithm~\ref{alg:ucb} (with respect to the highest stationary mean). This is provided in Theorem~\ref{thm:regret_azal}.
The regret bound is a function of the amount of inter-dependence as reflected by the $\varphi$-mixing coefficients, and in the case where the pay-offs are i.i.d., we recover the regret bound of \cite{AUER02}[Thm. 1]. 
This result along with Proposition~\ref{prop:approx} allow us to argue that in the case where dependence is low Algorithm~\ref{alg:ucb} can be used to approximate the best switching strategy. 
\end{itemize}

The remainder of the paper is organized as follows. In Section \ref{sec:pre} we introduce preliminary notation and definitions. 
We formulate the problem in Section~\ref{sec:prob} and give our main results in Section~\ref{sec:results}. 
We conclude in Section \ref{sec:conc} with a discussion of open problems. 
\section{Preliminaries}\label{sec:pre}
We start with some useful notation before discussing basic definitions concerning stochastic processes. Since the bandit problem involves multiple arms (processes), we extend  the definition of  a $\varphi$-mixing process to what we call a jointly $\varphi$-mixing process, to be able to model the multi-armed bandit problem. Indeed, in many natural settings, the process is jointly $\varphi$-mixing. 
For example, as we demonstrate in Proposition \ref{lem:psi_mixing}, independent Markov chains are jointly $\varphi$-mixing.

\paragraph{Notation.}
Let $\mathbb{N}_+ := \{1,2,\ldots \}$ and $\overline{\mathbb{N}} := \mathbb{N} \cup \{\infty \}$ denote the set and extended set of natural numbers respectively. We introduce the abbreviation $\mathbf a_{m..n},~m,n \in \mathbb N_+, m \leq n$,  for sequences $a_m,a_{m+1},\dots,a_n$. Given a finite subset $C\subset \mathbb N_+$ and a sequence $\mathbf a$, we let $\mathbf a_C:=\{a_i: i \in C\}$ denote the set of elements of $\mathbf a$ indexed by $C$. If $X_C$ is a sequence of random variables indexed by $C \subset \mathbb N_+$, we denote by $\sigma(X_C)$ the smallest $\sigma$-algebra generated by $X_C$.
\paragraph{Notion of $\varphi$-dependence.}
Part of our results concern the so-called $\varphi$-dependence between $\sigma$-algebras defined as follows, see, e.g. \citep{DOUK94}. 
\begin{definition}\label{defn:phisig}
Consider a probability space $(\Omega, \mathcal A, P)$ and let $\mathcal U$ and $\mathcal V$ denote $\sigma$-subalgebras  
of $\mathcal A$ respectively. 
The $\varphi$-dependence between $\mathcal U$ and $\mathcal V$ is  is given by 
\begin{equation*}
\varphi(\mathcal U,\mathcal V) := \sup\{\abs{P(V) - P(V|U)} : U \in \mathcal U, P(U)>0, V \in \mathcal V \}.
\end{equation*}
\end{definition}
If $X$ and $Y$ are two random variables measurable with respect to $\mathcal A$ we simplify notation by letting $\varphi(X,Y):=\varphi(\sigma(X),\sigma(Y))$ denote the $\varphi$-dependence between their corresponding $\sigma$-algebras; distinction will be clear from the context. Similarly, if $X_A$ and $X_B$ are finite sequences of random variables, with $A,B \subset  \mathbb N_+$ their $\varphi$-dependence can be similarly defined as 
$$\varphi(X_A,X_B):=\varphi(\sigma(X_A),\sigma(X_B)).$$ 
In words, $\varphi(X_A,X_B)$ measures the maximal difference between the probability of an event $V$ and its conditional probability given an event $U$, where $U$ and $V$ are determined by random variables indexed by $A$ and $B$ respectively. 
The notion of $\varphi$-dependence carries over from probability measures to expectations. 
In particular, consider a real-valued random variable $X$ defined on some probability space $(\Omega,\mathcal{A},P)$, and denote by $\mathcal G$ some collected information in the form of a $\sigma$-subalgebra of $\mathcal A$. 
Let $E(X | \mathcal{G})$ denote Kolmogorov's conditional expectation, i.e. a $\mathcal{G}$-measurable random variable $Z$ such that $\int_B Z = \int_B X$ for all $B \in \mathcal{G}$.
As follows from Theorem~\ref{prop:prop1} below, due to \cite{bradley2007introduction}[vol. 1 pp. 124], the difference between $E(X | \mathcal{G})$ and  $E X$ is effectively upper-bounded by $\varphi(\mathcal G, \sigma(X))$. 
\begin{theorem}[\cite{bradley2007introduction}]\label{prop:prop1}
Let $(\Omega,\mathcal{A},P)$ be a probability space, let $X$ be a real-valued random variable with $\|X\|_1  < \infty$ and let $\mathcal{G}$ be some $\sigma$-subalgebra of $\mathcal{A}$. Then
\begin{equation}\label{eq:prop1:1}
2\varphi(\mathcal{G},\sigma(X)) = \sup \| E(Y | \mathcal{G}) - E(Y)\|_1 / \|Y\|_1,
\end{equation}
where the supremum is taken over all $\sigma(X)$-measurable random variables $Y$ with $\|Y\|_1 < \infty$. Furthermore, for any $B \in \mathcal{G}$ it holds that
\begin{equation}\label{eq:prop1:2}
\int_B |E(X | \mathcal{G}) - E(X)| \,dP \leq 2P(B)  \|X\|_\infty \varphi(\mathcal{G},\sigma(X)).    
\end{equation}
\end{theorem}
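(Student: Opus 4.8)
The plan is to prove the identity \eqref{eq:prop1:1} as two matching inequalities and then deduce \eqref{eq:prop1:2} by localising the upper-bound argument. Write $\varphi:=\varphi(\mathcal{G},\sigma(X))$. For a $\sigma(X)$-measurable $Y$ with $\norm{Y}_1<\infty$ put $W:=E(Y|\mathcal{G})-E(Y)$, a $\mathcal{G}$-measurable variable with $E(W)=0$. The three ingredients I would use are the defining property $\int_{B}E(Y|\mathcal{G})\,dP=\int_{B}Y\,dP$ for $B\in\mathcal{G}$; the pull-out identity $E(Y\mathbf{1}_{U})-P(U)E(Y)=E(Y\,h_{U})$, where $h_{U}:=E(\mathbf{1}_{U}|\sigma(X))-P(U)$ is $\sigma(X)$-measurable (legitimate since $Y$ is $\sigma(X)$-measurable and $h_U$ is bounded); and Definition~\ref{defn:phisig}.

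For the upper bound the crucial lemma is that $\abs{h_{U}}\le\varphi$ almost surely, for every $U\in\mathcal{G}$. Since $h_U$ is $\sigma(X)$-measurable it is enough to bound its averages over $\sigma(X)$-sets: for $V\in\sigma(X)$ with $P(V)>0$, $\tfrac{1}{P(V)}\int_{V}h_{U}\,dP=P(U|V)-P(U)$, whose modulus is at most $\varphi$ by Definition~\ref{defn:phisig}. Testing this with the $\sigma(X)$-measurable set $V=\{h_{U}>\varphi\}$ forces $P(h_{U}>\varphi)=0$, and symmetrically $P(h_{U}<-\varphi)=0$, which proves the lemma. Now split $\Omega$ into $U_{+}=\{W>0\}$ and $U_{-}=\{W\le 0\}$, both in $\mathcal{G}$; since $E(W)=0$ we have $\norm{W}_{1}=\int_{U_{+}}W\,dP-\int_{U_{-}}W\,dP$, and each integral equals $E(Y\,h_{U_{\pm}})$, which by the lemma and H\"older is bounded in modulus by $\varphi\norm{Y}_{1}$. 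Hence $\norm{W}_{1}\le 2\varphi\norm{Y}_{1}$, so the supremum in \eqref{eq:prop1:1} is at most $2\varphi$.

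For the reverse inequality I would use indicator test functions. Fix $U\in\mathcal{G}$ and $V\in\sigma(X)$ and take $Y=\mathbf{1}_{V}$, so $\norm{Y}_{1}=P(V)$ and $E(Y|\mathcal{G})=P(V|\mathcal{G})$. The same sign-decomposition gives $\norm{E(Y|\mathcal{G})-P(V)}_{1}=2\sup_{U'\in\mathcal{G}}\big(P(U'\cap V)-P(U')P(V)\big)\ge 2\big(P(U\cap V)-P(U)P(V)\big)=2P(V)\big(P(U|V)-P(U)\big)$. Dividing by $\norm{Y}_{1}=P(V)$ and taking the supremum over $U\in\mathcal{G}$, $V\in\sigma(X)$ produces $2\varphi$ by Definition~\ref{defn:phisig}, so the supremum in \eqref{eq:prop1:1} is also at least $2\varphi$, giving equality.

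Finally, \eqref{eq:prop1:2} comes from the same decomposition applied to $X$ and localised to $B$. Splitting $B$ into $B_{+}=B\cap\{W>0\}$ and $B_{-}=B\cap\{W\le 0\}$, both in $\mathcal{G}$, one has $\int_{B}\abs{W}\,dP=\int_{B_{+}}W\,dP-\int_{B_{-}}W\,dP$, and $\int_{B_{\pm}}W\,dP=P(B_{\pm})\int X\,d\nu_{\pm}$ with $\nu_{\pm}:=P(\cdot|B_{\pm})-P$. Each $\nu_{\pm}$ satisfies $\nu_{\pm}(\Omega)=0$ and $\abs{\nu_{\pm}(V)}\le\varphi$ for $V\in\sigma(X)$ by Definition~\ref{defn:phisig}, so its total variation on $\sigma(X)$ is at most $2\varphi$ and $\abs{\int X\,d\nu_{\pm}}\le 2\norm{X}_{\infty}\varphi$. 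Summing the two pieces yields $\int_{B}\abs{W}\,dP\le 2P(B)\norm{X}_{\infty}\varphi$. The step I expect to be the main obstacle is the almost-sure lemma $\abs{h_{U}}\le\varphi$: Definition~\ref{defn:phisig} only controls conditional probabilities event-by-event, whereas a uniform a.s.\ bound on the $\sigma(X)$-measurable density $E(\mathbf{1}_{U}|\sigma(X))-P(U)$ is needed, and it is the level-set testing device that bridges the two. Keeping the asymmetric roles of $\mathcal{G}$ and $\sigma(X)$ aligned is the delicate point here, and it is also what decides which norm, $\norm{\cdot}_{1}$ or $\norm{\cdot}_{\infty}$, provides the correct normalisation in \eqref{eq:prop1:1} versus \eqref{eq:prop1:2}.
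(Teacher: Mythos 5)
The paper does not actually prove this theorem: it is imported from Bradley (vol.~1) without proof, so there is no in-paper argument to compare against and your self-contained derivation is a genuine addition. Your proof of \eqref{eq:prop1:2} is correct: splitting $B$ by the sign of $W=E(X|\mathcal{G})-E(X)$ into $\mathcal{G}$-measurable pieces $B_\pm$, noting that $\nu_\pm=P(\cdot|B_\pm)-P$ satisfies $\nu_\pm(\Omega)=0$ and $|\nu_\pm(V)|\leq\varphi(\mathcal{G},\sigma(X))$ for $V\in\sigma(X)$ (here the conditioning event $B_\pm$ lies in $\mathcal{G}$, exactly as Definition~\ref{defn:phisig} requires), and integrating $X$ against $\nu_\pm$ is sound. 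This inequality is the only part of the theorem the paper actually uses (in Lemma~\ref{lemma11} and Propositions~\ref{prop:approx}, \ref{prop:st-phil} and \ref{lemma22}).

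For the identity \eqref{eq:prop1:1} there is a genuine mismatch that you flag at the end but do not resolve. Your key lemma controls $h_U=E(\mathbf{1}_U|\sigma(X))-P(U)$ through the averages $\tfrac{1}{P(V)}\int_V h_U\,dP=P(U|V)-P(U)$ with $U\in\mathcal{G}$, $V\in\sigma(X)$; here the conditioning event is $V\in\sigma(X)$, so Definition~\ref{defn:phisig} bounds this by $\varphi(\sigma(X),\mathcal{G})$, \emph{not} by $\varphi(\mathcal{G},\sigma(X))$ as you assert. The same reversal occurs in your lower bound, which again produces $P(U|V)-P(U)$. Since $\varphi$ is genuinely asymmetric, this matters: on the four-point uniform space with $\mathcal{G}$ generated by $\{1,2\}$ and $\sigma(X)$ by $\{1\}$ one computes $\varphi(\mathcal{G},\sigma(X))=1/4$ while $\varphi(\sigma(X),\mathcal{G})=1/2$, and taking $Y=\mathbf{1}_{\{1\}}$ gives $\|E(Y|\mathcal{G})-E(Y)\|_1/\|Y\|_1=1=2\varphi(\sigma(X),\mathcal{G})$. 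So what your argument actually establishes is $2\varphi(\sigma(X),\mathcal{G})=\sup\|E(Y|\mathcal{G})-E(Y)\|_1/\|Y\|_1$, which is the correct form of the $L^1$-normalised characterisation (the un-reversed coefficient $\varphi(\mathcal{G},\sigma(X))$ pairs with the $L^\infty/L^\infty$ normalisation used implicitly in \eqref{eq:prop1:2}); the statement as printed has the arguments in the order that fits \eqref{eq:prop1:2} but not \eqref{eq:prop1:1}. Your derivation is therefore essentially right, but the step ``whose modulus is at most $\varphi$ by Definition~\ref{defn:phisig}'' is only valid after swapping the arguments of $\varphi$, and as a proof of the literal display \eqref{eq:prop1:1} it does not close.
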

Observe that because $X$ is trivially $\sigma(X)$-measurable \eqref{eq:prop1:1} given in the theorem implies that 
\[
\| E(X | \mathcal{G}) - E(X)\|_1 \leq 2 \|X\|_1 \varphi(\mathcal{G},\sigma(X)).
\]

\paragraph{Stochastic Processes \& $\varphi$-mixing Properties.}  Let $(\mathcal X,\mathcal B_{\mathcal X})$ be a measurable space; we let $\mathcal X \subset [0,1]$ \footnote{More generally $\mathcal X$ can be a finite set or a closed interval $[a,b]$, for $a<b$, $a,b \in \mathbb{R}$.} and denote by $\mathcal B_{\mathcal X}^{(m)}$ the Borel $\sigma$-algebra on $\mathcal X^m,~m \in \N_+$. We denote by $\X^{\infty}$ the set of all $\X$-valued infinite sequences indexed by $\N_+$. A stochastic process can be modeled as a probability measure over the space $(\X^\infty, \B)$ where $\B$ denotes the $\sigma$-algebra on $\X^{\infty}$ generated by the cylinder sets. Associated with the stochastic process is a sequence of random variables $X_1, X_2, \ldots$, where $X_t:\mathcal{X}^\infty \rightarrow \mathcal{X}$ is the projection onto the $t$'th element, i.e. $X_t(\omega)=\omega_t$ for $\omega \in \mathcal X^{\infty}$ and $t \in \mathbb N_{+}$.   
A process $\rho$ is stationary if 
$\rho(X_{1.._m} \in B)= \rho(X_{i+1..i+m} \in B )$ 
for all Borel sets $B \in \mathcal B_{\mathcal{X}}^{(m)}$, $~i,m \in \mathbb{N}_+$. The term {\em stochastic process} refers to either the process distribution $\rho$ or the associated sequence of random variables $X_t,~t \in \mathbb N_+$; reference will be clear from the context.
\begin{definition}[Stationary $\varphi$-mixing Process]\label{defn:phi-single}
Consider a stationary stochastic process $\langle X_i \rangle_{i \in \mathbb{N}_+}$. 
Its $\varphi$-mixing coefficients are given by 
$$\varphi_n:=\sup_{u,v \in \mathbb{N}_+}\varphi(X_{1..u},X_{u+n..u+n+v-1}),~ n \in \N_+$$ 
and measure the $\varphi$-dependence between $\sigma(X_{1..u})$ and $\sigma(X_{u+n..u+n+v-1})$
\begin{align*}
 &      {{\underbracket[2pt]{X_1,\dots,X_u}}}{~}
      {\textcolor{white}{\underbracket[2pt]{{\textcolor{darkblue}{~\leftarrow\text{gap of length }n\rightarrow~}}}}}
 {~}      
      {{\underbracket[2pt]{X_{u+n},\ldots,X_{u+n+v-1}}}}
\end{align*}
The process is said to be $\varphi$-mixing if 
$\lim_{n\rightarrow \infty}\varphi_n=0.$ 
\end{definition}
When modeling a bandit problem in this paper, we are concerned with some $k \in \N_+$ stochastic processes with a joint distribution that is stationary $\varphi$-mixing. More specifically, for a fixed $k \in \N$, let $(\Omega,\mathcal{A},P)$ be a probability space where $\Omega := \Omega_1 \times \ldots \times \Omega_k$ with $\Omega_i:=\X^\infty,~i\in 1..k$,  $P$ a probability measure and $\mathcal{A}$ obtained via the cylinder sets. Let $\B_{\X}^{(m,k)}$ denote the Borel $\sigma$-algebra on $(\X^m)^k,~m \in \N_+$. In much the same way as with the single-process described above, associated with the joint process is a sequence of random variables $\langle X_{t,j}\rangle,~t\in \N_+,j \in 1..k$ where $X_{t,j}:\Omega \rightarrow \X$ is the projection on to the $t,j$-th element, i.e. $X_{t,j}(\omega)=\omega_{t,j}$ for $\omega \in \Omega,~t \in \N_+,j\in1..k$. 
The measure $P$ is stationary if 
$P(X_{1..m,1..k} \in B) = P(X_{i+1..i+m,1..k} \in B)$ for every $B \in \B_{\X}^{(m,k)}$ and $i,m \in \N_+$. As above, the term stochastic process is used interchangeably to correspond to the sequence of random variables $\langle X_{t,j}\rangle,~t\in \N_+,j \in 1..k$ or their corresponding joint measure $P$. 
\begin{definition}[Jointly Stationary $\varphi$-mixing Processes]\label{defn:phi-joint}
For a fixed $k \in \N_+$, consider a stationary process $\langle X_{t,j}\rangle,~t\in \N_+,j \in 1..k$. Its $\varphi$-mixing coefficients are given by 
$$\varphi_n:=\sup_{u,v \in \mathbb{N}_+}\varphi(X_{1..u,1..k},X_{u+n..u+n+v-1,1..k}),~ n \in \N_+$$ 
and measure the $\varphi$-dependence between $\sigma(X_{1..u,1..k})$ and $\sigma(X_{u+n..u+n+v-1,1..k})$. The process is said to be jointly $\varphi$-mixing if $\lim_{n\rightarrow \infty} \varphi_n = 0$. 
\end{definition}
We use $\varphi_n$ to denote the $\varphi$-mixing coefficient corresponding to a single process or to a joint process given by Definitions~\ref{defn:phi-single}~and~\ref{defn:phi-joint} respectively; the notion will be apparent from the context. 
Under the assumption that the joint process is stationary $\varphi$-mixing, there could be dependence between the processes, as the mixing  requirement needs only to be fulfilled by the joint process. The assumption that the process is jointly $\varphi$-mixing is fulfilled by a variety of well-known models, including independent Markov chains. 
More generally, as follows from Proposition~\ref{lem:psi_mixing} below, if we have $k$ independent $\psi$-mixing processes then the joint process is $\varphi$-mixing. The $\psi$-mixing property is defined in the same way as $\varphi$-mixing whereby the $\varphi$-dependence given by Definition~\ref{defn:phisig} is replaced with 
$$\psi(\mathcal U,\mathcal V) := \sup\{\abs{1 - \rho(U\cap V)/(\rho(V) \rho(U))} : \!U \in \mathcal{U},~\rho(U)>0, V \in \mathcal{V},\rho(V) > 0 \},$$ and the $\psi$-mixing coefficients 
are defined in a manner analogous to Definitions~\ref{defn:phi-single}. A $\psi$-mixing process is also $\varphi$-mixing and the $\psi$-mixing coefficients upper bound $\varphi$-mixing coefficients. 
 \begin{proposition} \label{lem:psi_mixing}
Let $(\Omega,\mathcal{A},P)$ be some probability space with
$k$ mutually independent processes defined on it. If each of these processes is $\psi$-mixing then the joint process is also $\psi$-mixing and for all $i \in \mathbb{N}$,  $1+ \tilde \psi_i \leq (1+\psi_i)^k$, where $\tilde \psi_i$ are the mixing coefficients of the joint process and the $\psi_i$ are upper-bounds on the mixing coefficients of the individual processes. 
\end{proposition}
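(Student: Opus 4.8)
The plan is to translate the event-based definition of the $\psi$-coefficient into a statement about Radon--Nikodym densities, where mutual independence factorizes cleanly, and then to control the resulting product. Fix the gap $n$ and block lengths $u,v$. For each process $j \in 1..k$ write $\xi_j := X_{1..u,j}$ for its past block and $\eta_j := X_{u+n..u+n+v-1,j}$ for its future block, and let $\nu_j$ be the joint law of $(\xi_j,\eta_j)$, with past- and future-marginals $\nu_j^{\xi}$ and $\nu_j^{\eta}$. First I would record the standard characterization (see \citep{bradley2007introduction}) that $\nu_j \ll \nu_j^{\xi} \otimes \nu_j^{\eta}$, with density $f_j$, and that the $\psi$-coefficient $\psi^{(j)}$ between these two blocks equals $\operatorname{ess\,sup}|f_j - 1|$; equivalently $\operatorname{ess\,inf} f_j \ge 1 - \psi^{(j)}$ and $\operatorname{ess\,sup} f_j \le 1 + \psi^{(j)}$ almost everywhere. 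The only point to check here is that the supremum in the $\psi$-coefficient over events $U,V$ --- which are precisely measurable rectangles in the (past,\,future) product coordinates --- of the normalized averages $\nu_j(U \times V)/(\nu_j^{\xi}(U)\,\nu_j^{\eta}(V))$ recovers the essential extrema of $f_j$, which is a Lebesgue/martingale differentiation argument.

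Next I would exploit mutual independence. Since the $k$ processes are independent, the joint law of $\bigl((\xi_1,\dots,\xi_k),(\eta_1,\dots,\eta_k)\bigr)$ is the product measure $\bigotimes_{j=1}^k \nu_j$, and its past- and future-marginals are $\bigotimes_j \nu_j^{\xi}$ and $\bigotimes_j \nu_j^{\eta}$. After reordering coordinates so that all pasts precede all futures, $\bigotimes_j (\nu_j^{\xi} \otimes \nu_j^{\eta}) = \bigl(\bigotimes_j \nu_j^{\xi}\bigr) \otimes \bigl(\bigotimes_j \nu_j^{\eta}\bigr)$, so the density $F$ of the joint law with respect to the product of its two marginals factorizes as $F = \prod_{j=1}^k f_j$. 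Because the factors $f_j$ depend on disjoint (hence, under the product measure, independent) coordinate blocks and are nonnegative, the essential extrema multiply: $\operatorname{ess\,sup} F = \prod_j \operatorname{ess\,sup} f_j$ and $\operatorname{ess\,inf} F = \prod_j \operatorname{ess\,inf} f_j$.

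It then remains to turn these identities into the claimed bound. Writing $M_j := \operatorname{ess\,sup} f_j$ and $m_j := \operatorname{ess\,inf} f_j$, the per-process characterization gives $M_j \le 1 + \psi^{(j)} \le 1 + \psi_n$ and $m_j \ge 1 - \psi^{(j)} \ge 1 - \psi_n$, where $\psi_n$ is the common upper bound on the individual coefficients at gap $n$. For the joint block, $\tilde\psi = \operatorname{ess\,sup}|F-1| = \max\bigl(\prod_j M_j - 1,\; 1 - \prod_j m_j\bigr)$. The upper branch is immediate, since $1 + \bigl(\prod_j M_j - 1\bigr) = \prod_j M_j \le (1+\psi_n)^k$. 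For the lower branch I must show $2 - \prod_j m_j \le (1+\psi_n)^k$: when $\psi_n \ge 1$ this holds because $\prod_j m_j \ge 0$ and $(1+\psi_n)^k \ge 2$, and when $\psi_n < 1$ it follows from $\prod_j m_j \ge (1-\psi_n)^k$ together with the elementary inequality $(1+\psi_n)^k + (1-\psi_n)^k \ge 2$, which is just the sum of the even-index binomial terms. Hence $1 + \tilde\psi \le (1+\psi_n)^k$ for every $u,v$; taking the supremum over $u,v$ yields $1 + \tilde\psi_n \le (1+\psi_n)^k$ for all $n$, and since $\psi_n \to 0$ forces $\tilde\psi_n \to 0$, the joint process is $\psi$-mixing.

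The main obstacle I anticipate is the first step --- justifying rigorously that the $\psi$-coefficient is the essential supremum of the density deviation, and that the factorization $F = \prod_j f_j$ is legitimate; this requires careful handling of product measures, absolute continuity across the $k$ coordinate blocks, and the reordering of past/future coordinates. The remaining subtlety, easy to overlook, is that $\psi$ (unlike $\varphi$) may exceed $1$, so the lower-tail term $1 - \operatorname{ess\,inf} F$ is not automatically dominated by the upper-tail term $\operatorname{ess\,sup} F - 1$ and must be bounded separately, as above.
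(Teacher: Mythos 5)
Your proof is correct, but it follows a genuinely different route from the paper's. The paper works entirely with the event-based definition: step (i) establishes the bound $|P(U)P(V)-P(U\cap V)|\leq((1+\psi_n)^k-1)P(U)P(V)$ for finite intersections $U=\bigcap_j U_j$, $V=\bigcap_j V_j$ by a telescoping product argument (which handles both tails at once through the absolute values); steps (ii)--(iii) then transport this to arbitrary $U\in\mathcal{G}$, $V\in\mathcal{H}$ via an inverse-measure-preserving map onto a product space and an explicit $\epsilon$-approximation of arbitrary product-measurable sets by finite unions of measurable rectangles, with the attendant bookkeeping of inequalities \eqref{eq:nearly_disjoint}--\eqref{eq:nd2}. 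You instead invoke the Radon--Nikodym characterization $\psi(\mathcal U,\mathcal V)=\operatorname{ess\,sup}|f-1|$, under which mutual independence gives the factorization $F=\prod_j f_j$ and the passage from rectangles to arbitrary events is absorbed once and for all into the identity $Q(W)=\int_W F$; the paper's entire step (iii) disappears. The trade-off is that your first step is itself a nontrivial lemma: you must prove (or cite from Bradley) that the supremum of the normalized ratio over \emph{rectangles} already equals the essential supremum of the density, which requires an extension argument (e.g.\ observing that $\psi^*(P_\xi\otimes P_\eta)-Q$ is a signed measure that is nonnegative on the algebra of finite unions of rectangles, hence nonnegative everywhere), together with the absolute-continuity caveat that the density only exists when the coefficient is finite (for indices where $\psi_n=\infty$ the claimed inequality is vacuous, so nothing is lost). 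Your explicit treatment of the lower tail $1-\prod_j m_j$ via $(1+\psi_n)^k+(1-\psi_n)^k\geq 2$ is a point the paper never has to isolate, and you are right that it cannot be skipped in the density formulation since $\psi$ may exceed $1$. Both arguments are sound; the paper's is more self-contained and elementary, yours is shorter and more conceptual once the density characterization is granted.
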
 
The proof is provided in Appendix~\ref{app:app2}. 
Using the above result,  it can be shown that independent Markov chains are jointly $\varphi$-mixing. More specifically, we have the following example.
\begin{corollary}[Independent Markov chains are jointly $\varphi$-mixing.]\label{ex:indmc}
A set of $k \in \mathbb N_+$ mutually independent, stationary ergodic, finite-state Markov processes $\rho_i,~i=1..k$ give rise to a jointly stationary $\varphi$-mixing process.
\end{corollary}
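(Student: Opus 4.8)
The plan is to reduce the claim to Proposition~\ref{lem:psi_mixing} by establishing that every single stationary ergodic finite-state Markov chain is $\psi$-mixing, and then to transfer the $\psi$-mixing property first to the joint process and finally down to $\varphi$-mixing. Since the text already records that $\psi$-mixing implies $\varphi$-mixing with $\varphi_n \le \psi_n$, it suffices to prove that the $k$ individual chains are $\psi$-mixing and then invoke the proposition.

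First I would show that for a single finite-state stationary Markov chain $\rho$ with transition matrix $P$ and stationary distribution $\pi$, the $\psi$-mixing coefficient collapses, by the Markov property, to a quantity involving only the $n$-step transition probabilities, namely
$$\psi_n = \max_{x,y\,:\,\pi(x)\pi(y)>0}\left|\frac{P^n(x,y)}{\pi(y)} - 1\right|.$$
The $\ge$ direction is immediate by restricting the supremum in the definition of $\psi$ to the singleton events $U=\{X_u=x\}$ and $V=\{X_{u+n}=y\}$ and using stationarity. For the $\le$ direction I would take arbitrary events $U \in \sigma(X_{1..u})$ and $V \in \sigma(X_{u+n..u+n+v-1})$, decompose them over the endpoint states by writing $A_x := U \cap \{X_u=x\}$ and $B_y := V \cap \{X_{u+n}=y\}$, and apply the Markov property twice to obtain $\rho(A_x \cap B_y) = \rho(A_x)\rho(B_y)\,P^n(x,y)/\pi(y)$. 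Summing over $x,y$ then exhibits $\rho(U\cap V)/(\rho(U)\rho(V))$ as a convex combination, weighted by $\rho(A_x)\rho(B_y)$, of the elementary ratios $P^n(x,y)/\pi(y)$, so its deviation from $1$ cannot exceed the maximal elementary deviation.

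Next I would invoke ergodicity and finiteness. Since each chain is irreducible and aperiodic on a finite state space, its stationary distribution is unique with full support, and the Perron--Frobenius theorem gives geometric convergence $P^n(x,y) \to \pi(y)$; in particular $P^n(x,y)>0$ for all $x,y$ once $n$ is large. Hence both the upper and lower deviations of $P^n(x,y)/\pi(y)$ from $1$ vanish geometrically, so $\psi_n \to 0$ and each individual chain is $\psi$-mixing. Applying Proposition~\ref{lem:psi_mixing} to the $k$ mutually independent $\psi$-mixing chains yields $1 + \tilde\psi_n \le (1+\psi_n)^k \to 1$, so the joint process is $\psi$-mixing; and since $\psi$-mixing implies $\varphi$-mixing with $\varphi_n \le \psi_n$, the joint process is $\varphi$-mixing, which is the assertion.

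The main obstacle is the reduction of $\psi_n$ to the elementary transition-probability form for general events in the past and future $\sigma$-algebras. The delicate point is controlling the \emph{lower} tail of the ratio $\rho(U\cap V)/(\rho(U)\rho(V))$ — the term $1 - \rho(U\cap V)/(\rho(U)\rho(V))$ in the definition of $\psi$ — since $\psi$-mixing, unlike $\varphi$-mixing, demands two-sided control and is only finite when the relevant joint probabilities are bounded away from their independent product by a multiplicative factor. This is precisely where finiteness of the state space and ergodicity are indispensable: they guarantee $\min_y \pi(y)>0$ and $P^n(x,y)>0$ for large $n$, which keeps the denominators from degenerating and forces the elementary ratios to be uniformly close to $1$.
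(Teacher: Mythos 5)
Your proposal is correct and follows essentially the same route as the paper: establish that each individual stationary ergodic finite-state chain is $\psi$-mixing, invoke Proposition~\ref{lem:psi_mixing} for the joint process, and use that $\psi$-mixing implies $\varphi$-mixing. The only difference is that the paper simply cites Theorem~3.1 of \cite{bradley2005basic} for the single-chain $\psi$-mixing property, whereas you prove it directly via the identity $\rho(A_x\cap B_y)=\rho(A_x)\rho(B_y)P^n(x,y)/\pi(y)$ and the resulting convex-combination bound $\psi_n=\max_{x,y}\lvert P^n(x,y)/\pi(y)-1\rvert$ together with Perron--Frobenius; that argument is sound (reading ``ergodic'' as irreducible and aperiodic, as both the paper and Bradley's theorem implicitly require).
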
 
\begin{proof}
By Theorem~3.1 of \cite{bradley2005basic} each process $\rho_i,~i=1..k$ is $\psi$-mixing. Moreover, by Proposition~\ref{lem:psi_mixing} above, $k$ mutually independent $\psi$-mixing processes are jointly $\varphi$-mixing. As a result $\rho_i,~i=1..k$, are jointly stationary $\varphi$-mixing. 
\end{proof}
The significance of the above observation is that jointly $\varphi$-mixing processes have mutually independent Markov processes as special case.

\section{Problem Formulation: the Jointly $\varphi$-mixing Bandit Problem.}\label{sec:prob}
We assume in the sequel that a total of $k < \infty$ bandit arms are given, 
where for each $i \in 1..k$,  arm $i$ corresponds to a stationary process that generates a  time series of pay-offs $X_{1,i}, X_{2,i}, \ldots$ Furthermore, we assume that the joint process over the $k$ arms is $\varphi$-mixing in the sense of Definition~\ref{defn:phi-joint}, and that its sequence of mixing coefficients is summable, i.e. $\norm{\varphi}:=\sum_{i=1}^\infty \varphi_i < \infty$. 
Each process has stationary mean $\mu_i,~i=1..k$ and we denote by $\mu^*:= \max\{\mu_1,\ldots,\mu_k\}$ 
the highest stationary mean. We sometimes denote the arm with the highest stationary mean as the {\em best arm}. At every time-step $t \in \mathbb N_+$, a player chooses one of $k$ arms according to a policy $\pi_t$ and receives a reward $X_{t,\pi_t}$. 
The player's objective is to maximize the sum of the pay-offs received. 
The policy has access only to the pay-offs gained at earlier stages and to the arms it has chosen. Let $\langle \mathcal{F}_t\rangle_{t \geq 0}$ be a filtration that tracks the pay-offs obtained in the past $t$ rounds, i.e. $\mathcal{F}_0 = \{\emptyset,\Omega\}$, and $\mathcal{F}_{t} = \sigma(X_{1,\pi_1}, \ldots, X_{t,\pi_t}),~t \geq 1$.
A policy is a sequence of mappings $\pi_t:\Omega \rightarrow \{1,\ldots,k\}$, $t \geq 1$, each of which is measurable with respect to $\mathcal{F}_{t-1}$. 
Note that the assumption that $\pi_t,~t\geq 1$ is measurable with respect to $\mathcal{F}_{t-1}$ is equivalent to the assumption that the policy can be written as a function of the past pay-offs and chosen arms, see, e.g. \citep{SHIR91}[Thm. 3, pp.174]. 
Let $\Pi = \{ \pi:=\langle \pi_t \rangle_{t\geq 1} : \pi_t \text{ is } \mathcal{F}_{t-1} \text{-measurable for all } t\geq 1\}$  denote the space of all possible policies. We also let $\G_t$ denote the filtration that keeps track of all the information available up to time $t$ (including unobserved pay-offs). More specifically, let $\langle \mathcal{G}_t\rangle_{t \geq 0}$, $\mathcal{G}_0 = \mathcal{N}$ and $\mathcal{G}_t = \sigma(\bigcup_{i=1}^k \bigcup_{s=1}^t \sigma(X_{s,i}) \cup \mathcal{N})$ for $t \geq 1$, where $\mathcal{N}$  is the family of measurable sets of $P$-measure  zero.
We define the maximal value that can be achieved in $n$ rounds as 
\begin{equation}\label{eq:nu}
v^*_n = \sup_{\pi \in \Pi} \sum_{t=1}^n \E X_{t,\pi_t}.
\end{equation}
The regret that builds up over $n$ rounds for any strategy $\pi$ is 
\begin{equation}\label{eq:regret}
\mathcal{R}_{\pi}(n) := v^*_n - E(X_{t,\pi_t}).
\end{equation}
To simplify notation, we may use $\mathcal R(n)$ when the policy $\pi$ is clear from the context. 
\begin{remark}
Requiring $\norm{\varphi}<\infty$, which is standard in the literature on empirical process theory involving mixing processes, does not lead to a strong assumption. This condition is already fulfilled by any stationary ergodic finite-state Markov chain, see, e.g. \citep{bradley2005basic}. In fact, as follows from Theorem~3.4 of \cite{bradley2005basic} the $\varphi$-mixing coefficient corresponding to any (not necessarily stationary) $\varphi$-mixing Markov process has an exponentially fast rate of decay, giving rise to a summable sequence of $\varphi$-mixing coefficients. Note that our focus in the sequel is on the more general case of $\varphi$-mixing (not necessarily Markov) processes. 
\end{remark}
\section{Main Results} \label{sec:results}
We consider the restless bandit problem in a setting where the reward distributions are jointly $\varphi$-mixing as formulated in Section~\ref{sec:prob}. 
Recall that, while the optimal strategy in this case is to switch between the arms, obtaining the best switching strategy is PSPACE-hard. 
We address the question of {\em when} and {\em how} a good and computationally tractable approximation of the optimal policy can be obtained in this setting. 
The former question is answered in Section~\ref{sec:approx} where we characterize (in terms of $\varphi_1$) the loss of settling for the highest stationary mean as opposed to following the best switching strategy. We show that for small $\varphi$-mixing coefficients, the optimum of this relaxed problem is close to that given by $v^*_n$. To answer the latter, we devise a UCB-type algorithm in Section~\ref{sec:ucb} to identify the arm with the highest stationary mean. 
The main challenge lies in building confidence intervals around empirical estimates of the stationary means. 
Indeed, as we demonstrate in Section~\ref{sec:rt}, unlike in the i.i.d. setting, a policy in this framework may introduce strong couplings between past and future pay-offs in such a way that the resulting pay-off sequence may not even be $\varphi$-mixing.  
As a result, a standard UCB algorithm designed for an i.i.d. setting 
is not suitable here, even when equipped with a Hoeffding-type concentration bound for $\varphi$-mixing processes. 
We circumvent these difficulties in Sections~\ref{sec:approx}~and~\ref{sec:ucb} by controlling the effect of a policy on the pay-off distributions. Part of the analysis in these two sections relies on some technical results for $\varphi$-mixing processes outlined in Appendix~\ref{sec:tech}, which may be of independent interest. 
%
Finally, our results for the weakly dependent reward distributions are complemented in Section~\ref{sec:gauss}, where we study an example of a class of strongly dependent processes and give a simple switching strategy that significantly outperforms a policy that aims for the best arm. 
\subsection{Policies, Random Times, and the $\varphi$-mixing Property}\label{sec:rt} 
Recall that a policy $\pi_t,~t \in \mathbb N_+$ is a function which based on the past (observed) data samples one of $k$ bandit arms at time-step $t$. 
Therefore, since this decision is based on samples generated by a random process, the times at which bandit arms are played can be naturally modeled via {\em random times}. More formally, 
denote by $\tau_{i,j}: \Omega \rightarrow {\mathbb{N}}_+$ a random variable which determines the time at which the $j$-th arm is sampled for the $i$-th time, $i\in \mathbb N_+$ and $j \in 1..k$, and observe that for any $t \in \N$ the event $\{\tau_{i,j}=t\}$ is in $\mathcal F_{t-1}$. We denote by 
\[
X_{\tau_i,j}:=\sum_{t \in \mathbb N_+}\chi\{\tau_{i,j}=t\}X_{t,j}
\]
 the pay-off obtained from sampling arm $j$ at random time $\tau_{i,j}$ for any $i\in \mathbb N_+,~j\in 1..k$, where $\chi $ is the indicator function, i.e.  $\chi \{\tau_{i,j}=t\}$ is $1$ for all $\omega \in \Omega$ such that $\tau_{i,j}(\omega) =t$, and is $0$ otherwise. 

The main challenge in devising a policy for the $\varphi$-mixing bandit problem is that, depending on the policy used, the dependence structure of the pay-off sequence $X_{\tau_1,j},~X_{\tau_2,j},\ldots$, for $j \in 1..k$ may be completely different from that of $X_{1,j},~X_{2,j},\ldots$. Note that this differs from the simpler i.i.d. setting where the distribution of the $j$-th arm (with all its characteristics) carries over to that of the sampled sequence. %
This is illustrated in Example~\ref{ex:nonMixingPolicy} below.
\begin{example} \label{ex:nonMixingPolicy}
Consider a two-armed bandit problem where the second arm 
is deterministically set to $0$, i.e. $X_{t,2} = 0,~t \in \N_+$ and the first arm has a 
process distribution described by a two state Markov chain with the initial distribution and stationary distribution over the states both being $(1/2 \enspace 1/2)^\top$ and with
the following transition matrix,
\[T = \begin{pmatrix}
1- \epsilon & \epsilon \\
\epsilon & 1-\epsilon
\end{pmatrix}, \quad \text{with some } \epsilon \in (0,1).
\]
Observe that for this process, if $\epsilon$ is small, with high probability the Markov chain stays in its current state.
Now consider a policy $\pi$, and denote by $\tau_1, \tau_2, \ldots$ the sequence of random times at which $\pi$ samples the first arm according to the following simple rule. Set $\tau_1=1$. For subsequent random times, if $X_{\tau_n,1} = X_{1,1}$ for $n \in \N_+$ then $\tau_{n+1} = \tau_n +1$. Otherwise, $\tau_{n+1}$ is set to be significantly larger than $\tau_n$ to guarantee that the distribution of $X_{\tau_{n+1},1}$ given $X_{\tau_n,1}$ is close to the stationary distribution of the Markov chain, during which time the first arm is sampled. The sequence $X_{\tau_1,1}, X_{\tau_2,1},\dots$ so generated is highly dependent on $X_{1,1}$ and is not $\varphi$-mixing. In fact, the expected pay-offs given the first observation, i.e. $\E(X_{\tau_n,1} | X_{1,1}), ~n \in \N_+$, are very different from the stationary mean $\E X_{1,1}$ if $\epsilon$ is small. In particular, $\E X_{1,1} = 0.5$ while $\E(X_{\tau_n,1} | X_{1,1} =1)$ is  at least $1/(1+2\epsilon) - 1/10$ due to Equation \eqref{eq:stat_of_cond}
on page \pageref{eq:stat_of_cond}
and, hence, for $\epsilon\leq 0.01$ we have that $\E(X_{\tau_n,1} | X_{1,1} =1) \geq 0.8$. 
\end{example} 
A more detailed treatment of the above example is given in Appendix \ref{app:ExamplenonMixingPolicy}.

Indeed, it is a policy's access to the (observed) past data which  can lead to strong couplings between past and future pay-offs in this framework. 
This point has been overlooked in the work of \cite{AUDIF15} which relies on Improved-UCB  \citep{AUER10} to identify the arm with the highest stationary mean, by eliminating potentially sub-optimal arms. The elimination process depends on the data, and the time-steps at which a particular arm is played depend on the remaining arms. Hence, random times and the policy's memory have to be carefully taken into consideration, as the process distribution of the sampled sequence is different from that of the corresponding arm. This notion has not been accounted for in their algorithm, and the confidence intervals involved correspond to the distributions of the arms, and not to those of the sampled sequences, and are therefore invalid in this non-i.i.d. setting. 
\subsection{Approximation Error}\label{sec:approx} 
We start by translating $\varphi$-mixing properties to those of expectations in order to control the difference between what a switching strategy can achieve as compared to the highest stationary mean. Prior to delving into the bandit problem we consider a single bounded real-valued stationary $\varphi$-mixing process $\langle X_t\rangle_{t \in \mathbb{N}_+}$ sampled at random times $\tau_1 < \tau_2< \ldots$, where $\tau_i,~i\in \mathbb{N}_+$, is fully defined by the past observations $X_{\tau_1}, \ldots, X_{\tau_{i-1}}$. 
We control the difference between the mean of the sampled process $\langle X_{\tau_i} \rangle_{i\in \mathbb{N}_+}$ from the stationary mean of the original process. 
The following proposition shows that the difference in the means is controlled by the $\varphi$-mixing coefficients and the increments in the stopping times; the proof is provided in Appendix~\ref{sec:MacroUpperBound}.

\begin{proposition}\label{prop:st-phil} 
Assume that $\langle X_t \rangle_{t \in \mathbb{N}_+}$ is a stationary $\varphi$-mixing process with mixing coefficients $\langle \varphi_i \rangle_{i \in \mathbb{N}_+}$ such that $E X_t=\mu,~t \in \mathbb{N}_+$, and $\sup_{t \in \mathbb{N}_+ }|X_t| \leq c$ for some $c \in [0,\infty)$. Furthermore, let
 $\tau_1, \tau_2, \ldots$ be a sequence of random times such that $\tau_i + \ell \leq \tau_{i+1}$ a.s. for some $\ell \geq 1$ and all $i \in \mathbb{N}_+$, and all $\tau_{i+1}$ are $\sigma(X_{\tau_1}, \ldots, X_{\tau_i})$- measurable with $\tau_1 \in \mathbb{N}_+$ being a fixed time. Then for any $n \in \mathbb{N}_+$ 
\[
 \Bigl|\frac{1}{n} \sum_{i=1}^n E X_{\tau_i} - \mu \Bigr| \leq 2 c  \varphi_\ell.
\]   
\end{proposition}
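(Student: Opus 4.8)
The plan is to bound each individual term $|E X_{\tau_i} - \mu|$ by $2c\varphi_\ell$ and then average. Fix $i \geq 2$ (the case $i=1$ is immediate, since $\tau_1$ is deterministic and $E X_{\tau_1} = \mu$ by stationarity). Since $\tau_i$ takes values in $\N_+$, I would decompose along the events $\{\tau_i = t\}$, writing $X_{\tau_i} = \sum_{t} \chi\{\tau_i = t\} X_t$ and, using $\sum_t P(\tau_i = t) = 1$,
\[
E X_{\tau_i} - \mu = \sum_{t} E\bigl[\chi\{\tau_i = t\}(X_t - \mu)\bigr].
\]
The task then reduces to controlling each summand, which pits the event $\{\tau_i = t\}$ against the single variable $X_t$ with its mean subtracted.

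The crux --- and the step I expect to be the main obstacle --- is a structural claim: for every $t$, the event $\{\tau_i = t\}$ lies in $\sigma(X_{1..t-\ell})$, i.e.\ it is determined by the trajectory up to time $t - \ell$. I would prove this by unwinding the recursive definition of the random times. Writing $\tau_j = g_j(X_{\tau_1},\ldots,X_{\tau_{j-1}})$, one decomposes $\{\tau_i = t\}$ over admissible histories $1 = s_1 < \cdots < s_{i-1}$ and shows by induction that each event $\{\tau_1 = s_1,\ldots,\tau_{i-1}=s_{i-1},\tau_i = t\}$ belongs to $\sigma(X_{s_1},\ldots,X_{s_{i-1}})$. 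The gap hypothesis $\tau_{i-1} + \ell \leq \tau_i$ forces $s_{i-1} \leq t - \ell$ on such an event, so each piece, and hence their countable union, lies in $\sigma(X_{1..t-\ell})$. The delicate point is precisely that $\sigma(X_{\tau_1},\ldots,X_{\tau_{i-1}})$ is generated at \emph{random} times, so one cannot simply invoke a fixed filtration; the gap condition is what guarantees a clean separation of $\ell$ steps between this $\sigma$-algebra and $X_t$.

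With the structural claim in hand, the rest is a direct application of Theorem~\ref{prop:prop1}. Setting $\mathcal G = \sigma(X_{1..t-\ell})$ and $B = \{\tau_i = t\} \in \mathcal G$, the definition of $\varphi_\ell$ (with $u = t-\ell$, gap $\ell$, and $v=1$) gives $\varphi(\mathcal G, \sigma(X_t)) \leq \varphi_\ell$, while stationarity gives $E X_t = \mu$ and $\|X_t\|_\infty \leq c$. Since $B \in \mathcal G$, the defining property of conditional expectation yields $\int_B (X_t - \mu)\,dP = \int_B (E(X_t\mid\mathcal G) - \mu)\,dP$, so by \eqref{eq:prop1:2},
\[
\bigl|E[\chi\{\tau_i=t\}(X_t-\mu)]\bigr| \leq \int_B |E(X_t\mid\mathcal G) - \mu|\,dP \leq 2c\,\varphi_\ell\, P(\tau_i = t).
\]
Summing over $t$ gives $|E X_{\tau_i} - \mu| \leq 2c\varphi_\ell$ for every $i$, and averaging over $i = 1,\ldots,n$ and applying the triangle inequality yields the claimed bound. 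I would double-check only the integrability needed to exchange sum and expectation (immediate from $|X_t| \leq c$) and the validity of the index $t - \ell \geq 1$ (guaranteed since $\tau_i \geq 1 + (i-1)\ell$ for $i \geq 2$).
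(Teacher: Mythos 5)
Your proposal is correct and follows essentially the same route as the paper: the paper's Lemma \ref{lem:simpleProcInFt} establishes the same structural measurability claim by the same induction on the recursive definition of the $\tau_j$ (it places $\{\tau_i=t\}\cap\{\tau_{i-1}=s\}$ in the null-augmented $\sigma(X_{1..s})$ and then uses $\varphi_{t-s}\le\varphi_\ell$ via monotonicity, where you place $\{\tau_i=t\}$ directly in $\sigma(X_{1..t-\ell})$ and use $\varphi_\ell$ in one step), and both arguments then conclude via Theorem~\ref{prop:prop1} exactly as you describe. The one point to tidy up is that the gap condition $\tau_{i-1}+\ell\le\tau_i$ holds only almost surely, so your structural claim is true only modulo null sets; the paper handles this by working with the augmented filtration $\mathcal{E}_t=\sigma(\sigma(X_1,\ldots,X_t)\cup\mathcal{N})$ and extending the $\varphi$-dependence bound to it via Lemma~\ref{lem:addZeroMeas}.
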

In other words, when using the sample mean of the sampled process as an estimate of the stationary mean then the bias of the estimator is bounded through the $\varphi_\ell$-mixing coefficient. This result has further implications, in particular, it is telling us something about the leverage a switching policy has. A switching policy selects effectively random times for each arm at which the arm is played and this result is saying that the summed pay-off it can gather  cannot be more than $2c n\varphi_\ell$ larger than the stationary mean of the arm. Now, a policy is free to play the arms at any time and we only know that $\tau_{i+1} \geq \tau_i +1$ for any random time $\tau_{i}$, i.e. $\ell=1$. This intuition underlies the following proposition.
\begin{proposition}\label{prop:approx}
Consider the jointly stationary $\varphi$-mixing bandit problem formulated in Section~\ref{sec:prob}. Let $\mu_1,\ldots,\mu_k$ be the means of the stationary distributions and let $\mu^*:= \max\{\mu_1,\ldots,\mu_k\}$. Let $\varphi_1$ be the first $\varphi$-mixing coefficient as given by Definition \ref{defn:phi-joint}.
For every $n\geq 1$ we have 
\[
v_n^* - n \mu^* \leq 2 n \varphi_1.
\]
\end{proposition}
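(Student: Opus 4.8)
The plan is to fix an arbitrary policy $\pi \in \Pi$, establish the per-policy bound $\sum_{t=1}^n E X_{t,\pi_t} \leq n\mu^* + 2n\varphi_1$, and then take the supremum over $\pi$ to conclude, since $v_n^*$ is exactly this supremum by \eqref{eq:nu}. The first move is to split the reward at each round according to which arm is actually played: writing $\chi\{\pi_t = j\}$ for the event that the policy selects arm $j$ at time $t$, we have
\[
\sum_{t=1}^n E X_{t,\pi_t} = \sum_{t=1}^n \sum_{j=1}^k E\bigl[\chi\{\pi_t = j\}\, X_{t,j}\bigr],
\]
and the task reduces to comparing each term $E[\chi\{\pi_t=j\}X_{t,j}]$ with its stationary surrogate $\mu_j\, P(\pi_t = j)$.

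For each pair $(t,j)$ I would write $E[\chi\{\pi_t=j\}X_{t,j}] = \mu_j\,P(\pi_t=j) + E[\chi\{\pi_t=j\}(X_{t,j}-\mu_j)]$ and control the fluctuation term. Set $B := \{\pi_t = j\}$ and $\mathcal G := \mathcal F_{t-1}$; because $\pi_t$ is $\mathcal F_{t-1}$-measurable, $B \in \mathcal G$, so the indicator can be pulled inside the conditional expectation via the defining property of $E(\cdot\mid\mathcal G)$, giving $E\bigl[\chi_B (X_{t,j}-\mu_j)\bigr] = E\bigl[\chi_B\bigl(E(X_{t,j}\mid\mathcal G) - \mu_j\bigr)\bigr]$. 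Since the pay-offs lie in $[0,1]$ we have $\|X_{t,j}\|_\infty \leq 1$ and, by stationarity, $E X_{t,j} = \mu_j$; hence \eqref{eq:prop1:2} of Theorem~\ref{prop:prop1} yields
\[
\bigl|E[\chi_B(X_{t,j}-\mu_j)]\bigr| \leq \int_B \bigl|E(X_{t,j}\mid\mathcal G)-\mu_j\bigr|\,dP \leq 2\,P(B)\,\varphi(\mathcal F_{t-1},\sigma(X_{t,j})).
\]

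The crux, and the step I expect to be the main obstacle, is bounding $\varphi(\mathcal F_{t-1},\sigma(X_{t,j}))$ by the \emph{joint} coefficient $\varphi_1$ of Definition~\ref{defn:phi-joint}. This is precisely the point stressed in Section~\ref{sec:rt}: the policy's choice at time $t$ depends on the observed history across \emph{all} arms, so a per-arm coefficient does not suffice. I would argue as follows. The observed filtration satisfies $\mathcal F_{t-1} \subseteq \mathcal G_{t-1}$, the full joint history of Section~\ref{sec:prob}, which coincides up to null sets with $\sigma(X_{1..t-1,\,1..k})$ (and null sets do not affect $\varphi$, as it is defined through probabilities); meanwhile $\sigma(X_{t,j}) \subseteq \sigma(X_{t..t,\,1..k})$. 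Because $\varphi(\mathcal U,\mathcal V)$ is monotone under shrinking either argument (its defining supremum in Definition~\ref{defn:phisig} then ranges over fewer events), it follows that
\[
\varphi(\mathcal F_{t-1},\sigma(X_{t,j})) \leq \varphi\bigl(\sigma(X_{1..t-1,1..k}),\sigma(X_{t..t,1..k})\bigr) \leq \varphi_1,
\]
where the last inequality is Definition~\ref{defn:phi-joint} with $u=t-1$, gap $n=1$, and block length $v=1$. The gap is exactly one step because the decision is $\mathcal F_{t-1}$-measurable while the realized reward sits at time $t$; this is the concrete incarnation of the ``$\ell=1$'' heuristic recorded after Proposition~\ref{prop:st-phil}.

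Putting the pieces together, each term obeys $E[\chi\{\pi_t=j\}X_{t,j}] \leq \mu_j\,P(\pi_t=j) + 2\varphi_1\,P(\pi_t=j)$. Summing over $j$ and using $\sum_j P(\pi_t=j)=1$ together with $\sum_j \mu_j P(\pi_t=j)\leq \mu^*$ gives $\sum_{j} E[\chi\{\pi_t=j\}X_{t,j}] \leq \mu^* + 2\varphi_1$ for each $t$; summing over $t=1,\ldots,n$ and taking the supremum over $\pi$ delivers $v_n^* \leq n\mu^* + 2n\varphi_1$, which is the claim. I would note that this route invokes Theorem~\ref{prop:prop1} directly rather than Proposition~\ref{prop:st-phil}, although the two are morally the same: viewing the times at which arm $j$ is played as random times with consecutive gap at least one, Proposition~\ref{prop:st-phil} (with $c=1$, $\ell=1$) bounds the per-arm bias by $2\varphi_1$, and these aggregate to the same estimate.
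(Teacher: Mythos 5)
Your proof is correct and follows essentially the same route as the paper's: decompose $E X_{t,\pi_t}$ by which arm is played, pull the indicator through the conditional expectation on the past, apply inequality \eqref{eq:prop1:2} of Theorem~\ref{prop:prop1} with the joint coefficient $\varphi_1$, and sum (the paper merely splits further over the play-count $i$ via $\{\tau_{i,j}=t\}$ and conditions on $\mathcal G_{t-1}$ rather than $\mathcal F_{t-1}$). The one spot where you are informal --- asserting that adjoining null sets does not change $\varphi$ --- is exactly what the paper's Lemma~\ref{lem:addZeroMeas} establishes, and in your version it could be avoided altogether by noting directly that $\mathcal F_{t-1}\subseteq\sigma(X_{1..t-1,1..k})$.
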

\begin{proof} 
Consider an arbitrary policy $\langle \pi_t \rangle_{t \in \mathbb{N}_+}$ and an arbitrary $t \in \mathbb{N}_+$ then 
\[
E X_{t,\pi_t} = \sum_{j=1}^k \sum_{i=1}^t E X_{t,j} \times \chi{\{\tau_{i,j} = t\}}. 
\]
Recall the definition of $\mathcal{G}_{t-1}$ in Section \ref{sec:prob} and observe that $\tau_{i,j}$ is $\mathcal{G}_{t-1}$-measurable. Hence, we have with $B=\{ \tau_{i,j} = t\}$ that
\[
E X_{t,j} \times \chi{\{\tau_{i,j} = t\}}  = E E(X_{t,j}|\mathcal{G}_{t-1}) \times \chi{\{\tau_{i,j} = t\}}
= \int_B E(X_{t,j}|\mathcal{G}_{t-1}).
\]
We can extend the $\varphi$-mixing property of the joint process  from $\sigma(X_{11},\ldots,X_{tk})$to $\mathcal{G}_t$ by applying Lemma \ref{lem:addZeroMeas} and we get
\[
\bigl|\int_B (E(X_{t,j}|\mathcal{G}_{t-1}) - E X_{1,j})  \bigr| \leq 2 \varphi_1 P(B).
\]
Since the different sets $\{\tau_{i,j} = t\}, j \in \{1, \ldots, k\}, i\in \{1,\ldots, t\}$  are disjoint 
\begin{align*}
 E X_{t,\pi_t} - \mu^* &\leq \sum_{j=1}^k \sum_{i=1}^t (E X_{t,j} \times \chi{\{\tau_{i,j} = t\}}  
- P(\tau_{i,j} = t) E X_{1,j}) \\ 
&\leq 2\varphi_1 \sum_{j=1}^k \sum_{i=1}^t P(\tau_{i,j} = t) \\
&\leq 2\varphi_1.
\end{align*}
\end{proof}
Observe that this relaxation introduces an inevitable linear component to the regret as shown by Proposition~\ref{prop:approx}. 
However, we argue that if the reward distributions are weakly dependent in the sense that $\varphi_1$ is small, we may settle for the best arm instead of following the best switching strategy.

\subsection{An Optimistic Approach}\label{sec:ucb}
In this section we propose a UCB-type algorithm to identify the arm with the highest stationary mean in a jointly $\varphi$-mixing bandit problem.  
Consider the bandit problem described in Section~\ref{sec:prob}, where we have $k$  arms each with a bounded stationary pay-off sequence such that the joint process is stationary $\varphi$-mixing. Suppose that the processes are weakly dependent in the sense that $\varphi_1\leq \epsilon$ for some small $\epsilon$. 
As discussed in Section~\ref{sec:approx}, in this case a policy to settle for the best arm can serve as a good approximation for the best switching strategy. More specifically, let $$\overline{\mathcal R}_{\pi}(n):=n\mu^*-\sum_{t=1}^n E X_{t,\pi_t}$$ denote the regret of a policy $\pi$ with respect to the arm with the highest stationary mean. 
From Proposition~\ref{prop:approx} we have 
$
\frac{1}{n}(\mathcal R_{\pi}(n)-\overline{\mathcal R}_{\pi}(n)) \leq \epsilon
$
and our objective in this section is to minimize $\overline{\mathcal R}_{\pi}$. 

Recall that in light of the arguments provided in Section~\ref{sec:rt} it is crucial to take a policy's access to past (observed) data into account when devising a strategy for the bandit problem in this framework. 
To address the challenge induced by the inter-dependent reward sequences obtained at random times, our approach relies on the following key observation. 
Suppose we obtain a sequence of $m \in \mathbb N_+$ consecutive samples $X_{\tau,i},X_{\tau+1,i},\dots ,X_{\tau+m,i}$ from arm $i \in 1..k$ starting at a random time $\tau$. 
For a long batch, i.e. large enough $m$, the average expectations $\frac{1}{m}\sum_{j=0}^{m-1} \E X_{\tau+j,i}$ become close to the stationary mean $\mu_i$. More formally we have Lemma~\ref{lemma11} below.
\begin{lemma}\label{lemma11}
For a fixed $i \in 1..k$ and $m \in \N_+$, consider the consecutive samples $X_{\tau,i},X_{\tau+1,i},$ $\dots,X_{\tau+m,i}$, where $\tau: \Omega \rightarrow {\mathbb{N}}_+$ is a random time at which the $i$-th arm is sampled. Let $\mu_i$ denote the stationary mean of arm $i$. 
We have 
$$ \abs{\mu_i-\frac{1}{m}\sum_{j=0}^{m-1} \E X_{\tau+j,i}}\leq \frac{2}{m}\norm{\varphi}.$$
\end{lemma}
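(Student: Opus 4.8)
The plan is to reduce the whole estimate to a single-offset bound of the form $|E X_{\tau+j,i} - \mu_i| \le 2\varphi_{j+1}$ and then average over the offsets $j = 0, \dots, m-1$. The guiding observation is that, although $\tau$ is determined by past observations and the sample $X_{\tau+j,i}$ may therefore be strongly coupled to that past, the sample at offset $j$ always sits a \emph{fixed} gap of $j+1$ time-steps ahead of everything known at time $\tau-1$. Hence its bias relative to the stationary mean is governed by $\varphi_{j+1}$, and these combine telescopically: $\sum_{j=0}^{m-1}\varphi_{j+1} = \sum_{j=1}^m \varphi_j \le \norm{\varphi}$, which after dividing by $m$ gives exactly the claimed $\tfrac{2}{m}\norm{\varphi}$.

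For the per-offset bound I would first expand, for each fixed $j$,
\[
E X_{\tau+j,i} = \sum_{t=1}^\infty E\big[X_{t+j,i}\,\chi\{\tau = t\}\big],
\]
using $X_{\tau+j,i} = \sum_t \chi\{\tau=t\}\, X_{t+j,i}$. Since $\{\tau = t\} \in \mathcal F_{t-1} \subseteq \mathcal G_{t-1}$, each summand rewrites as $\int_{\{\tau=t\}} E(X_{t+j,i}\mid \mathcal G_{t-1})$, exactly as in the proof of Proposition~\ref{prop:approx}. I would then invoke the same $\varphi$-mixing machinery: apply Lemma~\ref{lem:addZeroMeas} to extend the joint $\varphi$-mixing property from $\sigma(X_{1..t-1,1..k})$ to the enlarged filtration $\mathcal G_{t-1}$, and apply \eqref{eq:prop1:2} of Theorem~\ref{prop:prop1} with $B = \{\tau=t\}$, $\mathcal G = \mathcal G_{t-1}$ and $X = X_{t+j,i}$. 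Because $\mathcal G_{t-1}$ records all $k$ arms up to time $t-1$ while $X_{t+j,i}$ lives at time $t+j$, the relevant coefficient is $\varphi(\mathcal G_{t-1},\sigma(X_{t+j,i})) \le \varphi_{j+1}$ (take $u=t-1$, $v=1$, $n=j+1$ in Definition~\ref{defn:phi-joint}, using monotonicity of $\varphi$ in its arguments together with $\|X_{t+j,i}\|_\infty \le 1$). This yields
\[
\big|E[X_{t+j,i}\,\chi\{\tau=t\}] - P(\tau=t)\,\mu_i\big| \le 2\varphi_{j+1}\,P(\tau=t),
\]
and summing over $t$, using $\sum_t P(\tau=t)=1$, gives $|E X_{\tau+j,i} - \mu_i| \le 2\varphi_{j+1}$.

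The final step is purely arithmetic: by the triangle inequality,
\[
\Big|\mu_i - \tfrac1m\sum_{j=0}^{m-1} E X_{\tau+j,i}\Big| \le \tfrac1m\sum_{j=0}^{m-1}\big|\mu_i - E X_{\tau+j,i}\big| \le \tfrac2m\sum_{j=0}^{m-1}\varphi_{j+1} \le \tfrac2m\norm{\varphi}.
\]

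I expect the main obstacle to be bookkeeping rather than a deep difficulty: the delicate point is pinning the correct gap to each offset, namely $j+1$ and not $j$, because $\mathcal G_{t-1}$ stops at time $t-1$, and then legitimately transferring the joint coefficient from the generated $\sigma$-algebra $\sigma(X_{1..t-1,1..k})$ to the full filtration $\mathcal G_{t-1}$ that also contains unobserved pay-offs and null sets, which is precisely what Lemma~\ref{lem:addZeroMeas} supplies. A secondary check is that $\tau$ is almost surely finite, so that $\sum_t P(\tau=t)=1$; otherwise the summation over $t$ would lose mass and the bound would degrade.
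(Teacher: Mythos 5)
Your proposal is correct and follows essentially the same route as the paper's proof: expand over the events $\{\tau=t\}\in\mathcal G_{t-1}$, pull the indicator out of the conditional expectation, apply inequality \eqref{eq:prop1:2} of Theorem~\ref{prop:prop1} with $B=\{\tau=t\}$, identify the gap for offset $j$ as $j+1$, and sum to get $2\norm{\varphi}$; the only organizational difference is that you prove the per-offset bound first and then average, whereas the paper carries the double sum over $t$ and $j$ throughout. Your explicit appeal to Lemma~\ref{lem:addZeroMeas} to pass from $\sigma(X_{1..t-1,1..k})$ to $\mathcal G_{t-1}$, and your remark that $\tau$ must be a.s.\ finite, are points the paper's proof of this lemma leaves implicit, so nothing is missing.
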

\begin{proof}
For simplicity of notation we denote $X_{t,i}$ by $X_t$ and $X_{\tau,i}$ by $X_{\tau}$. 
Recall that $\G_t$ denotes the filtration that keeps track of all the information available up to time $t$ 
(including unobserved pay-offs).
Observe that $\chi\{\tau=t\}$ is $\G_{t-1}$-measurable so that the event $\{\tau=t\}$ is in $\G_{t-1}$ for all $t \in \N_+$. 
As a result, for any $t \in \mathbb N_+$ and $j \in 0..m-1$ we have 
\begin{equation}\label{eq:lemma:measurability}
E (\chi\{\tau=t\}X_{t+j}|\G_{t-1})=\chi\{\tau=t\}E (X_{t+j}|\G_{t-1}),
\end{equation}
see, e.g. \citep{SHIR91}[pp.216]. 
We obtain
\begin{align}
        \left \lvert m\mu_i-\sum_{j=0}^{m-1} \E X_{\tau+j,i}\right \rvert &= \left \lvert \sum_{t\in \N_+ }\sum_{j=0}^{m-1}E (\chi\{\tau=t\}X_{t+j})-E\chi\{\tau=t\}E X_t \right \rvert \label{azal:lemma:eq1} \\
        &= \left \lvert \sum_{t\in \N_+ }\sum_{j=0}^{m-1}EE (\chi\{\tau=t\}X_{t+j}|\G_{t-1})-E\chi\{\tau=t\}E X_t \right \rvert  \label{azal:lemma:eq2} \\
        &= \left \lvert \sum_{t\in \N_+ }\sum_{j=0}^{m-1}E(\chi\{\tau=t\}E (X_{t+j}|\G_{t-1}))-E\chi\{\tau=t\}E X_t \right\rvert \label{azal:lemma:eq3}\\
        &\leq \sum_{t\in \N_+ }\sum_{j=0}^{m-1}E\big (\chi\{\tau=t\} \lvert E (X_{t+j}|\G_{t-1})-E X_t \rvert \big)  \label{azal:lemma:eq4}\\
        &=\sum_{t\in \N_+ }\sum_{j=0}^{m-1}E\big (\chi\{\tau=t\} \lvert E (X_{t+j}|\G_{t-1})-E X_{t+j} \rvert \big)\label{azal:lemma:eq41}\\
        &=\sum_{t\in \N_+ }\sum_{j=0}^{m-1} \int_{\{\tau=t\}} \lvert E (X_{t+j}|\G_{t-1})-E X_{t+j} \rvert \nonumber \\
        &\leq \sum_{t\in \N_+ }\sum_{j=0}^{m-1} 2\varphi(\G_{t-1},\sigma(X_{t+j}))E\chi\{\tau=t\}\label{azal:lemma:eq43}\\
        &\leq \sum_{t\in \N_+ }\sum_{j=1}^{m} 2\varphi_jE\chi\{\tau=t\}\label{azal:lemma:eq5}\\
        &\leq 2\norm{\varphi} \label{azal:lemma:eq6}
\end{align}
where  \eqref{azal:lemma:eq1} and \eqref{azal:lemma:eq2} are due to stationarity and the law of total expectation respectively,  
\eqref{azal:lemma:eq3} follows from \eqref{eq:lemma:measurability}, \eqref{azal:lemma:eq41} follows from stationarity,
\eqref{azal:lemma:eq43} follows from Theorem~\ref{prop:prop1}, namely Inequality \eqref{eq:prop1:2}, and noting that  $\|X\|_\infty = 1$, \eqref{azal:lemma:eq5} follows directly from Definition~\ref{defn:phi-joint}, and \eqref{azal:lemma:eq6} follows from the definition of $\norm{\varphi}$. 
\end{proof}

\begin{algorithm}[t]
 \caption{A UCB-type Algorithm for $\varphi$-mixing bandits.}\label{alg:ucb}
\begin{algorithmic}[1]
\Require{Number $k$ of arms; Sum $\norm{\varphi}:=\sum_{i=1}^\infty \varphi_i$ of mixing coefficients\footnotemark}
\Ensure{\textbf{Initialization:} Play each arm once and update the empirical mean for each arm}
\For{$i = 1..k$}
\State $\overline{X}_i \gets X_{i}$
\State $s_i \gets 1$ \Comment{$s_i,~i=1..k$ denotes the number of times arm $i$ has been {\em selected}}
\EndFor
\Ensure{\textbf{Main Loop:}}
\For{$t=1..\infty$}
   \State \textbf{Arm Selection:} Select the arm that maximizes the following UCB
   $$j \gets \min \left(\argmax_{u \in 1..k} \overline{X}_u+\sqrt{\frac{8 \xi (\frac{1}{8}+\ln t)}{2^{s_u}}}+\frac{\norm{\varphi}}{2^{s_u-1}}\right),~\text{where}~\xi:= 1 + 8\norm{\varphi}$$
   \LineComment{$\triangleright$ The min operator is used to give precedence to the smaller arm-index in the case of a tie.}
    \State \textbf{Update:}  Play arm $j$ for $2^{s_j}$ consecutive iterations and update the empirical mean accordingly.
    \begin{align*}
     &t_j \gets t & t \gets t+2^{s_j} &\qquad \overline{X}_j \gets\frac{1}{2^{s_j}}\sum_{t'=t_j}^{t-1}X_{t',j}
     &s_j \gets s_j+1
    \end{align*}
 \EndFor
 \end{algorithmic}
\end{algorithm}
\footnotetext{Note that only the sum of the coefficients is needed, not the individual coefficients $\varphi_i$. }
Inspired by this result, we provide Algorithm~\ref{alg:ucb} which, given the number $k$ of arms and the sum $\norm{\varphi}$ of the $\varphi$-mixing coefficients, works as follows. 
First, each arm is sampled once for initialization. 
Next, from $t = k+1$ on, arms are played in batches of exponentially growing length. 
Specifically, at each round arm $j$ with the highest upper-confidence on its empirical mean is selected, and played for 
$2^{s_j}$ consecutive time-steps, where $s_j$ denotes the number of times that arm $j$ has been selected so far. 
The upper confidence bound is calculated based on a Hoeffding-type bound for $\varphi$-mixing processes given by Corollary 2.1 of \cite{RIO99}. 
The $2^{s_j}$ samples obtained by playing the selected arm are used in turn to calculate (from scratch) the arm's empirical mean. The algorithm does not require the values of the individual $\varphi$-mixing coefficients, but only their sum $\norm{\varphi}$. In fact, any upper-bound $\vartheta \geq \norm{\varphi}$ may be used, in which case $\vartheta$ would replace $\norm{\varphi}$ in the regret bound of Theorem~\ref{thm:regret_azal}. 

To analyze the regret of Algorithm~\ref{alg:ucb}, first recall that in an i.i.d. setting we trivially have $\overline{\mathcal R}(n) = n\mu^*-\mu_j\sum_{j=1}^k E T_j(n)=\sum_{j=1}^k \Delta_jE T_j(n)$ where $T_j(n)$ is the total number of times that arm $j$ is played by the algorithm in $n$ rounds and $\Delta_j:=\mu^*-\mu_j,~j\in1..k$. In our framework, this equality does not necessarily hold due to the inter-dependencies between the pay-offs. However, as shown in Proposition~\ref{lemma22} below, an analogous result {\em in the form of an  upper-bound} holds for our algorithm. 
\begin{proposition}\label{lemma22}
Consider the regret $\overline{\mathcal R}(n)$ of Algorithm~\ref{alg:ucb} after $n$ rounds of play. We have,
\begin{align*}
 \overline{\mathcal R}(n)  
  \leq \sum_{j=1}^k \Delta_jE T_j(n)+2k\left (\sum_{l=0}^{n}\varphi_{l}\right )\log n 
\end{align*} 
\end{proposition}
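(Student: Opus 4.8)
The plan is to compare, arm by arm, the expected reward actually collected from arm $j$ with the idealized quantity $\mu_j E T_j(n)$, and to absorb the mismatch into the $\log n$ term by applying Lemma~\ref{lemma11} batch by batch. First I would rewrite the regret in a convenient form. Since exactly one arm is played each round, $\sum_{j=1}^k T_j(n)=n$ holds deterministically, so $n\mu^*=\sum_{j=1}^k\mu^* E T_j(n)$. Writing the collected reward as $\sum_{t=1}^n E X_{t,\pi_t}=\sum_{j=1}^k E\big[\sum_{t:\,\pi_t=j}X_{t,j}\big]$ and adding and subtracting $\sum_j\mu_j E T_j(n)$ gives
\[
\overline{\mathcal R}(n)=\sum_{j=1}^k\Delta_j E T_j(n)+\sum_{j=1}^k\Big(\mu_j E T_j(n)-E\Big[\sum_{t:\,\pi_t=j}X_{t,j}\Big]\Big).
\]
It then suffices to bound each summand of the second sum by $2(\log n)\sum_{l=0}^n\varphi_l$.

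Next I would exploit the batch structure of Algorithm~\ref{alg:ucb}. The plays of arm $j$ decompose into the single initialization play together with the main-loop batches of consecutive samples of doubling lengths $2,4,8,\dots$; the $r$-th such batch begins at a random time $\tau$ that is adapted to $\langle\mathcal F_t\rangle$, so $\{\tau=t\}\in\mathcal G_{t-1}$. Running the computation in the proof of Lemma~\ref{lemma11} for this batch — using Theorem~\ref{prop:prop1}, namely \eqref{eq:prop1:2} with $\|X\|_\infty\le1$, and Definition~\ref{defn:phi-joint} exactly as in the steps leading to that lemma — yields, for any batch of length $m\le n$,
\[
\Big|\,E\Big[\sum_{q=0}^{m-1}\big(X_{\tau+q,j}-\mu_j\big)\Big]\Big|\le 2\sum_{i=1}^{m}\varphi_i\le 2\sum_{l=0}^{n}\varphi_l .
\]
Summing this estimate over the batches of arm $j$ bounds the $j$-th summand above, the idealized terms $m\mu_j$ summing in expectation to $\mu_j E T_j(n)$ because the batch lengths of arm $j$ add up to $T_j(n)$. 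Finally I would count the batches: reaching the $r$-th batch of arm $j$ requires at least $2^{r}-1$ earlier plays of that arm, so batches with $2^{r}-1\ge n$ never occur, leaving at most $\log_2(n+1)$ batches of each arm with positive probability. This supplies the $\log n$ factor, and summing over the $k$ arms produces the claimed $2k\big(\sum_{l=0}^n\varphi_l\big)\log n$.

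The main obstacle I anticipate is the bookkeeping around the randomness of the batching: both the number of batches of each arm and their start times are data-dependent, and the final batch before round $n$ is generally truncated. The crux is to confirm that each batch start is a genuine random time with $\{\tau=t\}\in\mathcal G_{t-1}$, so that the measurability identity \eqref{eq:lemma:measurability} underlying Lemma~\ref{lemma11} still applies with the (possibly truncated, and hence $\mathcal G_{t-1}$-measurable on $\{\tau=t\}$) length $m$; allowing $\tau=\infty$ for batches that never start merely deletes terms from the sum over $t$ and cannot increase the bound. Once this is verified, the per-batch estimate is uniform across batches and the doubling schedule converts the total count into the $\log n$ factor; everything else is routine.
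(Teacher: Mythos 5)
Your proposal is correct and follows essentially the same route as the paper's proof: both decompose the collected reward into the doubling batches of each arm started at random times $\tau$ with $\{\tau=t\}\in\mathcal G_{t-1}$, apply the tower property together with Theorem~\ref{prop:prop1} (as in Lemma~\ref{lemma11}) to bound the bias of the $l$-th within-batch sample by $2\varphi_l$, and then multiply by the at most $\log n$ batches per arm and the $k$ arms. The truncation of the final batch that you flag is handled in the paper by capping the inner sum at $\min\{2^m-1,\,n-t\}$, which matches your observation that deleting terms can only help.
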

\begin{proof}
Denote by $\tau_{i,j}: \Omega \rightarrow {\mathbb{N}}_+$ the random time at which the $j$-th arm is sampled for the $i$-th time. Note that for any $t \in \N$ the event $\{\tau_{i,j}=t\}$ is measurable with respect to the filtration $\G_{t-1}$ that keeps track of all the information available up to time $t$. 
First note that
\begin{align}
 E (\chi\{\tau_{i,j}=t\}X_{t+l,j})&=E E (\chi\{\tau_{i,j}=t\}X_{t+l,j}|\G_{t-1}) \nonumber \\
				  &=E\left(\chi\{\tau_{i,j}=t\} E\left(X_{t+l,j}|\G_{t-1}\right)\right) \nonumber\\
				  &\geq \left(\mu_j-2\varphi_l \right) P\left (\tau_{i,j}=t  \right)\label{azal:lemma2:eq1}
\end{align}
where the second equality follows from the fact that the event $\{\tau_{i,j}=t\}$ is $\G_{t-1}$-measurable and \eqref{azal:lemma2:eq1} follows from Theorem~\ref{prop:prop1}. We have,
\begin{align*}
 \overline{\mathcal R}(n) &= n\mu^*-\sum_{t=1}^n E X_{t,\pi_t}\\
	       &= n\mu^*-\sum_{t=1}^n \sum_{j=1}^k\sum_{m=1}^{\log n}\sum_{l=0}^{\min\{2^m-1,n-t\}}E \chi \{\tau_{m,j}=t\}X_{t+l,j}\nonumber \\
	       &\leq n\mu^*-\sum_{t=1}^n \sum_{j=1}^k\sum_{m=1}^{\log n}\sum_{l=0}^{\min\{2^m-1,n-t\}} P\left (\tau_{m,j}=t  \right) \left(\mu_j-2\varphi_l \right)\nonumber\\
	       &\leq n\mu^*-\sum_{t=1}^n \sum_{j=1}^k\sum_{m=1}^{\log n}\sum_{l=0}^{\min\{2^m-1,n-t\}} P\left (\tau_{m,j}=t  \right) \mu_j+ 2k \left (\sum_{l=0}^{n}\varphi_l\right )\log n \nonumber \\
	       & = \sum_{j=1}^k \Delta_j E T_j(n)+2k \left (\sum_{l=0}^{n}\varphi_l\right )\log n
\end{align*}
where the last inequality follows from \eqref{azal:lemma2:eq1}.

\end{proof}
An upper-bound on $\overline{\mathcal R}(n)$ is given by Theorem~\ref{thm:regret_azal} below with proof provided in Appendix~\ref{app:thm:regret_azal}. Recall that $\Delta_i:=\mu^*-\mu_i$ is the difference between the highest stationary mean $\mu^*$ and the stationary mean $\mu_i$ of arm $i$.
\begin{theorem}[Regret Bound.]\label{thm:regret_azal}
For the regret $\overline{\mathcal R}(n)$ of Algorithm~\ref{alg:ucb} after $n$ rounds of play. We have,
\begin{align*}
 \overline{\mathcal R}(n)&\leq \sum_{\substack{i=1\\ \mu_i \neq \mu^*}}^k\frac{32(1+8\norm{\varphi})\ln n}{\Delta_i}+(1+2\pi^2/3)(\sum_{i=1}^k\Delta_i)+\norm{\varphi}\log n
\end{align*}
\end{theorem}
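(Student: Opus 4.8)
The plan is to follow the classical UCB regret decomposition of \cite{AUER02}, adapted to the batched, dependent setting. By Proposition~\ref{lemma22} the regret is already split into $\sum_{j=1}^k \Delta_j E T_j(n)$ plus an additive logarithmic mixing term, so the whole problem reduces to controlling the expected number of plays $E T_j(n)$ of each suboptimal arm $j$ (those with $\Delta_j > 0$); the residual additive term is then carried through unchanged to produce the $\norm{\varphi}\log n$ contribution.

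The core is to show that, with high probability, the radius used by the algorithm is a valid two-sided deviation bound, i.e. that $\mu_j \in [\overline X_j - c_{t,s},\, \overline X_j + c_{t,s}]$ with $c_{t,s} := \sqrt{8\xi(\tfrac18 + \ln t)/2^{s}} + \norm{\varphi}/2^{s-1}$ whenever arm $j$ has been selected $s$ times at round $t$. I would split $|\overline X_j - \mu_j|$ into a bias part and a fluctuation part. The bias part $|E\overline X_j - \mu_j|$ is controlled by Lemma~\ref{lemma11}, which yields exactly the deterministic correction $\norm{\varphi}/2^{s-1}$ appearing in the UCB. For the fluctuation part I would invoke the Hoeffding-type inequality for $\varphi$-mixing processes (Corollary~2.1 of \cite{RIO99}); the variance-proxy inflation $\xi = 1 + 8\norm{\varphi}$ is precisely what that inequality contributes under dependence, and it collapses to the i.i.d. constant when $\norm{\varphi}=0$.

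The main obstacle is that $\overline X_j$ averages over a block $X_{\tau,j},\dots,X_{\tau+2^{s}-1,j}$ whose \emph{starting time $\tau$ is random} and adapted to the past, so Rio's inequality---stated for a fixed contiguous block---cannot be applied directly; this is exactly the pathology of Example~\ref{ex:nonMixingPolicy} and Section~\ref{sec:rt}. I would handle it by conditioning on $\{\tau = t\}$, which is $\mathcal G_{t-1}$-measurable, applying the concentration bound to the now-fixed block $X_{t,j},\dots,X_{t+2^{s}-1,j}$, and using the $\varphi$-mixing property through Theorem~\ref{prop:prop1} to bound the effect of this conditioning (as in the proof of Lemma~\ref{lemma11}), before summing over $t$ against $P(\tau=t)$. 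This is the step that the machinery of Appendix~\ref{sec:tech} is built to support, and forcing the dependence on $\norm{\varphi}$ to enter only through $\xi$ and the bias correction is the delicate part.

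Finally, I would assemble the count. On the good events a suboptimal arm $j$ is selected at a round with counter $s$ only if $c_{t,s} \ge \Delta_j/2$, which caps the largest admissible batch index; summing the geometric batch sizes $2^{s}$ up to this cap bounds the plays accumulated before the two confidence intervals separate, and multiplying by $\Delta_j$ produces the leading term $32(1+8\norm{\varphi})\ln n/\Delta_i$. The complementary contribution comes from rounds on which a good event fails: with the chosen $\ln t$ scaling and the geometric batch length these failure probabilities decay like $t^{-2}$, so summing them over the two tail directions and the optimal/suboptimal pair yields the constant $(1+2\pi^2/3)\sum_i \Delta_i$ via $\sum_t t^{-2}=\pi^2/6$. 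Combining the two contributions with the additive mixing term of Proposition~\ref{lemma22} gives the stated bound.
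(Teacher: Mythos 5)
Your overall architecture matches the paper's: reduce to bounding $E T_i(n)$ via Proposition~\ref{lemma22}, control the bias of the batch average by Lemma~\ref{lemma11} (giving the $\norm{\varphi}/2^{s-1}$ correction), apply Rio's Hoeffding-type inequality for the fluctuation, and then run the Auer-style count with the cap $c_{t,s}\geq\Delta_i/2$ giving the $32(1+8\norm{\varphi})\ln n/\Delta_i$ term and the tail sum giving $(1+2\pi^2/3)\sum_i\Delta_i$. That part is fine.

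The genuine gap is in your mechanism for handling the random starting time of a batch. You propose to condition on $\{\tau=t\}$, apply the concentration bound to the fixed block $X_{t,j},\ldots,X_{t+2^s-1,j}$, and ``bound the effect of this conditioning'' via Theorem~\ref{prop:prop1} before summing against $P(\tau=t)$. For the \emph{bias} this works (and is exactly the proof of Lemma~\ref{lemma11}), because the error at lag $j$ inside the block is $O(\varphi_j)$ and the $\varphi_j$ are summable over the block. For a \emph{tail probability} it does not: $\varphi$-dependence only gives the additive bound $|P(A_t\mid\tau=t)-P(A_t)|\leq\varphi_1$ with $A_t$ the deviation event of the block starting at $t$, so summing over $t$ yields $P(A)\leq\sup_t P(A_t)+\varphi_1$. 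The extra $\varphi_1$ is a constant that does not decay in $t$ or in the batch length, so the resulting per-event bound $t^{-4}+\varphi_1$ is not summable against the weights $2^{m+1}$ in the triple sum, and the constant term of the theorem cannot be recovered. The paper avoids this entirely with Lemma~\ref{lem:stopped_phi}: it shows that the block process \emph{started at the random time} is itself $\varphi$-mixing, with coefficients at most $2\varphi_l$ between the pre- and post-gap $\sigma$-algebras defined relative to $\tau$. Rio's inequality is then applied directly to that stopped process (around its own mean, whose distance to $\mu_j$ is what Lemma~\ref{lemma11} controls), so the only price of the random start is a constant-factor inflation of $\norm{\varphi}$ inside $\xi$, not an additive, non-vanishing term in the tail probability. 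You need that lemma (or an equivalent statement) to make your step go through.
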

\paragraph{Remark.} Interestingly, $\norm{\varphi}$ appears in the bound of Theorem~\ref{thm:regret_azal}. Indeed, in the case where the pay-offs are i.i.d., we recover (up to some constant) the regret bound of \cite{AUER02}[Thm. 1]. 
\subsection{Strongly Dependent Reward Distributions: a Complementary Example }\label{sec:gauss}
At the other end of the extreme lie bandit problems with strongly dependent pay-off distributions. 
Our objective in this section is to give an example where a simple switching strategy can be obtained in this case to leverage the strong inter-dependencies between the samples. This approach gives a much higher overall pay-off than what would be given by 
settling for the best arm, and is computationally efficient. 
The intuition is that in many cases strong dependencies may allow for the prediction of future rewards even from scarce observations of a sample path.

We consider a class of stochastic  processes for which we can easily control the level of dependency. A natural choice is to use stationary Gaussian processes on $\mathbb{N}_+$. Recall that a Gaussian process is fully specified by its 
covariance function $\cov \!\!: \mathbb{N}_+ \times \mathbb{N}_+ \rightarrow \mathbb{R}$. Also, for any covariance function  Kolmogorov's consistency theorem guarantees the existence of a Gaussian process with this particular covariance function, see, e.g. \citep{GINE16}.
A Gaussian process is stationary if it has constant mean on $\mathbb{N}_+$ and its covariance can be written as  $\cov(s) = \cov(t,t+s)$ for all $t \in \mathbb{N}_+, s \in \mathbb{N}$.   We measure the degree of dependence of the process by means of H\"older-continuity of the covariance function. In particular,  we assume that there exists  some $c > 0$ and $\alpha \in (0,1]$ such that for all $s,t \in \mathbb{N}$, $|\cov(s) - \cov(t)| \leq c|s-t|^\alpha$ and we assume that the covariance function is non-negative. A low $c$ and $\alpha$ correspond to highly dependent processes since the covariance decreases slowly over time. A slowly decreasing covariance also implies large $\varphi$-mixing coefficients, since 
$
\abs{\cov(t)}/\cov^2(0)  \leq  2(\varphi(X_0,X_t) \varphi(X_t,X_0))^{1/2},
$
by \cite{RIO99}[Thm. 1.4]. 
Consider a $k$-armed bandit problem where each arm is distributed according to a stationary Gaussian process with stationary mean $\mu_i,~i=1..k$. For simplicity, we assume that the processes are mutually independent with the same, {\em unknown}, covariance function $\cov(\cdot)$. While $\cov(\cdot)$ is assumed unknown, we have access to an upper-bound on its rate of decay. That is, we are given constants $c$ and $\alpha$ such that $\cov(\cdot)$ is $\alpha,c$-H\"older continuous. 
We further assume that an upper bound on the stationary means of the processes is known and that $\cov(0) = 1$.
In order to obtain direct control on the regret $\mathcal{R}_{\pi}(n)$ we would need to make inference about the best possible switching strategy. Instead, we provide guarantees for the regret $\mathcal{R}_{\pi}^+(n)$ of policy $\pi$ with respect to the best policy that can choose arms in hindsight, i.e. 
$\mathcal{R}_{\pi}^+(n)  := \sum_{t=1}^n E( \max_{i \leq k} (X_{t,i} - X_{t,\pi_t})) $. Note that  $\mathcal{R}_{\pi}^+(n) \geq \mathcal R_{\pi}(n)$.
\begin{algorithm}[t] 
 \caption{An Algorithm for highly dependent arms.}\label{alg:gp}
 \begin{algorithmic}[1]
 \Require{a  bound $\Delta$ on the difference between the stationary means, $\max_{i,j\leq k} \abs{\mu_i - \mu_j} \leq \Delta$;  H\"older coefficients $\alpha,c$ such that $\abs{1 - \cov(t)} \leq c t^\alpha$ for all $t\geq 0$.}
   \State 
    \begin{equation*}\label{mstar}m^\star \gets 
\left\lceil \left(\frac{\sqrt{\pi}(\Delta+  \sqrt{2})}{2\alpha c^{3/2}}\right)^{\frac{1}{1+\alpha}}  \right\rceil.    
   \end{equation*}
\If {$m^\star  \leq k$}
$m^\star \gets k+1$
\EndIf
  \If {$\Delta  < \sqrt{8 (m^*)^\alpha}/(\sqrt{2c} (m^\star - k)^\alpha + k^\alpha))$} 
  \State
  \[m^\star \gets \min \argmax_{m \in \mathbb{N}} \{ m :  \Delta  \geq \sqrt{8 m^\alpha}/(\sqrt{2c} (m - k)^\alpha + k^\alpha))\}.
  \] 
  \EndIf
  \For {$l=0\dots \infty$}
\State \textbf{Phase I:} 
observe pay-offs $X_{l m^\star+1,1} = x_1, \ldots, X_{lm^\star+k,k} = x_k$ 
\[i^* \gets \min \argmax_{i \leq k} \{x_1, \ldots, x_k\}\]
\State \textbf{Phase II:} play arm $i^*$ for $m^\star-k$ steps. 
\EndFor
 \end{algorithmic}
\end{algorithm}

We provide a simple algorithm, namely, Algorithm \ref{alg:gp}, that exploits the dependence between the pay-offs. Starting from an exploration phase, the algorithm alternates between exploration and exploitation, denoted Phase I and Phase II respectively. 
In  Phase I it sweeps through all $k$ arms to observe the corresponding pay-offs. In Phase II it plays the arm with the highest observed pay-off for $m-k$ rounds, where $m$ is  a (large) constant given by \eqref{mstar} which reflects the degree of dependence between the samples in the processes. 
We need not estimate the stationary distributions in this algorithm, as bounds on the differences between the stationary means suffice. Indeed, these differences are of minor relevance unless they are high as compared to the dependence between the individual processes. 
We have the following regret bound whose proof is given in Appendix \ref{sec:UpperBoundGP}. 
\begin{proposition}\label{steffen:GP}
Given  $\Delta$ such that $\max_{i,j} |\mu_i -\mu_j| \leq \Delta$ and $\alpha,c$ such that $|1-\cov(t) | \leq c t^\alpha$ for all $t \geq 0$, the regret $\mathcal{R}^+(n)$ of  Algorithm \ref{alg:gp} after $n$ rounds is at most,
\[
  (n + m^\star) k(k-1) \left(\frac{\Delta + \sqrt{2}}{m^\star} +   
\frac{a_{m^\star} c^{1/2}}{8\pi (1-b_{m^\star})} 
\left(2\pi^{1/2}- (1-\Delta\sqrt{b_{m^\star}/4}) 
\exp\left( - \frac{\Delta^2 b_{m^\star}}{4}  \right) 
\right)\right),
\]
with $a_{m^\star} = 8c(m^\star)^\alpha, b_{m^\star} = c((m^\star - k)^\alpha + k^\alpha)$. 
\end{proposition}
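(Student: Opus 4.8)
The plan is to bound the regret block by block and exploit stationarity. Algorithm~\ref{alg:gp} partitions time into consecutive blocks of length $m^\star$, and within block $l$ the arm played depends only on the $k$ pay-offs observed during Phase~I of that same block. Hence, by stationarity of the joint Gaussian process, every block contributes the same expected regret, say $R_{\mathrm{block}}$, and since the $n$ rounds are covered by at most $(n+m^\star)/m^\star$ blocks I would first reduce to
\[
\mathcal{R}^+(n)\leq \frac{n+m^\star}{m^\star}\,R_{\mathrm{block}}.
\]
It then suffices to show $R_{\mathrm{block}}\leq m^\star k(k-1)\,[\,\cdots\,]$, splitting $R_{\mathrm{block}}$ into the contribution of Phase~I (the $k$ exploration steps) and Phase~II (the $m^\star-k$ exploitation steps). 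Throughout I would use the elementary pointwise inequality $\max_{i}X_{t,i}-X_{t,\pi_t}\leq\sum_{i\neq\pi_t}(X_{t,i}-X_{t,\pi_t})^+$, reducing everything to per-pair comparisons and explaining the combinatorial factor $k(k-1)$.

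For Phase~I, at each of the $k$ exploration steps a fixed arm is played, so for each competing arm $i$ the term $(X_{t,i}-X_{t,\pi_t})^+$ is the positive part of a Gaussian with mean $\mu_i-\mu_{\pi_t}$, $|\mu_i-\mu_{\pi_t}|\leq\Delta$, and variance $2$ (the two arms being independent, each of unit variance). Bounding $E[(\,\cdot\,)^+]$ by $\Delta+\sqrt2$ and summing over the $k-1$ competitors and the $k$ Phase~I steps yields the $k(k-1)(\Delta+\sqrt2)$ contribution, which matches the first term $\tfrac{\Delta+\sqrt2}{m^\star}$ after combining with the block count.

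The crux is Phase~II. Here the algorithm plays $i^\star$, the arm with the largest Phase~I observation, so writing $Y_r$ for the observed value of arm $r$ and $Z_r=X_{t,r}$ for its value at a Phase~II step $t$, I would bound
\[
E\!\left(\max_i X_{t,i}-X_{t,i^\star}\right)\leq \sum_{j}\sum_{i\neq j}E\!\left(\chi\{i^\star=j\}(Z_i-Z_j)^+\right)\leq\sum_{j}\sum_{i\neq j}E\!\left(\chi\{Y_j\geq Y_i\}(Z_i-Z_j)^+\right),
\]
using $\{i^\star=j\}\subseteq\{Y_j\geq Y_i\}$ and nonnegativity of the integrand. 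For a fixed pair, set $A=Y_i-Y_j$ and $B=Z_i-Z_j$: these form a bivariate Gaussian with common mean $\delta=\mu_i-\mu_j$, variances $2$, and covariance $\cov(g_i)+\cov(g_j)$, where $g_i,g_j\leq m^\star$ are the gaps between observation and replay. The Hölder hypothesis $|1-\cov(g)|\leq c g^\alpha$ turns the gaps into the increment-variance bound $\mathrm{Var}(B-A)\leq 2c(g_i^\alpha+g_j^\alpha)$, and this is precisely where $a_{m^\star}=8c(m^\star)^\alpha$ and $b_{m^\star}=c((m^\star-k)^\alpha+k^\alpha)$ enter. Evaluating the two-dimensional integral $E[\chi\{A\leq0\}B^+]$ in closed form — producing the Gaussian-density and error-function terms that assemble into $2\pi^{1/2}-(1-\Delta\sqrt{b_{m^\star}/4})\exp(-\Delta^2 b_{m^\star}/4)$ — and then summing over the at most $m^\star$ Phase~II steps and the $k(k-1)$ pairs gives the second term.

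I expect the main obstacle to be exactly this Gaussian computation. The difficulty is that the selection event $\{A\leq0\}$ and the regret $B^+$ are driven by correlated Gaussians (indeed $A$ and the increment $B-A$ are negatively correlated), so the expectation does not factorize; a crude bound such as $\chi\{A\leq0\}B^+\leq(B-A)^+$ gives only $O(\sqrt{c})$ rather than the sharper $O(c^{3/2})$ scaling sought, so one must carry out the integral while retaining both the dependence on $\delta$ (to keep the $\Delta$-dependent, exponentially decaying factor) and on the gaps (to recover $a_{m^\star},b_{m^\star}$). A secondary subtlety is ensuring $b_{m^\star}<1$, so that the factor $(1-b_{m^\star})^{-1}$ is meaningful and positive — this is precisely what the case distinctions in the definition of $m^\star$ in Algorithm~\ref{alg:gp} are designed to guarantee.
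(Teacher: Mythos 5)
Your plan follows essentially the same route as the paper's proof: the block reduction via stationarity, the pairwise union bound giving the $k(k-1)$ factor, the Phase~I bound $E(X-Y)^+\leq \Delta+\sqrt{2}$, the relaxation of the selection event $\{i^\star=j\}$ to $\{Y_j\geq Y_i\}$, the conditioning on the Phase~I observations with the Gaussian-process posterior mean and variance controlled through the H\"older assumption, and the final one-dimensional Gaussian integral over $\tilde\Delta=Y_i-Y_j$. You also correctly identify the key point that the selection event must be retained (rather than discarded via $\chi\{A\leq 0\}B^+\leq(B-A)^+$) to obtain the stated scaling, which is exactly how the paper's Appendix~\ref{sec:integration} proceeds.
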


To interpret the bound consider for simplicity $\alpha = 1$ and the case of a highly dependent process and, hence, a very small $c$. 
If we choose to play the arm with the highest stationary mean at all rounds, then  standard bounds on the normal distribution give us a bound of order
$n \exp(-(\Delta^2/4))$ on the regret. In this case, the regret of Algorithm \ref{alg:gp} is of order 
\[
 nk^2 c^{3/4}
\]
because $b_{m^\star} = c m^\star \approx c^{1/4} \pi^{1/4} ((\Delta + \sqrt{2})/2)^{1/2}$ is insignificant for small $c$, $c^{1/2} a_{m^\star} =  8 c^{3/2} m^\star \approx 8 c^{3/4} \pi^{1/4} ((\Delta + \sqrt{2})/2)^{1/2}$ and the bracket on the right side is about $2\pi^{1/2} - 1$. The gap $\Delta$ itself is not of  high importance because in Phase I the algorithm selects the arm with highest current pay-off and the value stays stable over a long period as $c$ is small. Both regret bounds are linear in $n$ because the oracle has a significant advantage in this setting: at any given time $t$ the oracle chooses the arm with the highest pay-off in hindsight.  However, for a moderate number of arms $k^2 c^{3/4}$ is significantly smaller than $\exp(-\Delta^2/4)$ unless $\Delta$ is considerably large. The advantage of the switching algorithm  vanishes if $k$ is large compared to the smoothness of the process, because eventually the exploration phase (Phase I) will dominate and the smoothness of the arms cannot be exploited by this algorithm. This example demonstrates that large dependence in the stochastic process can be exploited to build switching algorithms that have a significant edge over algorithms that aim to select a single arm and algorithms like Algorithm \ref{alg:ucb} are outperformed  by simple switching algorithms.

\section{Outlook}\label{sec:conc}
This paper is an initial attempt to characterize special sub-classes of the restless bandit problem where good approximate solutions can be found using simple, computationally tractable approaches. 
We provide a UCB-type algorithm to approximate the optimal strategy in the case where the pay-offs are jointly stationary $\varphi$-mixing and are only weakly dependent. A natural open problem here is the derivation of a lower-bound. Moreover, while our algorithm only requires knowledge of the sum of the $\varphi$-mixing coefficients $\norm{\varphi}=\sum_i \varphi_i$ as opposed to that 
of each individual $\varphi_i$, the online estimation of the mixing coefficients can prove useful. Specifically, in light of Proposition~\ref{prop:approx}, if $\varphi_1$ is estimated from data, the algorithm can have a real-time estimate of its maximum loss with respect to the best switching strategy after $n$ rounds of play. Further, the results can be strengthened if the algorithm can adaptively estimate $\norm{\varphi}$ instead of relying on it as input.  Another interesting regime corresponds to strongly dependent pay-off distributions. 
We provide an example using stationary Gaussian Processes where a simple switching strategy can leverage the dependencies  to outperform a best arm policy. 
An open problem would be to weaken the assumptions on the process distributions and obtain results analogous to the weakly dependent case for the strongly dependent framework. 
\section*{Acknowledgments.}
We are grateful to the anonymous reviewers whose comments have greatly contributed to the readability and organization of the paper.

\appendix
\newtheorem{Alemma}{Lemma}[section]

\section{Proofs for $\varphi$-mixing Processes}
\subsection{Technical Results} \label{sec:tech}
\label{app:variousLemma}
The following key lemma allows us to control $\varphi$-mixing coefficients corresponding to disjoint events. 
\begin{Alemma} \label{lem:phi_mixing_disjoint}
Let $(\Omega,\mathcal{A},P)$ be a probability space and let $\mathcal{B},\mathcal{C}$ be two $\sigma$-subalgebras of $\mathcal{A}$. If there exists a $\varphi\geq 0$ such that for all $B\in \mathcal{B}$ and $ C \in \mathcal{C}$ it holds that $\abs{P(B) P(C) - P(B\cap C)} \leq\varphi  P(C)$ then for any disjoint sequence $\langle B_n\rangle_{n\in\mathbb{N}}, B_n \in \mathcal{B}$ for all $n\in\mathbb{N}$, and any $C\in\mathcal{C}$, we have   
\[
\sum_{n=0}^\infty \abs{P(B_n)P(C) - P(B_n \cap C)} \leq 2 \varphi P(C). 
\]
\end{Alemma}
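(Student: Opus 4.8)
The plan is to exploit the disjointness of $\langle B_n\rangle$ together with the countable additivity of $P$ so that the signed summands collapse into a single difference of the form already controlled by the hypothesis. The naive route of bounding each $\abs{P(B_n)P(C) - P(B_n\cap C)}$ by $\varphi P(C)$ and summing is hopeless, since over infinitely many indices it yields a vacuous bound; the real content of the lemma is that the terms $P(B_n)P(C) - P(B_n\cap C)$ cannot all reinforce one another in the same direction without contradicting the hypothesis applied to their union.

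First I would split the index set $\mathbb{N}$ by the sign of the summand, writing $I^+ := \{n : P(B_n)P(C) \geq P(B_n\cap C)\}$ and $I^- := \mathbb{N}\setminus I^+$, so that $P(B_n)P(C) - P(B_n\cap C)$ is nonnegative on $I^+$ and negative on $I^-$. The point of this split is that on each piece the absolute value can be pulled outside the sum: $\sum_{n\in I^+}\abs{P(B_n)P(C) - P(B_n\cap C)} = \sum_{n\in I^+}\bigl(P(B_n)P(C) - P(B_n\cap C)\bigr)$, and symmetrically for $I^-$ with an overall sign.

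Next I would set $B^+ := \bigcup_{n\in I^+}B_n$, which lies in $\mathcal{B}$ as a countable union of its members. By disjointness and countable additivity, $P(B^+) = \sum_{n\in I^+}P(B_n)$ and $P(B^+\cap C) = \sum_{n\in I^+}P(B_n\cap C)$, so the nonnegative sum over $I^+$ equals exactly $P(B^+)P(C) - P(B^+\cap C)$, and the hypothesis applied to the pair $(B^+, C)$ bounds this by $\varphi P(C)$. Running the identical argument for $B^- := \bigcup_{n\in I^-}B_n \in \mathcal{B}$ bounds $\sum_{n\in I^-}\abs{P(B_n)P(C) - P(B_n\cap C)}$ by $\varphi P(C)$ as well. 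Adding the two contributions gives the claimed bound
\[
\sum_{n=0}^\infty \abs{P(B_n)P(C) - P(B_n\cap C)} \leq 2\varphi P(C).
\]

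The only genuinely nontrivial step is the sign-splitting idea that turns the absolute value of a signed sum into a sum of absolute values on each piece, which is precisely what lets a single application of the hypothesis (to the union $B^+$, respectively $B^-$) absorb an entire infinite family of terms; the measure-theoretic bookkeeping that each union stays in $\mathcal{B}$ and that additivity distributes over the disjoint pieces is then routine.
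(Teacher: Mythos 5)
Your proof is correct and is essentially identical to the paper's argument: both split the index set by the sign of $P(B_n)P(C)-P(B_n\cap C)$, apply the hypothesis once to each of the two disjoint unions $\bigcup_{n\in I^+}B_n$ and $\bigcup_{n\in I^-}B_n$, and use countable additivity to identify each signed partial sum with a single difference. No gaps.
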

\begin{proof}
Let $c_n = P(B_n)P(C) - P(B_n \cap C)$, $I_+ = \{n \in \mathbb{N}: c_n \geq 0 \}, I_- =  \{n \in \mathbb{N}: c_n < 0 \}$. Now, since $\bigcup_{n \in I_+} B_n \in \mathcal{B}$ and  $\bigcup_{n \in I_-} B_n \in \mathcal{B}$ we have
\begin{align*}
P(C) \varphi &\geq P\Bigl(\bigcup_{n \in I_+} B_n\Bigr) P(C) - P\Bigl(\Bigl(\bigcup_{n \in I_+} B_n\Bigr) \cap C\Bigr)\\
&= \sum_{n \in I_+} \left(P(B_n)P(C) - P(B_n \cap C) \right) = \sum_{n \in I_+} \abs{P(B_n)P(C) - P(B_n \cap C)}.
\end{align*}
Similarly,
$P(C) \varphi \geq \sum_{n \in I_-} \abs{P(B_n)P(C) - P(B_n \cap C)}$.
Hence 
\begin{align*}
2 P(C) \varphi &\geq \!\! \sum_{n \in I_+} \abs{P(B_n)P(C) - P(B_n \cap C)} + \!\!\sum_{n \in I_-} \abs{P(B_n)P(C) - P(B_n \cap C)} \\
&= \sum_{n\in\mathbb{N}} \abs{P(B_n)P(C) - P(B_n \cap C)}. 
\end{align*} 
\end{proof}
\newpage
\noindent The following technical lemma is used in the proof of Proposition \ref{prop:approx}.
\begin{Alemma} \label{lem:addZeroMeas}
Given some probability space $(\Omega,\mathcal{A},P)$,  three $\sigma$-subalgebras $\mathcal{B}, \mathcal{C}, \mathcal{D} \subset \mathcal{A}$ and $\varphi >0$ such that $|P(U)P(V) - P(U\cap V)| \leq \varphi P(V)$ for all $U \in \mathcal{B}, V \in \mathcal{C}$ and $P(A) = 0$ for all $A \in \mathcal{D}$ then it holds that $|P(U)P(V) - P(U\cap V)| \leq \varphi P(V)$ for all $U \in \mathcal{B}, V \in \sigma(\mathcal{C} \cup \mathcal{D})$.
\end{Alemma}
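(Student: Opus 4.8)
The plan is to reduce the statement to the standard measure-theoretic fact that augmenting a $\sigma$-algebra by null sets enlarges it only up to sets of measure zero. Write $\mathcal{N}$ for the collection of all $P$-null sets in $\mathcal{A}$, i.e. $\mathcal{N} = \{A \in \mathcal{A}: P(A)=0\}$. Since every $A \in \mathcal{D}$ satisfies $P(A)=0$, we have $\mathcal{D} \subseteq \mathcal{N}$, and hence $\sigma(\mathcal{C}\cup\mathcal{D}) \subseteq \sigma(\mathcal{C}\cup\mathcal{N})$. It therefore suffices to establish the claimed inequality for every $V \in \sigma(\mathcal{C}\cup\mathcal{N})$. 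The central object is the family
\[
\mathcal{C}' := \{\, V \in \mathcal{A} : \exists\, C \in \mathcal{C} \text{ with } P(V \triangle C) = 0 \,\},
\]
where $V \triangle C := (V\setminus C)\cup(C\setminus V)$ denotes symmetric difference, and the first key step is to prove $\mathcal{C}' = \sigma(\mathcal{C}\cup\mathcal{N})$.

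First I would verify that $\mathcal{C}'$ is a $\sigma$-algebra containing $\mathcal{C}\cup\mathcal{N}$: it contains $\mathcal{C}$ (take $C=V$) and $\mathcal{N}$ (take $C=\emptyset$); it is closed under complementation since $V^c \triangle C^c = V \triangle C$; and it is closed under countable unions because $(\bigcup_n V_n)\triangle(\bigcup_n C_n) \subseteq \bigcup_n (V_n \triangle C_n)$, and a countable union of null sets is null, so the left-hand side is an $\mathcal{A}$-measurable set of measure zero. This yields $\sigma(\mathcal{C}\cup\mathcal{N}) \subseteq \mathcal{C}'$. For the reverse inclusion, any $V \in \mathcal{C}'$ can be written $V = C \triangle (V \triangle C)$ with $C \in \mathcal{C}$ and $V \triangle C \in \mathcal{N}$, whence $V \in \sigma(\mathcal{C}\cup\mathcal{N})$. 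Note that no completeness hypothesis is needed, as every set appearing here lies in $\mathcal{A}$ and monotonicity of $P$ controls the relevant measures.

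With this representation in hand, the conclusion follows quickly. Fix $U \in \mathcal{B}$ and $V \in \sigma(\mathcal{C}\cup\mathcal{N}) = \mathcal{C}'$, and choose $C \in \mathcal{C}$ with $P(V \triangle C)=0$. Since $V$ and $C$ differ by a null set, $P(V) = P(C)$; and since $(U\cap V)\triangle(U\cap C) = U\cap(V\triangle C) \subseteq V\triangle C$ is null, also $P(U\cap V) = P(U\cap C)$. Substituting these two equalities and applying the hypothesis to the pair $(U,C)$ with $C\in\mathcal{C}$ gives
\[
|P(U)P(V) - P(U\cap V)| = |P(U)P(C) - P(U\cap C)| \leq \varphi\, P(C) = \varphi\, P(V),
\]
which is exactly the asserted bound.

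The only point that requires genuine care is the transport of the mixing bound across the null-set perturbation, i.e. the equality $P(U\cap V) = P(U\cap C)$: one must observe that $U\cap V$ and $U\cap C$ agree outside the null set $V\triangle C$, which is what allows an inequality a priori valid only for sets in $\mathcal{C}$ to hold verbatim for their modifications in $\sigma(\mathcal{C}\cup\mathcal{D})$. The verification that $\mathcal{C}'$ is a $\sigma$-algebra and the accompanying symmetric-difference identities are routine.
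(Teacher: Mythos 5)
Your proof is correct, and it takes a genuinely more elementary route than the paper's. The paper works with the family $\mathcal{E}$ of sets $A$ for which there is a $B \in \mathcal{C}$ with $P(A\cap C)=P(B\cap C)$ for all $C\in\mathcal{A}$, shows $\mathcal{E}$ is a Dynkin system (the closure under disjoint countable unions requires a somewhat delicate disjointification of the approximating $\mathcal{C}$-sets), verifies that $\mathcal{C}\cup\mathcal{D}\cup\{A\cap B: A\in\mathcal{C}, B\in\mathcal{D}\}$ is a $\pi$-system contained in $\mathcal{E}$, and invokes the monotone class theorem. Your family $\mathcal{C}'$ is in fact the same family in disguise (the condition $P(V\triangle C)=0$ is equivalent to $P(V\cap A)=P(C\cap A)$ for all $A\in\mathcal{A}$), but phrasing it via symmetric differences lets you verify closure under \emph{arbitrary} countable unions directly, via $(\bigcup_n V_n)\triangle(\bigcup_n C_n)\subseteq\bigcup_n(V_n\triangle C_n)$, so you get a full $\sigma$-algebra without any appeal to Dynkin's lemma. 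The final transfer step, $P(V)=P(C)$ and $P(U\cap V)=P(U\cap C)$ followed by the hypothesis applied to $(U,C)$, is the same in both arguments. What your approach buys is brevity and the elimination of the monotone class machinery; what the paper's formulation buys is nothing essential here, though its ``equal trace on all of $\mathcal{A}$'' phrasing is the form in which the property is actually consumed in the transfer step. One small remark, which concerns the statement rather than your proof: as literally stated, $\mathcal{D}$ is a $\sigma$-subalgebra all of whose elements are null, which is vacuous since $\Omega\in\mathcal{D}$; both you and the authors implicitly (and correctly) read $\mathcal{D}$ as a family of null sets, which is how the lemma is applied with $\mathcal{D}=\mathcal{N}$.
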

\begin{proof}
\textbf{(i)} 
The set
\[
\mathcal{E} := \{A : A \in \mathcal{A}, \text{ there exists a } B \in \mathcal{C} \text{ such that for all } 
C \in \mathcal{A}, P(A \cap C) = P(B \cap C)\} 
\]
is a Dynkin system \citep{FREM}[136A]. To see this, note that (1) $\emptyset \in \mathcal{E}$ since $\emptyset \in \mathcal{C}$, (2) for $A \in \mathcal{E}$ take any $B \in \mathcal{C}$ and observe that $P(C \cap (\Omega\backslash A)) = P(C \backslash (C \cap A)) = P(C) - P(C \cap A) = P(C) - P(C \cap B) = P(C \cap (\Omega \backslash B))$ for every $C \in \mathcal{A}$ and $\Omega \backslash B \in \mathcal{C}$, (3) let $\langle A_n\rangle_{n \in \mathbb{N}}$ be a disjoint sequence in $\mathcal{E}$ with corresponding elements $B_n \in \mathcal C$ then $P(\bigcup_{n \in \mathbb{N}} A_n )  = \sum_{n=0}^\infty P(A_n)  = \sum_{n=0}^\infty P(B_n)$. Let $B_0' = B_0$ and iteratively  let $B'_n = B_n \backslash B'_{n-1}$ then $\langle B_n' \rangle_{n\in \mathbb{N}}$ is a disjoint sequence such that $P(B_n) = P(B'_n)$: for any $m,n \in \mathbb{N}$ we have $P(B_n \cap B_m) = 
P(A_n \cap B_m) = P(A_n \cap A_m) =0$ and, hence, $P(B_n \backslash B'_{n-1}) \geq  P(B_n) - \sum_{i=0}^{n-1} P(B_n \cap B_i) = P(B_n)$. Therefore, $ \sum_{n=0}^\infty P(B_n) = \sum_{n=0}^\infty P(B'_n)= P(\bigcup_{n\in \mathbb{N}} B'_n) = P(\bigcup_{n\in \mathbb{N}} B_n)$ and $\mathcal{E}$ is a Dynkin system.

\textbf{(ii)} Let $\mathcal D':=\{ A \cap B : A \in  \mathcal{C}, B \in \mathcal{D}\}$. Observe that $\mathcal{C} \cup \mathcal{D} \cup \mathcal D'$ is closed under intersection: if $A,B  \in \mathcal{C}$ then $A \cap B \in \mathcal{C}$ and similarly for $A, B \in \mathcal{D}$. If $A \in \mathcal{C}$ and $B \in \mathcal{D}$ then $A \cap B$ is in an element of $\mathcal D'$. If $A \in \mathcal{C}$ and $B$ is an element of $\mathcal D'$ then $B = C \cap D$ for some $C \in \mathcal{C}$ , $ D \in \mathcal{D}$ and $A \cap B = ( A \cap C) \cap D$ which is again an element of $\mathcal D'$ (and equivalently for $A \in \mathcal{D}$). Furthermore, $\mathcal{C} \cup \mathcal{D} \cup \{ A \cap B : A \in  \mathcal{C}, B \in \mathcal{D}\} \subset \mathcal{E}$. Note that $\mathcal{C}$ is a subset of $\mathcal E$, and if $B \in \mathcal{D}$ then we obtain $P(\emptyset \cap C) = 0 = P(B \cap C)$ for any $C \in \mathcal{A}$ and $\emptyset \in \mathcal{C}$. Finally, if we have $A \cap B$ where $A \in \mathcal{C}$ and $B \in \mathcal{D}$ then 
$P(A \cap B \cap C) =  0 = P(\emptyset \cap C)$ for any $C \in \mathcal{A}$. Hence, $\sigma(C \cup D)\subset \mathcal{E}$ by the monotone class theorem \citep{FREM}[136B].  

\textbf{(iii)} Now, let us consider a $U \in\mathcal{B}$ and $V \in \sigma(\mathcal{C} \cup \mathcal{D})$. Due to steps (i) and (ii) we can select a $V' \in \mathcal{C}$ such that $P(V \cap C) = P(V' \cap C)$ for all $C \in \mathcal{A}$. Then with this $V'$ we have that  
$ |P(U) P(V) - P(U \cap V)| = |P(U)P(V') - P(U \cap V')| \leq \varphi P(V') = \varphi P(V)$.
\end{proof}
Another important result in this context concerns the $\varphi$-mixing property of a process that starts at a random time. This result is needed to be able to use Hoeffding bounds for batches of observations which occur in our algorithm. To state this result concisely we first define the following two families of events
\begin{align*}
&\mathcal{U} := \Bigl\{ \{\tau_n < \infty \} \cap (X_{\tau_n,i},\ldots, X_{\tau_n+u-1,i})^{-1}[B] : B \in \mathcal{B}_{\mathcal{X}}^{(u)} \Bigr\},
\notag \\
&\mathcal{V} := \Bigl \{ \{\tau_n < \infty \} \cap (X_{\tau_n+u+l-1,i},\ldots, X_{\tau_n+u+l+v-2,i})^{-1}[B] : B \in \mathcal{B}_{\mathcal{X}}^{(u)}\Bigr\},
\end{align*}
where the notation $(X_{\tau_n,i},\ldots, X_{\tau_n+u-1,i})^{-1}[B]$ denotes the set 
\[\{\omega : \omega \in \Omega, (X_{\tau_n(\omega),i}(\omega), \ldots, X_{\tau_n(\omega)+u-1,i}(\omega)) \in B \}\]
and $\mathcal{X}$ is the space in which the random variables $X_{t,i}$ attain values in.
\begin{lemma} \label{lem:stopped_phi}
In the jointly-stationary $\varphi$-mixing bandit problem formulated in Section \ref{sec:prob}, consider an arm $i \leq k$ and an increasing sequence of starting times  $\langle \tau_{n} \rangle_{n \in \mathbb{N}_+}$ of batches in which arm $i$ is played which are almost surely finite. The following holds for all $n \in \mathbb{N}_+$ and $ 1 \leq l \leq 2^{n-1} -1$ 
\begin{align} \label{eq:MixingDoubling}
\sup\{ |P(V) - P(U \cap V)/P(U)| : &U  \in \mathcal{U}, V  \in \mathcal{V}, P(U)>0,  \notag\\
&u,v \in \mathbb{N}_+, u+v+l-1 \leq 2^{n-1} 
  \} \leq 2 \varphi_l.
\end{align}
\end{lemma}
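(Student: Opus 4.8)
The plan is to eliminate the randomness of the starting time by partitioning on the value of $\tau_n$. Fix $B_U,B_V$ and lengths $u,v$, and for $t \in \mathbb{N}_+$ set $D_t := \{\tau_n = t\}$, $A_t := (X_{t,i},\ldots,X_{t+u-1,i})^{-1}[B_U]$ and $C_t := (X_{t+u+l-1,i},\ldots,X_{t+u+l+v-2,i})^{-1}[B_V]$. On $D_t$ the random-time windows collapse to these fixed blocks, and the constraint $u+v+l-1\le 2^{n-1}$ ensures that all indices $t,\ldots,t+u+l+v-2$ lie inside a single batch, so that they are genuine consecutive time steps of arm $i$ and $A_t,C_t$ are separated by a gap of exactly $l$. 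Writing $G_t := D_t \cap A_t$, the events $\langle G_t\rangle_t$ are disjoint, and $U = \bigcup_t G_t$, $U\cap V = \bigcup_t (G_t\cap C_t)$, hence $P(U)=\sum_t P(G_t)$ and $P(U\cap V)=\sum_t P(G_t\cap C_t)$.

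The core of the argument is a per-$t$ mixing estimate. Since $D_t\in\mathcal{G}_{t-1}$, both $D_t$ and $G_t$ are measurable with respect to $\sigma(X_{1..(t+u-1),1..k})$ augmented by the null sets $\mathcal{N}$, whereas $C_t$ belongs to the future block algebra at gap $l$. The null sets are exactly what obstruct a direct appeal to Definition~\ref{defn:phi-joint}, and this is where Lemma~\ref{lem:addZeroMeas} is used: it extends the inequality $|P(B)P(C)-P(B\cap C)|\le \varphi_l P(C)$ from the plain past algebra to its null-augmentation. It thereby delivers the two estimates $|P(G_t)P(C_t)-P(G_t\cap C_t)|\le \varphi_l P(G_t)$ and $|P(D_t)P(C_t)-P(D_t\cap C_t)|\le \varphi_l P(D_t)$, the disjointness of $\langle D_t\rangle_t$ keeping the summed errors at order $\varphi_l$ (cf. Lemma~\ref{lem:phi_mixing_disjoint}).

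Two bookkeeping steps then yield the two halves of the constant $2\varphi_l$. By stationarity $P(C_t)=p$ does not depend on $t$, so summing the first estimate gives $|P(U\cap V)-p\,P(U)|\le \varphi_l\sum_t P(G_t)=\varphi_l P(U)$. Since $\tau_n$ is almost surely finite, $\sum_t P(D_t)=1$, whence $p=\sum_t P(D_t)P(C_t)$ and $P(V)=\sum_t P(D_t\cap C_t)$; summing the second estimate gives $|P(V)-p|\le \varphi_l$. Combining through $p$, $|P(V)-P(U\cap V)/P(U)|\le |P(V)-p| + |p\,P(U)-P(U\cap V)|/P(U)\le 2\varphi_l$, and taking the supremum over $B_U,B_V,u,v$ (and over $P(U)>0$) completes the proof.

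The main obstacle is conceptual rather than computational: the shared random time $\tau_n$ couples the events of $\mathcal{U}$ and $\mathcal{V}$, so the mixing coefficient cannot be applied to $U$ and $V$ directly. Conditioning on $\{\tau_n=t\}$ decouples them into fixed consecutive blocks, but this manoeuvre is legitimate only because $\tau_n$ is measurable with respect to the null-augmented past $\mathcal{G}_{t-1}$; handling those null sets correctly via Lemma~\ref{lem:addZeroMeas}, and verifying through the length constraint that the blocks stay within one batch so that stationarity and $\varphi_l$ apply, are the two points that genuinely require care.
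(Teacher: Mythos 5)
Your proof is correct and follows essentially the same route as the paper's: partition on $\{\tau_n = t\}$ to reduce to fixed-time blocks, apply the $\varphi$-mixing bound per $t$ (once to $\{\tau_n=t\}\cap A_t$ and once to $\{\tau_n=t\}$ alone), use stationarity to make $P(C_t)$ constant, and sum, with the two per-$t$ estimates each contributing one factor of $\varphi_l$. The only cosmetic differences are that you route the final bound through the intermediate constant $p=P(C_1)$ rather than bounding $|P(U)P(V)-P(U\cap V)|$ directly, and that you explicitly flag the null-set augmentation via Lemma~\ref{lem:addZeroMeas}, which the paper's proof of this lemma leaves implicit.
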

\begin{proof}
For any $U,V$ as defined in (\ref{eq:MixingDoubling}) there exist sequences $\langle U_t \rangle_{t \in \mathbb{N}_+}$, and $\langle V_t \rangle_{t \in \mathbb{N}_+}$, where 
\begin{align*}
&U_t \in \sigma \left(\{ (X_{t,i},\ldots, X_{t+u-1,i})^{-1}[B] \cap \{\tau_n < \infty\} :  B \in \mathcal{B}_{\mathcal{X}}^{(u)}\}\right)\text{~and~}\\
&V_t \in \sigma \left(\{(X_{t+u+l-1,i},\ldots, X_{t+ u+l+v-2,i})^{-1}[B] \cap \{\tau_n <\infty\}: B \in \mathcal{B}_{\mathcal{X}}^{(v)}\}\right)
\end{align*} 
such that $U = \bigcup_{t \in \mathbb{N}_+} \{\tau_{n} = t\} \cap U_t$ and $V = \bigcup_{t \in \mathbb{N}_+} \{\tau_{n} = t\} \cap V_t$.  Observe also that due to stationarity $P(V_t) = P(V_1)$ for all $t \geq 1$. Since, $U_t \cap \{\tau_{n} = t\} \in \mathcal{G}_t$ the $\varphi$-mixing property gives us  
\[
|P(\{\tau_{n} = t\} \cap U_t \cap V_t) - P(\{\tau_{n}=t\} \cap U_t) P(V_t)| \leq \varphi_l P(\{\tau_{n}=t\} \cap U_t)
\]
and since $\tau_n < \infty$ almost surely for each $n$ we have $$|P(V) - P(V_1)| \leq \sum_{t=1}^\infty |P(\{\tau_{n} = t\} \cap V_t) - P(\tau_{n} = t) P(V_t)| \leq \varphi_l.$$ Combining these gives the result as 

\begin{align*}
|P(U)P(V) &- P(U \cap V)|\\ 
&= |P(U)P(V) - \sum_{t=1}^\infty P(\{\tau_n = t\} \cap U_t \cap V_t)| \\
&\leq |P(U)P(V) - \sum_{t=1}^\infty P(\{\tau_n = t\} \cap U_t) P(V_t)| + \sum_{t=1}^\infty \varphi_l P(\{\tau_n = t\} \cap U_t)
 \\
&\leq 2 \varphi_l P(U)  +  | P(U)P(V_1) - \sum_{t=1}^\infty P(\{\tau_n = t\} \cap U_t) P(V_1)| = 2 \varphi_l P(U).  
\end{align*}
\end{proof}

\subsection{Proof of Proposition \ref{lem:psi_mixing}}\label{app:app2}
\textit{Let $(\Omega,\mathcal{A},P)$ be some probability space with
$k$ mutually independent processes defined on it. If each of these processes is $\psi$-mixing then the joint process is also $\psi$-mixing and for all $i \in \mathbb{N}$,  $1+ \tilde \psi_i \leq (1+\psi_i)^k$, where the $\tilde \psi_i$ are the mixing coefficients of the joint process and the $\psi_i$ are upper bounds on the mixing coefficients of the individual processes. 
}
\begin{proof} Fix some $n,a,u,v > 0$, $a \geq u+n$ and let us denote the individual processes with $\langle X_{n,i} \rangle_{n \in \mathbb{N}_+}$, $i \leq k$. Furthermore, let $A = \{1,\ldots, u\}, B = \{a,\ldots, a+v - 1\}$ and  $\mathcal{G} = \sigma(X_{A,1},\ldots, X_{A,k}), \mathcal{H} = \sigma(X_{B,1}, \ldots, X_{B,k})$. The proof structure is the following: in step (i) we show for ``simple'' events that the $\psi$-mixing property carries over to the joint process. In step (ii), we construct a new probability space in which to each of these ``simple'' events a product of events is associated. The approach is useful since more complex events can be easily approximated by  products. In step (iii) we make use of this approximation and we relate arbitrary events to unions of products for each of which the $\psi$-mixing property derived in (i) applies. 

\textbf{(i)}  Consider a set $U \in \mathcal{G}$ of the form $U = \bigcap_{i \leq k} U_i$ where $U_i \in \sigma(X_{A,i})$ for all $i \leq k$ and a set $V \in \mathcal{H}$ of the form $V = \bigcap_{i \leq k} V_i$, $V_i \in \sigma(X_{B,i})$, for all $i\leq k$. The mutual independence of the processes implies that
\begin{align*}
|P(U) P(V) - P(U \cap V)| &=  \Bigl| \prod_{i=1}^k P(U_i) P(V_i) -  \prod_{i=1}^k P(U_i \cap V_i)\Bigr| \\
&\leq P(U_1)P(V_1) \: \Bigl| \prod_{i=2}^k P(U_i) P(V_i) -  \prod_{i=2}^k P(U_i \cap V_i)\Bigr| \\
&\quad \quad + | P(U_1)P(V_1) - P(U_1 \cap V_1)| \, \prod_{i=2}^k P(U_i \cap V_i) \\
& \leq  \Bigl(\prod_{i=1}^2 P(U_i)P(V_i) \Bigr) \: \Bigl|\prod_{i=3}^k P(U_i) P(V_i) -  \prod_{i=3}^k P(U_i \cap V_i) \Bigr|\\
& \quad \quad +  P(U_1)P(V_1) \,  |P(U_2) P(V_2) - P(U_2 \cap V_2)| \, \prod_{i=3}^k P(U_i \cap V_i) \\
&\quad \quad +  \psi_n \, P(U_1)P(V_1) \, \prod_{i=2}^k P(U_i \cap V_i) \\
&\leq \psi_n \sum_{i=1}^k \Bigl(\, \prod_{j=1}^{i} P(U_j)P(V_j) \Bigr) \Bigl(\prod_{j=i+1}^{k} P(U_j \cap V_j) \Bigr) \\
&\leq \psi_n \,  P(U)P(V) \, \sum_{i=1}^k  \prod_{j=i+1}^{k} \frac{P(U_j \cap V_j)}{P(U_j)P(V_j)} \\
&\leq  \psi_n \, P(U) P(V) \,  \sum_{i=0}^{k-1} (1+\psi_n)^{k-i-1} 
=  ((1+\psi_n)^k -1) \, P(U)P(V). 
\end{align*}
\textbf{(ii)} Next, we demonstrate the $\psi$-mixing property  for $U  \in \mathcal{G}$ and  $V \in \mathcal{H}$. We use a product measure approach.  To make use of this let the index set $C = A \cup B$ and consider the independent $\sigma$-algebras $\sigma(X_{C,1}), \ldots,$ $\sigma(X_{C,k})$. Let $P_i$ be the restriction of $P$ to $\sigma(X_{C,i})$ for all $i \leq k$ and define the product space $(\Omega^k, \widehat\otimes_{i\leq k} \sigma(X_{C,i}), \mu)$, where $\mu$ is the product measure of $P_1, \ldots , P_k$ and $\widehat\otimes_{i\leq k} \sigma(X_{C,i})$ denotes the product $\sigma$-algebra of $\sigma(X_{C,1}),\ldots, \sigma(X_{C,k})$. The map $\phi: \Omega \rightarrow \Omega^k$, $\phi(\omega)(i) = \omega$ for all $i \leq k$, is inverse-measure preserving due to \cite{FREM}[272J,254Xc]. 
 In particular, $P(\bigcap_{i\leq k} U_i) = P \phi^{-1}[U_1 \times \ldots \times U_k] =  \mu(U_1 \times \ldots \times U_k)$ for $U_i \in \sigma(X_{A,i})$. The important property is the following: if $U \in \mathcal{G}$ then there exists an $F \in \widehat\otimes_{i\leq k} \sigma(X_{A,i}) \subseteq \widehat\otimes_{i\leq k} \sigma(X_{C,i})$ such that $U = \phi^{-1}[F]$. To see this consider the set
 \[
 \mathcal{S} := \{\phi^{-1}[F] :  F \in  \widehat\otimes_{i\leq k} \sigma(X_{A,i})\}.
 \]
A standard argument shows that $\mathcal{S}$ is a $\sigma$-algebra, i.e.  $\emptyset \in \mathcal{S}$, if $U \in \mathcal{S}$ then $\Omega \backslash U = \phi^{-1}[\Omega^k \backslash U] \in \mathcal{S}$ and if $\langle U_n \rangle_{n \in \mathbb{N}}$ a sequence in $\mathcal{S}$ then $\bigcup_{n \in \mathbb{N}} U_n = \phi^{-1}[\bigcup_{n \in \mathbb{N}} F_n] \in \mathcal{S}$ for suitable sets $F_n$. 
Furthermore, $\mathcal{S} \supseteq \{\bigcap_{i\leq k} U_i : U_i \in \sigma(X_{A,i}), i \leq k\}$ and the latter set is closed under intersection. Hence, the monotone class theorem tells us that 
$\mathcal{G} = \sigma\{\bigcap_{i\leq k} U_i : U_i \in \sigma(X_{A,i}), i \leq k\} \subset \mathcal{S}$ and the result follows. Similarly, we can link $V \in \mathcal{G}$ to $\widehat\otimes_{i\leq k} \sigma(X_{B,i})$.

\textbf{(iii)} We'd like to demonstrate that the $\psi$-mixing property carries over to arbitrary elements $U \in \widehat\otimes_{i\leq k} \sigma(X_{A,i})$ and $V \in \widehat\otimes_{i\leq k} \sigma(X_{B,i})$. First, observe that if $U = U_1 \times \ldots \times U_k$ and $V = V_1 \times \ldots \times V_k$, $U_i \in \sigma(X_{A,i}), V_i \in \sigma(X_{B,i})$, $i \leq k$, then using $(U_1 \times \ldots \times U_k) \cap (V_1 \times \ldots \times V_k) = (U_1\cap V_1) \times \ldots \times (U_k\cap V_k)$ with the inverse-measure preserving property of $\phi$ and (i) it follows  that
\begin{align*}
&|\mu(U_1 \times \ldots \times U_k) \mu(V_1 \times \ldots \times V_k) - \mu(U_1 \times \ldots \times U_k \cap V_1 \times \ldots \times V_k)| \\
&= \bigl|P\bigl(\textstyle\bigcap_{i\leq k} U_i) P\bigl(\bigcap_{i\leq k} V_i\bigr) - P\bigl( \bigcap_{i\leq k} U_i \cap \bigcap_{i\leq k} V_i\bigr) \bigr| \\
&\leq \textstyle   ((1+\psi_n)^k -1) \, P\bigl(\bigcap_{i\leq k} U_i\bigr)P\bigl(\bigcap_{i\leq k} V_i\bigr) \\
&=  ((1+\psi_n)^k -1)\,  \mu(U_1 \times \ldots \times U_k) \mu(V_1 \times \ldots \times V_k).
\end{align*}
The advantage of the product approach is that we can approximate the sets $U$ and $V$ with cylinders of the form $U_1 \times \ldots \times U_k$ and this allows us to carry the $\psi$-mixing property to 
$\mathcal{G}$ and $\mathcal{H}$.  As follows from \cite{FREM}[Thm. 251Ie, pp. 200, 251W] for every $\epsilon \in (0,1]$ there exist  sequences 
$U_{1,1}, \ldots, U_{m_1,1},~\ldots, U_{1,k}, \ldots, U_{m_1,k}$ and  $V_{1,1}, \ldots,  V_{m_2,1}, ~\ldots ,V_{1,k},\ldots, V_{m_2,k}$, with 
$U_{i,j}, V_{i',j} \in \sigma(X_{C,j})$ for all $i \leq m_1, i' \leq m_2, j \leq k$, with the following four properties.
\begin{align}
\mu\Bigl(\bigcup_{i\leq m_1} U_{i,1} \times \ldots \times U_{i,k} \Bigr) &\geq \sum_{i \leq m_1} \mu(U_{i,1} \times \ldots \times U_{i,k}) - \epsilon,  \label{eq:nearly_disjoint}\\
\mu\Bigl(\bigcup_{i\leq m_2} V_{i,1} \times \ldots \times V_{i,k} \Bigr) &\geq \sum_{i \leq m_2} \mu(V_{i,1} \times \ldots \times V_{i,k}) - \epsilon/m_1,\label{eq:Vseq} \\
\mu\Bigl(U \triangle \bigcup_{i\leq m_1} U_{i,1} \times \ldots \times U_{i,k} \Bigr) &\leq \epsilon, \text{~and~} \label{eq:nd1}\\
\mu\Bigl(V \triangle \bigcup_{i\leq m_2} V_{i,1} \times \ldots \times V_{i,k} \Bigr) &\leq \epsilon/m_1,\label{eq:nd2}
\end{align}
where $\triangle$ denotes the symmetric difference between sets.

From \eqref{eq:nearly_disjoint} and \eqref{eq:Vseq} we obtain
$\sum_{i \leq m_1} \mu(U_{i,1} \times \ldots \times U_{i,k}) \leq 1+ \epsilon$ and $\sum_{i \leq m_2} \mu(V_{i,1} \times \ldots \times V_{i,k}) \leq 1 + \epsilon/m_1$ respectively. 
Moreover, we have
\begin{align}\label{eq:Ui}
&\bigl|\mu(U) - \mu\bigl( \bigcup_{i\leq m_1} U_{i,1} \times \ldots \times U_{i,k} \bigr)\bigr| \leq \mu\bigl( U \triangle \bigcup_{i\leq m_1} U_{i,1} \times \ldots \times U_{i,k}\bigr) \leq \epsilon
\end{align}
where the second inequality follows from \eqref{eq:nd1}.
Similarly, from \eqref{eq:nd2} we obtain
\begin{align}\label{eq:Vi}
&\bigl|\mu(V) - \mu\bigl( \bigcup_{i\leq m_2} V_{i,1} \times \ldots \times V_{i,k} \bigr)\bigr| \leq \epsilon/m_1.
\end{align}
Furthermore, to obtain a similar result for $U \cap V$,below, we rely on the following elementary set manipulation. Consider sets $A_1,A_2,A_3,A_4$ then 
\begin{align}
&(A_1\cap A_2) \triangle (A_3 \cap A_4) = ((A_1 \cap A_2) \backslash (A_3 \cap A_4)) \cup ((A_3 \cap A_4) \backslash (A_1 \cap A_2)) \notag \\  
&=((A_1 \cap A_2) \backslash A_3) \cup  ((A_1 \cap A_2) \backslash A_4) \cup ((A_3 \cap A_4) \backslash A_1) \cup  ((A_3 \cap A_4) \backslash A_2) \notag  \\
&\subseteq  (A_1 \backslash A_3) \cup (A_2 \backslash A_4 ) \cup (A_3 \backslash A_1) \cup (A_4 \backslash A_2) \notag  \\ 
&= (A_1 \triangle A_3) \cup (A_2 \triangle A_4).\label{eq:set_man_proof}
\end{align}
We have,
\begin{align}
&\bigl|\mu(U\cap V) - \mu\bigl(\bigl ( \bigcup_{i\leq m_1} U_{i,1} \times \ldots \times U_{i,k} \bigr )\cap \bigl (\bigcup_{i\leq m_2} V_{i,1} \times \ldots \times V_{i,k}\bigr ) \bigr) \bigr|    \notag \\
&\leq \bigl|\mu\bigl( (U \cap V) \triangle \bigl(\bigl ( \bigcup_{i\leq m_1} U_{i,1} \times \ldots \times U_{i,k}\bigr ) \cap \bigl ( \bigcup_{i\leq m_2} V_{i,1} \times \ldots \times V_{i,k}\bigr)\bigr)\bigr) \bigr|   \notag \\
&\leq \mu \bigl (U \triangle \bigl( \bigcup_{i\leq m_1} U_{i,1} \times \ldots \times U_{i,k} \bigr)\bigr ) + 
 \mu \bigl (V \triangle \bigl( \bigcup_{i\leq m_2} V_{i,1} \times \ldots \times V_{i,k} \bigr)\bigr ) \label{eq:set_man}\\
  & \leq\epsilon +  \epsilon/m_1 \label{eq:1819} \\ 
  & \leq 2 \epsilon \label{eq:inter}
\end{align}
where, \eqref{eq:set_man} is due to \eqref{eq:set_man_proof} and \eqref{eq:1819} follows from \eqref{eq:nd1} and \eqref{eq:nd2}.
Now, applying \eqref{eq:Ui}, \eqref{eq:Vi} and \eqref{eq:inter} we obtain,
\begin{align}
&|\mu(U)\mu(V) - \mu(U \cap V)|   \nonumber \\
&\leq |\mu(U)\mu(V) - \mu\bigl(\bigcup_{i\leq m_1} \bigcup_{j \leq m_2}  (U_{i,1} \times \ldots \times U_{i,k}) \cap  (V_{j,1} \times \ldots \times  V_{j,k})\bigr)| + 2\epsilon \notag \\ 
&\leq 4  \epsilon + \bigl|\mu\bigl(\bigcup_{i\leq m_1} 
 U_{i,1} \times \ldots \times U_{i,k}\bigr) \mu\bigl(\bigcup_{i\leq m_2} V_{i,1} \times \ldots \times V_{i,k}\bigr)  \notag \\
&\qquad\qquad- \mu\bigl(\bigcup_{i\leq m_1} \bigcup_{j \leq m_2}  (U_{i,1} \times \ldots \times U_{i,k}) \cap  (V_{j,1} \times \ldots \times  V_{j,k})\bigr)\bigr|. \label{inequ:I}
\end{align}
Furthermore,
\begin{align}
&\bigl|\sum_{i\leq m_1} \sum_{j \leq m_2} 
\mu(U_{i,1} \times \ldots \times U_{i,k}) \mu( V_{j,1} \times \ldots \times V_{j,k}) \notag \\ 
&\qquad\qquad\qquad\qquad - \mu\bigl(\bigcup_{i\leq m_1} U_{i,1} \times \ldots \times U_{i,k}\bigr) \mu\bigl(\bigcup_{i\leq m_2} V_{i,1} \times \ldots \times V_{i,k}\bigr)\bigr|   \notag \\
&\leq  \Bigl( \mu\bigl(\bigcup_{i\leq m_1} U_{i,1} \times \ldots \times U_{i,k}\bigr) + \epsilon \Bigr) \sum_{j\leq m_2} \mu( V_{j,1} \times \ldots \times V_{j,k}) \notag \\
&\qquad\qquad\qquad\qquad - \mu\bigl(\bigcup_{i\leq m_1} U_{i,1} \times \ldots \times U_{i,k}\bigr) \mu\bigl(\bigcup_{i\leq m_2} V_{i,1} \times \ldots \times V_{i,k}\bigr) \notag \\
&\leq 2 \epsilon + \epsilon^2 \notag \\
&\leq 3 \epsilon.  \label{inequ:II}
\end{align}
Let $U_1, \ldots, U_m$ be such that    
 $\mu(\bigcup_{i\leq m} U_i) \geq \sum_{i\leq m} \mu(U_i) - \epsilon$. Then $\sum_{i\leq m} \mu \bigl( \bigcup_{j < i} U_j \cap U_i\bigr) \leq \epsilon$. This is because 
\begin{align*}
\mu\bigl(\bigcup_{i\leq m} U_i\bigr) &= \mu\bigl(\bigcup_{i\leq m} U_i \backslash \bigl( \bigcup_{j < i} U_j \cap U_i\bigr)\bigr)\\
&= \sum_{i\leq m} \mu\bigl(U_i \backslash \bigl( \bigcup_{j < i} U_j \cap U_i\bigr)\bigr) \\
&= \sum_{i\leq m} \mu(U_i) -  \sum_{i\leq m} \mu \bigl( \bigcup_{j < i} U_j \cap U_i\bigr)\bigr).
\end{align*}
Now, for any measurable set $V$ we have by the same argument that
\begin{align*}
\mu\bigl(\bigcup_{i\leq m} U_i \cap V \bigr) &= \mu\bigl(\bigcup_{i\leq m} U_i \cap V \backslash \bigl( \bigcup_{j < i} U_j \cap U_i \bigr)\bigr)\\
&= \sum_{i\leq m} \mu\bigl(U_i \cap V \backslash \bigl( \bigcup_{j < i} U_j \cap U_i \bigr)\bigr) \\
&= \sum_{i\leq m} \mu(U_i \cap V) -  \sum_{i\leq m} \mu \bigl( \bigcup_{j < i} U_j \cap U_i  \bigr)\bigr) \\
&\geq \sum_{i\leq m} \mu(U_i\cap V) - \epsilon. 
\end{align*}
 Hence, using  \eqref{eq:nearly_disjoint} and \eqref{eq:Vseq}, we can obtain
\begin{align}
&\bigl|\sum_{i\leq m_1} \sum_{j \leq m_2} 
\mu\bigl ((U_{i,1} \times \ldots \times U_{i,k}) \cap (V_{j,1} \times \ldots \times V_{j,k})\bigr) \notag \\
&\quad - \mu\bigl(\bigcup_{i\leq m_1} \bigcup_{j \leq m_2}  (U_{i,1} \times \ldots \times U_{i,k}) \cap (V_{j,1} \times \ldots \times V_{j,k})\bigr)\bigr|  \notag \\
&\leq \sum_{i\leq m_1} \sum_{j \leq m_2} 
\mu \bigl ((U_{i,1} \times \ldots \times U_{i,k}) \cap (V_{j,1} \times \ldots \times V_{j,k})\bigr) \notag \\
&\quad - \sum_{i\leq m_1} \mu\bigl(  (U_{i,1} \times \ldots \times U_{i,k}) \cap \bigl(\bigcup_{j \leq m_2} (V_{j,1} \times \ldots \times V_{j,k})\bigr) + \epsilon  \notag \\
&\leq \epsilon  + \sum_{i\leq m_1} \epsilon/m_1 \notag \\ 
&= 2 \epsilon.  \label{inequ:III}
\end{align}
We obtain an bound on $|\mu(U)\mu(V) - \mu(U\cap V)|$
by using \eqref{inequ:I}, \eqref{inequ:II} and \eqref{inequ:III} together with the result from Step (i), i.e., 
\begin{align}
|\mu(U)\mu(V) &- \mu(U\cap V)| \notag\\ 
&\leq 9 \epsilon  
+ \sum_{i\leq m_1} \sum_{j \leq m_2} | \mu(U_{i,1} \times \ldots \times U_{i,k}) \mu(V_{j,1} \times \ldots \times V_{j,k})  \notag \\
&\hspace{3cm}- 
\mu\bigl ( (U_{i,1} \times \ldots \times U_{i,k}) \cap (V_{j,1} \times \ldots \times V_{j,k})\bigr) | \notag  \\
&\leq 9 \epsilon + ((1+\psi_n)^k -1) \sum_{i\leq m_1} 
\mu(U_{i,1} \times \ldots \times U_{i,k})  \sum_{j \leq m_2} \mu(V_{j,1} \times \ldots \times V_{j,k}) \label{eq:critical} \\
&\leq ((1+\psi_n)^k -1)\, \mu(U)\mu(V) + 9 \epsilon + 5 \epsilon ((1+\psi_n)^k -1), \notag
\end{align}
where the last inequality follows from the same arguments as used in inequalities  \eqref{inequ:I} and \eqref{inequ:II}. Since $\epsilon$ is arbitrary we derive the upper bound for arbitrary elements $U \in \widehat \otimes_{i\leq k} \sigma(X_{A,i})$ and $V \in \widehat \otimes_{i\leq k} \sigma(X_{B,i})$.

Note that this last step is the only part of the proof where  $\psi$-mixing is necessary. In particular, we could not derive \eqref{eq:critical} from the line preceding it by only assuming $\varphi$-mixing.

 To conclude,  if $U \in \mathcal{G}, V \in \mathcal{H}$ then we know from step (ii) that there are elements $E \in \widehat \otimes_{i\leq k} \sigma(X_{A,i}), F \in \widehat \otimes_{i\leq k} \sigma(X_{B,i})$ such that 
\begin{align*}
|P(U \cap V) &-P(U) P(V)| \\
&= |P \phi^{-1}[E \cap F] - P \phi^{-1}[E] P\phi^{-1}[F]| \\
 &= |\mu(E\cap F) - \mu(E) \mu(F)| \\ 
 &\leq ((1 + \psi_n)^k -1) P(U)P(V).
\end{align*}
\textbf{(iv)} The joint process is $\psi$-mixing since 
$\lim_{n\rightarrow \infty} (1-\psi_n)^k - 1 = 0$. 
\end{proof}

\subsection{Proof of Proposition \ref{prop:st-phil}} \label{sec:MacroUpperBound}
In what follows, we denote the probability space by $(\Omega,\mathcal{A},P)$ and let   $\langle \mathcal{E}_t \rangle_{t \geq 1}$ be the filtration $\mathcal{E}_t = \sigma\{\sigma(X_1, \ldots, X_t) \cup\mathcal{N}\}$ where $\mathcal{N}$ is the family of sets of measure zero. The random times are $\tau_i:\Omega \rightarrow {\mathbb{N}}_+$ and they are $\mathcal{A}-\mathcal{P}({\mathbb{N}}_+)$ measurable, where $\mathcal{P}(\mathbb{N}_+)$ denotes the power set of $\mathbb{N}_+ = \{1,2\ldots\}$. Furthermore, let the random variables $X_i$ attain values in $\mathcal{X} \subset\mathbb{R}$ and assume that they are $\mathcal{A}-\mathcal{B}_\mathcal{X}$ measurable, where $\mathcal{B}_\mathcal{X}$ denotes the Borel $\sigma$-algebra on $\mathcal{X}$. We define the random times $\tau_i$ inductively together with a filtration that tracks the observed information. 
Let $\mathcal{H}_0 = \{\emptyset,\Omega\}$, let $\tau_1$ be a $\mathcal{H}_0$-measurable random time and let $\mathcal{H}_1 = \sigma(X_{\tau_1})$. Then, for given $\tau_1,\ldots,\tau_i$ and $\mathcal{H}_1,\ldots,\mathcal{H}_i$ let $\tau_{i+1}$ be some $\mathcal{H}_{i}$-measurable random variable such that $\tau_{i+1} > \tau_i$ almost surely and define $\mathcal{H}_{i+1} = \sigma(X_{\tau_1}, \ldots, X_{\tau_{i+1}})$. We can observe that $\sigma(X_{\tau_1}, \ldots, X_{\tau_i}) = \sigma(X_{\tau_1}, \ldots, X_{\tau_t}, \tau_1, \ldots, \tau_t)$. To see this, note that $\tau_1$ is $\sigma(X_{\tau_1},\ldots, X_{\tau_t})$-measurable, hence  $\sigma(X_{\tau_1}, \ldots, X_{\tau_t}, \tau_1, \ldots, \tau_t) = \sigma(X_{\tau_1}, \ldots, X_{\tau_t}, \tau_2, \ldots, \tau_t)$. Moreover, since $\tau_2$ is $\sigma(X_{\tau_1},\tau_1)= \sigma(X_{\tau_1})$-measurable, we have that 
$\sigma(X_{\tau_1}, \ldots, X_{\tau_t}, \tau_2, \ldots, \tau_t)~= \sigma(X_{\tau_1}, \ldots, X_{\tau_t}, \tau_3, \ldots, \tau_t)$, and the observation can simply be verified by induction.  Recall the statement of Proposition \ref{prop:st-phil} as follows. 

Assume that $\langle X_t \rangle_{t \in \mathbb{N}_+}$ is a stationary $\varphi$-mixing process with mixing coefficients $\langle \varphi_i \rangle_{i \in \mathbb{N}_+}$ such that $E X_t=\mu,~t \in \mathbb{N}_+$, and $\sup_{t \in \mathbb{N}_+ }|X_t| \leq c$ for some $c \in [0,\infty)$. Furthermore, let
 $\tau_1, \tau_2, \ldots$ be a sequence of random times such that $\tau_i + \ell \leq \tau_{i+1}$ a.s. for some $\ell \geq 1$ and all $i \in \mathbb{N}_+$, and all $\tau_{i+1}$ are $\sigma(X_{\tau_1}, \ldots, X_{\tau_i})$- measurable with $\tau_1 \in \mathbb{N}_+$ being a fixed time. Then for any $n \in \mathbb{N}_+$ 
\[
 \Bigl|\frac{1}{n} \sum_{i=1}^n E X_{\tau_i} - \mu \Bigr| \leq 2 c  \varphi_\ell.
\]      
\begin{proof}
The following technical result is important in this context. 
\begin{Alemma} \label{lem:simpleProcInFt}
 For all $i,t\geq 1$ and $A \in \mathcal{H}_{i-1}$ we have that $A \cap \{\tau_i = t\} \in \mathcal{E}_t$. In particular $\{\tau_i = t\} \in \mathcal{E}_t$.
\end{Alemma}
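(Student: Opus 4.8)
The plan is to prove, by induction on $i$, the slightly stronger claim that for every $i\geq 1$, every $A \in \mathcal{H}_i$ and every $t \geq 1$ one has $A \cap \{\tau_i = t\} \in \mathcal{E}_t$. The lemma then follows at once, since $\mathcal{H}_{i-1} \subseteq \mathcal{H}_i$, and the ``in particular'' part by taking $A = \Omega$. Strengthening $\mathcal{H}_{i-1}$ to $\mathcal{H}_i$ is what turns the recursion into a self-contained induction hypothesis: it asserts precisely that all information accumulated through the first $i$ sampled values, once restricted to the event that the $i$-th sample occurs at physical time $t$, is already recorded in the ordinary filtration $\mathcal{E}_t$.

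For the base case $i = 1$, recall that $\tau_1$ is a fixed deterministic time $t_1$ and $\mathcal{H}_1 = \sigma(X_{\tau_1}) = \sigma(X_{t_1})$. Here $\{\tau_1 = t\}$ equals $\Omega$ when $t = t_1$ and $\emptyset$ otherwise, so $A \cap \{\tau_1 = t\}$ is either empty or equal to $A \in \sigma(X_t) \subseteq \mathcal{E}_t$; either way it lies in $\mathcal{E}_t$. For the inductive step I would first establish the ``in particular'' statement at level $i+1$, namely $\{\tau_{i+1} = t\} \in \mathcal{E}_t$. Since $\tau_{i+1}$ is $\mathcal{H}_i$-measurable and $\tau_{i+1} > \tau_i$ almost surely, I would decompose $\{\tau_{i+1} = t\} = \bigcup_{s=1}^{t-1} \{\tau_{i+1} = t\} \cap \{\tau_i = s\}$ up to a $P$-null remainder (absorbed by $\mathcal{N} \subseteq \mathcal{E}_t$). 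Each set $\{\tau_{i+1} = t\}$ lies in $\mathcal{H}_i$, so applying the induction hypothesis with this set in the role of $A$ and the event $\{\tau_i = s\}$ gives $\{\tau_{i+1} = t\}\cap\{\tau_i = s\} \in \mathcal{E}_s \subseteq \mathcal{E}_t$ for each $s \leq t-1$; a finite union then yields $\{\tau_{i+1} = t\} \in \mathcal{E}_t$.

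The main work, and what I expect to be the principal obstacle, is extending this from the event itself to an arbitrary $A \in \mathcal{H}_{i+1}$. Here I would fix $t$ and $s \leq t-1$ and run a Dynkin ($\pi$-$\lambda$) argument on the class $\mathcal{D}_{t,s} := \{A \in \mathcal{H}_{i+1} : A \cap \{\tau_{i+1} = t\} \cap \{\tau_i = s\} \in \mathcal{E}_t\}$, which is readily checked to be a Dynkin system (its containing $\Omega$ being exactly the preliminary computation above). The delicate point is verifying that a generating $\pi$-system lies in $\mathcal{D}_{t,s}$: writing $\mathcal{H}_{i+1} = \sigma(\mathcal{H}_i \cup \sigma(X_{\tau_{i+1}}))$, a natural $\pi$-system is $\{H \cap \{X_{\tau_{i+1}} \in B\} : H \in \mathcal{H}_i,\ B \in \mathcal{B}_{\mathcal{X}}\}$, and on the event $\{\tau_{i+1} = t\}$ the variable $X_{\tau_{i+1}}$ coincides with $X_t$, so $\{X_{\tau_{i+1}} \in B\}\cap\{\tau_{i+1}=t\} = \{X_t \in B\}\cap\{\tau_{i+1}=t\}$ with $\{X_t \in B\} \in \mathcal{E}_t$. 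Combined with the induction hypothesis applied to $H \cap \{\tau_{i+1}=t\} \in \mathcal{H}_i$ (which places $H\cap\{\tau_{i+1}=t\}\cap\{\tau_i=s\}$ in $\mathcal{E}_s \subseteq \mathcal{E}_t$), each generator falls in $\mathcal{D}_{t,s}$.

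Dynkin's theorem then gives $\mathcal{H}_{i+1} \subseteq \mathcal{D}_{t,s}$, and summing the finitely many values $s \leq t-1$ yields $A \cap \{\tau_{i+1} = t\} \in \mathcal{E}_t$ for every $A \in \mathcal{H}_{i+1}$, closing the induction. The only subtleties requiring care are that the almost-sure inequality $\tau_{i+1} > \tau_i$ leaves a null remainder, handled by the inclusion $\mathcal{N} \subseteq \mathcal{E}_t$, and that the coincidence $X_{\tau_{i+1}} = X_t$ may be invoked only \emph{after} intersecting with $\{\tau_{i+1} = t\}$.
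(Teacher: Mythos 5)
Your proof is correct and follows essentially the same route as the paper's: induction on $i$, decomposition of $\{\tau_{i+1}=t\}$ over the value $s$ of $\tau_i$ with the almost-sure remainder absorbed into $\mathcal{N}\subseteq\mathcal{E}_t$, the identification of $X_{\tau_{i+1}}$ with $X_t$ only after intersecting with $\{\tau_{i+1}=t\}$, and a generating-class argument to pass from a $\pi$-system to all of $\mathcal{H}_{i+1}$ (the paper generates from the traces of $\{\{\tau_i=t\}\cap\bigcap_{j<i}B_j : B_j\in\sigma(X_{\tau_j})\}$ where you run a Dynkin system of good sets; the two are interchangeable). Your explicit strengthening of the induction hypothesis to $A\in\mathcal{H}_i$ is in fact the cleaner formulation: it is precisely what is needed to apply the hypothesis to $A=\{\tau_{i+1}=t\}\in\mathcal{H}_i$ inside the inductive step, and also what the lemma's later use in the proof of Proposition~\ref{prop:st-phil} (with $B=\{\tau_i=t\}\cap\{\tau_{i-1}=s\}$ and $\{\tau_i=t\}\in\mathcal{H}_{i-1}$) actually requires, a point the paper's statement with $A\in\mathcal{H}_{i-1}$ leaves implicit.
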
 

\begin{proof}
 For 
 $i=1$ the result holds trivially since $A \cap \{\tau_i = t\}$ is either $\Omega$ or $\emptyset$ and since $\mathcal{E}_t$ is a $\sigma$-algebra it contains $\Omega$ and $\emptyset$. For any other $i \geq 2$ observe that if the claim holds for all $1 \leq j \leq i-1$ then since  $\tau_i$ is $\mathcal{H}_{i-1}$-measurable 
 and because $\tau_i > \tau_{i-1}$ almost surely implies that  $U := \{\tau_i = t\} \cap \{\tau_{i-1} \geq t \} \in \mathcal{N} \subset \mathcal{E}_t$ and  we have
 \[
 \{\tau_i = t \} = U \cup \bigcup_{s = 1}^{t-1} \{\tau_i =t\} \cap \{\tau_{i-1} = s\} \in \mathcal{E}_t.
 \]
We can also observe that $\{B \cap \{\tau_i = t\}: B \in \sigma(X_{\tau_i}) \} \subset \mathcal{E}_t$ because 
\[
\{B \cap \{\tau_i = t\}: B \in \sigma(X_{\tau_i}) \} = 
\{X_t^{-1}[B'] \cap \{\tau_i = t\} : B' \in \mathcal{B}_\mathcal{X}\} 
\]
and both   $X_t^{-1}[B']$ and $\{\tau_i = t \}$ lie in  $\mathcal{E}_t$. 

Also, for any $j \in \{1, \ldots, i-1\}$ we have $\{B \cap \{\tau_{i} = t \} : B \in \sigma(X_{\tau_j}) \} \subset \mathcal{E}_t $: let $U = \{\tau_i =t \} \cap \{\tau_j \geq t+1-(i-j) \}$ then $U \in \mathcal{N}$ and
\[
\{B \cap \{\tau_{i} = t \} : B \in \sigma(X_{\tau_j}) \} = U \cup \bigcup_{s=1}^{t-(i-j)} \{X_s^{-1}[B'] \cap \{\tau_i = t\} \cap \{\tau_j = s \} : B' \in \mathcal{B}_\mathcal{X}\}
\]
and $X_s^{-1}[B'] \cap \{ \tau_j = s\} \in \mathcal{E}_s \subset \mathcal{E}_t$. We have shown that 
\[
\Bigl\{\{\tau_i=t\} \cap \bigcap_{j=1}^{i-1} B_j  : B_j \in \sigma(X_{\tau_j}) \Bigr\} \subset \mathcal{E}_t.
\]
This implies directly that 
\[
\sigma\Bigl\{\{\tau_i=t\} \cap \bigcap_{j=1}^{i-1} B_j  : B_j \in \sigma(X_{\tau_j}) \Bigr\} \subset \mathcal{E}_t.
\]
By induction one can verify that  $\{A \cap \{\tau_i = t\} : A \in \sigma(X_{\tau_1},\ldots,X_{\tau_{i-1}})\}$ is included in the left side. %
\end{proof}
We also need the following observation: Let $s,t \in \mathbb{N}_+$, $s < t$ then for any $B \in \mathcal{E}_s$ we have $|\int_B (E(X_t|\mathcal{E}_s) - E(X_t))| \leq 2 \varphi_{t-s} c P(B)$.  This follows from Lemma \ref{lem:addZeroMeas} by remembering that $\mathcal{E}_s = \sigma( \sigma(X_1,\ldots,X_s) \cup \mathcal{N})$, i.e. the $\varphi$-mixing property implies this upper bound for $\sigma(X_1,\ldots,X_s)$ instead of $\mathcal{E}_s$ and Lemma \ref{lem:addZeroMeas} allows us to extend this property to $\sigma( \sigma(X_1,\ldots,X_s) \cup \mathcal{N})$ by using in Lemma \ref{lem:addZeroMeas} the following $\sigma$-algebras: $\mathcal{B} = \sigma(X_t), \mathcal{C} = \sigma(X_1,\ldots, X_s)$ and  $\mathcal{D} = \mathcal{N}$.

Now, coming to the main proof consider first $\tau_1$.  Note that $\tau_1$ is independent of any $X_i$ since for $U \in \sigma(X_i), V \in \sigma(\tau_1) = \{\emptyset,\Omega\}$ we have either $V = \Omega$ and $
P( U \cap V) = P(U) = P(U) P(V)$ or  $V = \emptyset$  and  $P( U \cap V) = 0 = P(U) P(V)$. Independence and stationarity of the process $\langle X_t \rangle_{t \in \mathbb{N}_+}$ give us for any $t \in \mathbb{N}_+$ that 
\[
E \left (X_t \times \chi{\{\tau_1 = t\}}\right ) = P(\tau_1 =t) E(X_t) = P(\tau_1 = t) E(X_1).
\]
Now, since the $X_t$ are bounded and $\tau_1$ attains a value in $\mathbb{N}_+$ we know that 
\[\sum_{t=1}^\infty E \left ( \abs{X_t} \times \chi{\{\tau_1 = t\}}\right )  = \sum_{t=1}^\infty P(\tau_1= t) E \abs{X_1}\] is finite and B. Levi's Theorem \citep{FREM}[123A] tells us that $\sum_{t=1}^\infty |X_t| \times \chi{\{\tau_1 = t\}}$ is integrable. Also $|\sum_{t=1}^{t'} X_t \times \chi{\{\tau_1 = t\}} |$ is upper bounded by the integrable function  $\sum_{t=1}^\infty |X_t| \times \chi{\{\tau_1 = t\}}$ for all $t'\in \mathbb{N}_+$ and Lebesgue's Dominated Convergence  Theorem \citep{FREM}[123C] gives us 
\begin{align*}
E(X_{\tau_1}) = E\bigl( \sum_{t=1}^\infty X_t \times \chi{\{\tau_1 = t\}} \bigr) = \sum_{t=1}^\infty E( X_t \times \chi{\{\tau_1 = t\}}) = \sum_{t=1}^\infty P(\tau_1 = t) E (X_t) = E( X_1). 
\end{align*}
We perform induction over $\tau_i$, $i \geq 2$. The 
set $\{\tau_i = t \}$ is an element of  $\mathcal{H}_{i-1}$ and for any $s < t$, $s,t \in \mathbb{N}_+$ we have that $B = \{ \tau_i = t\} \cap \{ \tau_{i-1} = s\} \in \mathcal{H}_{i-1}$. Observe that Lemma \ref{lem:simpleProcInFt} tells us that $B \in \mathcal{E}_s$ and, hence, that
\[
\int_B E(X_{\tau_i} | \mathcal{H}_{i-1})= \int_B X_{t}  = \int_B E(X_t | \mathcal{E}_s). 
\]
Due to stationarity this implies that
\begin{equation}
\bigl|\int_B (E(X_{\tau_i} | \mathcal{H}_{i-1}) - E(X_t))\bigr| = \bigl| \int_B (E(X_t | \mathcal{E}_s) - E(X_1)) \bigr| \leq 2 c \varphi_{t-s} P(B).  \label{eq:phi_l}
\end{equation}
Because $\tau_i$ is $\mathcal{H}_{i-1}$ measurable we can now write 
\begin{align}
E(X_{\tau_i}) 
&= E E(X_{\tau_i} | \mathcal{H}_{i-1})  \notag \\
& = E E(\sum_{t=1}^\infty X_t \times \chi{\{\tau_i = t \}} |\mathcal{H}_{i-1})\label{eq:inf_sum} \\
&= \sum_{t=1}^\infty E E(X_t \times \chi{\{\tau_i = t \}} |\mathcal{H}_{i-1}) \label{eq:move_sum}\\
&=
\sum_{t=1}^\infty E (E(X_t |\mathcal{H}_{i-1}) \times \chi{\{\tau_i = t \}})  \label{eq:chi_meaus}\\
&= \sum_{t=1}^\infty E( E(X_t |\mathcal{H}_{i-1}) \times \chi \Bigl(\{\tau_i = t \} \cap \bigcup_{s=1}^{t-\ell} \{\tau_{i-1} = s\}\Bigr)) \notag \\
&= \sum_{t=1}^\infty \sum_{s=1}^{t-\ell} E( E(X_t |\mathcal{H}_{i-1})  \times \chi \Bigl(\{\tau_i = t \} \cap \{\tau_{i-1} = s\}\Bigl), \label{eq1:Xtau}
\end{align}
where, with the same argument as above, using B. Levi's Theorem and Lebesgue's Dominated Convergence Theorem for the expectation operator and for the conditional expectation operator, the infinite summation in \eqref{eq:inf_sum} is moved outside to give \eqref{eq:move_sum}. Moreover,  \eqref{eq:chi_meaus} is due to  the fact that $\tau_i$ is $\mathcal H_{i-1}$-measurable. 
Finally, 
we obtain,
\begin{align}
&\left | E(X_{\tau_i}) - E (X_1) \right | \\
 &=\left | 
\sum_{t=1}^\infty \sum_{s=1}^{t-\ell} ( E( E(X_t |\mathcal{H}_{i-1})  \times \chi \left (\{\tau_i = t \} \cap \{\tau_{i-1} = s\}\right)) - P(\tau_i=t, \tau_{i-1} = s) E(X_1))\right | \label{eq:Xtausubs}\\
&\leq \sum_{t=1}^\infty \sum_{s=1}^{t-\ell} \bigl |( E( E(X_t |\mathcal{H}_{i-1})  \times \chi \left (\{\tau_i = t \} \cap \{\tau_{i-1} = s\}\right)) - P(\tau_i=t, \tau_{i-1} = s) E(X_1))\bigr| \notag\\
&\leq  2 c \sum_{t=1}^\infty \sum_{s=1}^{t-\ell}  \varphi_{t-s} P(\tau_i = t, \tau_{i-1}=s) \label{eq:phi_L_subs}\\ 
&\leq 2c \varphi_{\ell }\sum_{t=1}^\infty \sum_{s=1}^{t-\ell}  P(\tau_i = t, \tau_{i-1}=s) \label{eq:phi_dec} \\ 
&= 2 c \varphi_{\ell}, \notag
\end{align}
where, \eqref{eq:Xtausubs} follows from  \eqref{eq1:Xtau}, \eqref{eq:phi_L_subs} follows from \eqref{eq:phi_l} with $B := \{\tau_i = t \} \cap \{\tau_{i-1} = s\} \in \mathcal{H}_{i-1}$, and \eqref{eq:phi_dec} is due to the fact that $\langle \varphi_{n} \rangle_{n \in \N_+}$ is a decreasing sequence, see, e.g. \citep{bradley2007introduction}[vol. 1 pp. 69], and that $t-s \geq \ell$. This proves the statement. 
 \end{proof}

\subsection{Details of Example \ref{ex:nonMixingPolicy}} \label{app:ExamplenonMixingPolicy}
We give in this section the details of the construction in Example \ref{ex:nonMixingPolicy}. This example demonstrates that the sequence $\langle X_{\tau_n} \rangle_{n \in \mathbb{N}_+}$ of samples of one of the arms of a two armed bandit problem does not need to be $\varphi$-mixing even though the original process is $\varphi$-mixing. The argument uses standard results from Markov chains as they can be found in \citep{LEV08}  and a well known perturbation result for Markov chains. We provide the details of the argument for completeness.  

Assume we have a two arm bandit problem where the pay-off for arm two is zero at all time and the pay-off distribution of arm one is described by a Markov chain with two states, transition probabilities 
 $p_{11}= p_{22} = 1-\epsilon, p_{12}= p_{21} = \epsilon$, for some $\epsilon \in (0,1)$, and  probability $1/2$ to be in state $1$ at time $t=0$. The player gains a pay-off of $1$ if the Markov chain is in state $1$ and a pay-off of $0$ if the Markov chain is in state $2$. The Markov chain is irreducible  and aperiodic.  Furthermore, the Markov chain induces a stationary pay-off distribution. This implies that we are dealing with a jointly stationary $\varphi$-mixing process
 with mixing coefficients  $\varphi_k$ being upper bounded by  $\varphi_k \leq  (1-2 \epsilon)^k$ : one can derive the particular bound by considering an eigendecomposition of the transition matrix $T$ which yields eigenvalues $\lambda_1 = 1, \lambda_2 = 1-2 \epsilon$ and eigenvectors $u_1 = \sqrt{2} (1/2 \quad 1/2)^\top, u_2 = \sqrt{2} (1/2 \enspace -1/2)^\top$, i.e. with $U= (u_1 \quad u_2)$ and $\Lambda$ being the diagonal matrix with entries $\lambda_1$ and $\lambda_2$ we have $T = U \Lambda U^\top$. The stationary distribution over the states is $s= (1/2 \quad 1/2)^\top$ and for any vector $v = (v_1 \quad v_2)^\top, v_1,v_2 \geq 0, v_1 + v_2 = 1$, we have $\Lambda^k U^\top v = 
 (1/\sqrt{2}) (1 \quad (1-2\epsilon)^k (v_1-v_2))^\top$. Hence, $T^k v  - s = (1/2) (1-2\epsilon)^k(v_1 - v_2) (1 \quad -1)^\top$ and 
$\norm{T^k v - s}_\infty \leq (1/2) (1-2\epsilon)^k$. The mixing coefficients can now be bounded in the following way. Let $X_{t,i}, t \geq 1, i \in \{1,2\}$, be random variables that represent the pay-off of arm $i$ gained at time $t$. Consider a particular realization where $X_{1,1} = x_1, \ldots, X_{n,1} = x_n$ and $X_{n+k,1} = x_{n+k}, \ldots, X_{n + k + m,1} = x_{n+k+m}$ for some $x_1,\ldots,x_n, x_{n+k+1},\ldots, x_{n+k+m} \in \{0,1\}$ then $P(X_{1,1} = x_1, \ldots, X_{n,1} = x_n, X_{n+k,1} = x_{n+k}, \ldots, X_{n + k + m,1} = x_{n+k+m}) = P(X_{1,1} = x_1, \ldots, X_{n,1} = x_n) P(X_{n+k,1} = x_{n+k}, \ldots, X_{n + k + m,1} = x_{n+m+k}  | X_{n,1} = x_n)$ and 
 \begin{align}
&|P(X_{n+k,1} = x_{n+k}, \ldots, X_{n + k + m,1} = x_{n+m+k}  | X_{n,1} = x_n) \notag \\
& \hspace{3cm}- P(X_{n+k,1} = x_{n+k}, \ldots, X_{n + k + m,1} = x_{n+m+k})| \notag \\
&= P(X_{n+k+1,1} = x_{n+k+1}, \ldots, X_{n + k + m,1} = x_{n+m+k} | X_{n+k,1} = x_{n+k})  \notag
\\
&\hspace{3cm} \times | P(X_{n+k,1} = x_{n+k}) - P(X_{n+k,1} = x_{n+k}| X_{n,1} = x_n) | \notag \\
&\leq (1/2) (1-2 \epsilon)^k  P(X_{n+k+1,1} = x_{n+k+1}, \ldots, X_{n + k + m,1} = x_{n+m+k} | X_{n+k,1} = x_{n+k})  \notag \\
&= (1-2\epsilon)^k P(X_{n+k,1} = x_{n+k}, \ldots, X_{n + k + m,1} = x_{n+m+k}). \label{eq:markovchain_bnd}
\end{align}
since $P(X_{n+k,1} = x_{n+k}) = 1/2$.
Let $A = \{1,\ldots, n\}$, $B=\{n+k,\ldots,n+k+m\}$ and consider $\sigma(X_A), \sigma(X_B)$.  The events in these $\sigma$-algebras are finite unions of events of the form $\{X_{1,1} = x_1,\ldots, X_{n,1} = x_n\}$ and  $\{X_{n+k,1} = x_{n_k},\ldots, X_{n+k+m,1} = x_{n+k+m}\}$.

For any $U \in  \sigma(X_A)$ we know that $U$ consists at most of finitely many such events $U_1,\ldots, U_l$, $U_i \cap U_j = \emptyset$, for all $i,j \leq l$.  Similarly for $V \in \sigma(X_B)$ we know that $V = V_1 \cup \ldots \cup V_o$, $V_i \cap V_j = \emptyset$, for all $i,j \leq o$. The above argument which leads to the bound  \eqref{eq:markovchain_bnd} allows us to conclude that for any $U \in \sigma(X_A)$ and $V \in \sigma(X_B)$
\begin{align*}
|P(U)P(V) - P(U \cap V)| &= \sum_{i\leq l} \sum_{j \leq o} |P(U_i)P(V_j) - P(U_i \cap V_j)|  \\
&\leq   (1-2\epsilon)^k \sum_{i\leq l} \sum_{j \leq o} P(U_i) P(V_j) 
\leq (1-2\epsilon)^k.
\end{align*}
Hence the Markov chain is $\varphi$-mixing. The second arm has a constant reward and does not introduce any dependencies in the pay-off over time. Therefore we have a jointly stationary $\varphi$-mixing process with the mixing coefficient being equal to the mixing coefficients of the Markov chain.


Now fix some $\delta>0$ and consider the following policy $\pi^\delta$. At $t=1$ the policy plays arm $1$ receiving pay-off $X_{1,1}$. Then at any other $t\geq 2$ the arm is selected according to the  following rules: if at $t-1$ arm $1$ has been played and $X_{t-1,1} = X_{1,1}$ then the policy chooses at $t$ arm $1$; if at $t-1$ arm $1$ has been played and $X_{t-1,1} \not = X_{1,1}$ then the policy chooses at $t$ arm $2$ and plays arm $2$ for the next  $k := \lceil \log(2 \delta)/\log(1-2\epsilon) \rceil$ rounds before switching back to arm $1$. We'd like to show that the sequence of pay-offs generated by policy $\pi^\delta$ at arm $1$ is not $\varphi$-mixing. Let $\tau_1,\tau_2,\ldots $ be the sequence of random times at which arm $1$ is played (by construction $\tau_1 = 1$ and $\tau_2 = 2$). The evolution of the process $ \langle X_{\tau_n} \rangle_{n \geq 1}$ can also be described by transition matrices.  The transition probabilities to move from a state at $t=1$ to a state at $t=2$ are just the probabilities summarized in $T$. For all other $t$  ($t\geq 2$) the transition matrix is either 
\begin{equation} \label{eq:tildeTdef}
\tilde T = \begin{pmatrix}
1- \epsilon & \epsilon \\
(T^k)_{21} & (T^k)_{22}
\end{pmatrix} \text{\quad or \quad} \tilde T = \begin{pmatrix}
(T^k)_{11} & (T^k)_{12} \\
\epsilon & 1- \epsilon 
\end{pmatrix}
\end{equation}
depending on $X_{\tau_1}$, i.e. if $X_{\tau_1} = 1$ then the former is describing the evolution and if $X_{\tau_1} =0$ the latter is the one describing the evolution of the Markov chain. In the following we discuss the case that $X_{\tau_1} = 1$, but the same arguments apply to the case $X_{\tau_1} = 0$. We can observe  that  $\|v^\top (\tilde T - \hat T)\|_\infty  \leq \delta$ for all $v$ with non-negative entries and $v_1+ v_2 = 1$, where 
\[
\hat T  =  \begin{pmatrix}
1- \epsilon & \epsilon \\
1/2 & 1/2
\end{pmatrix}
\]
in case that $X_{\tau_1} = 1$. The claim can be verified through 
\begin{align*}
\norm{v^\top \tilde T  - v^\top \hat T}_\infty 
&= \norm{ v^\top\begin{pmatrix}
0 & 0 \\ 
\tilde T_{21} - 1/2 & \tilde T_{22} - 1/2 
\end{pmatrix}  }_\infty  \\
&= v_2 \norm{T^k \begin{pmatrix}
0 \\ 1
\end{pmatrix} - \begin{pmatrix} 1/2 \\  1/2 \end{pmatrix}}_\infty \leq v_2 \delta \leq \delta.
\end{align*}
The Markov chains associated to $\tilde T$ and $\hat T$ are both irreducible and aperiodic. This implies in particular the existence of stationary distributions, with the associated probabilities to be in state one and two summarized in vectors $\tilde s,\hat s \in [0,1]^2$, and the convergence of $\tilde T^l,\hat T^l$ to $\tilde s, \hat s$ in $l$ (measured in $\norm{}_\infty$, \citep{LEV08}[Thm. 4.9]).  In particular, there exist constants $\tilde c,\hat c >0$ and $\tilde \alpha, \hat \alpha \in (0,1)$ such that for all $l\geq 1$ and any vector $v \in [0,1]^2$ with $v_1 + v_2 = 1$ 
\[
\| v^\top \tilde T^l - \tilde s \|_\infty \leq \tilde c \tilde \alpha^l \text{\quad and \quad} \| v^\top \hat T^l  - \hat s \|_\infty \leq \hat c \hat \alpha^l.
\] 
We can also calculate the stationary distribution of $\hat T$ explicitly to get $\hat s = (1/(1+2\epsilon) \enspace 2\epsilon/(1+2\epsilon))^\top$.

It is known that Markov chains with slightly perturbed transition matrices have similar stationary distributions. Due to \cite{CHO01} there exists a constant $c > 0$ that is only dependent on $\hat T$ (and independent of $\tilde T$) such that 
$\|\tilde s - \hat s \|_\infty \leq c \| \tilde T - \hat T\|_{\infty,1} \leq 2 \delta c $
where $ \| \tilde T - \hat T\|_{\infty,1} := \max_{i \in \{1,2\}} \sum_{j=1}^2 |\tilde T_{i,j} - \hat T_{i,j}| \leq 2 \delta$. Combining these inequalities yields  
$\| v^\top \tilde T^l - \hat s\|_\infty \leq 2 \delta c + \tilde c \tilde \alpha^l$ for any $v$ with non-negative entries and $v_1 + v_2 = 1$.

Consider now the events $U= \{X_{\tau_1,1} = 1\}$ and $V = \{X_{\tau_n,1} = 1\}$ for $n \geq 2$. We have $P(U) = 1/2 = P(V)$ where the second equality follows from
\begin{align*}
2 P(X_{\tau_n,1} = 0) &=  P(  X_{\tau_n,1} = 0 | X_{\tau_1,1} = 0 ) + P(  X_{\tau_n,1} = 0 | X_{\tau_1,1} = 1) \\ &=  P(  X_{\tau_n,1} = 1 | X_{\tau_1,1} = 1 ) + P(  X_{\tau_n,1} = 1 | X_{\tau_1,1} = 0) \\
&= 2 P(X_{\tau_n,1} = 1).
\end{align*}
 Furthermore,
with $\tilde T$ being the matrix defined on the left side of (\ref{eq:tildeTdef})
\begin{align*}
\Bigl|\frac{P(U \cap V)}{P(U)} - \frac{1}{1+2\epsilon}\Bigr| %
&=    \Bigl| (1- \epsilon \enspace \epsilon) \tilde T^{n-1} \begin{pmatrix} 1 \\  0  \end{pmatrix} -  \frac{1}{1+2\epsilon} \Bigr| \\
&\leq  \Bigl\| (1- \epsilon \enspace \epsilon) \tilde T^{n-1} - \frac{1}{1 + 2 \epsilon} \begin{pmatrix} 1 \\  2\epsilon  \end{pmatrix}  \Bigr\|_\infty
\end{align*}
and the last term is upper bounded by $2 \delta c + \tilde c \tilde \alpha^{n-1}$. Recalling that $c$ does not depend on $\tilde T$ and, hence, not on $\delta$ we see that we can make the term $2 \delta c$ arbitrary small. Furthermore, by considering a large $n$ we can make the second term arbitrary small. In particular, let $\epsilon = 1/10$ then there exists a $\pi^\delta$ and an $N \in \mathbb{N}$ such that for all $n\geq N$ 
\begin{equation} \label{eq:stat_of_cond}
\Bigl|\frac{P(U \cap V)}{P(U)} - \frac{1}{1+2\epsilon}\Bigr| \leq 1/10.
\end{equation}
Hence, for all $n\geq N$ 
\begin{align*}
|P(U)P(V) -  P(U \cap V)| &\geq P(U) \Bigl(  
\Bigl| \frac{1}{2} -  \frac{1}{1+2\epsilon}\Bigr| - 1/10 \Bigr) \\ 
&\geq  P(U)/5
\end{align*}
and the process is not $\varphi$-mixing.
\section{Proof of Theorem~\ref{thm:regret_azal}: Regret Bound for $\varphi$-mixing Bandits}
\label{app:thm:regret_azal} 
For the regret $\overline{\mathcal R}(n)$ of Algorithm~\ref{alg:ucb} after $n$ rounds of play. We have,
\begin{align*}
 \overline{\mathcal R}(n)&\leq \sum_{\substack{i=1\\ \mu_i \neq \mu^*}}^k\frac{32(1+8\norm{\varphi})\ln n}{\Delta_i}+(1+2\pi^2/3)(\sum_{i=1}^k\Delta_i)+\norm{\varphi}\log n
\end{align*}
\begin{proof}
Thanks to Proposition~\ref{lemma22}, in order to bound the regret $\overline{\mathcal R}(n)$ it suffices to calculate the expected number of times $T_i(n)$ that a suboptimal arm is played in $n$ rounds. 
For any $s, t \in \N_+$, let $c_{t,s}:=\sqrt{(8 \zeta ((1/8)+\ln t))/2^s}+\norm{\varphi}/2^{s-1}$, where $\zeta=1+8\norm{\varphi}$. Recall that Algorithm~\ref{alg:ucb} plays its selected arms in batches of exponentially growing length so that if arm $j$ for $j \in 1..k$ is selected at round $t$, it is played for 
$2^{s_j(t)}$ consecutive time-steps, where $s_j(t)$ denotes the number of times that arm $j$ has been selected up-to time $t$. 
Below, we denote by $\overline{X}_{j,s}:=\frac{1}{2^{s}}\sum_{t'=t}^{t+2^{s}-1}X_{t',j}$ with $s:=s_j(t)$, the Algorithm's estimate of the stationary mean of arm $j$ selected at time $t$.
As usual, a superscript ``$*$'' refers to the quantities for the arm with the highest stationary mean. 
Fix some $l \in \N_+$. We have,
\begin{align*}
T_i(n)  &= 1+\sum_{t=k+1}^n \chi\{\pi_t=i\} \\
        & \leq 2^{l+1}+ \sum_{t=2^{l+1}+1}^n \sum_{m=l+1}^{\log t} 2^m \chi\{\tau_{m,i}=t\} \\
        & \leq 2^{l+1}+ \sum_{t=2^{l+1}+1}^n \sum_{m=l+1}^{\log t} 2^m \chi\Bigl\{\min_{s=1..\log t}\overline{X}^*_{s}+c_{t,s}\leq \frac{1}{2^{m-1}}\sum_{u=t}^{t+2^{m-1}-1}X_{u,i}+c_{t,m-1}\Bigr\}\\
        & \leq 2^{l+1}+ \sum_{t=1}^{\infty} \sum_{m=l+1}^{\log t}\sum_{s=1}^{\log t}2^m \chi\Bigl\{\overline{X}^*_{s}+c_{t,s}\leq \frac{1}{2^{m-1}}\sum_{u=t}^{t+2^{m-1}-1}X_{u,i}+c_{t,m-1}\Bigr\}
\end{align*}
For every $t \in \N$ and every $s \in 1..\log t$ we have that $\overline{X}^*_{s}+c_{t,s}\leq \frac{1}{2^{m-1}}\sum_{u=t}^{t+2^{m-1}-1}X_{u,i}+c_{t,m-1}$ implies that
\begin{align}
\overline{X}^*_{s}&\leq \mu^*-c_{t,s}\\
\frac{1}{2^{m-1}}\sum_{u=t}^{t+2^{m-1}-1}X_{u,i}&\geq \mu_i+c_{t,m-1}\\
\mu^*&<\mu_i+2c_{t,m-1}\label{eq:delta}
\end{align}
Now, observe that for a fixed $t \in 1..n$ we have, 
\begin{align}
P(\overline{X}^*_{s}\leq \mu^*-c_{t,s})
&\leq P\left (\left |\overline{X}^*_{s}-\mu^*\right |\geq c_{t,s}\right )\nonumber \\
&\leq P \left (\left |\sum_{j=0}^{2^s-1} X^*_{\tau_{s}+j}-E X^*_{\tau_{s}+j} \right |+\left |\sum_{j=0}^{2^s-1}(E X^*_{\tau_{s}+j}-\mu^*)\right |\geq 2^s c_{t,s} \right )\nonumber \\
&\leq P\left (\left |\sum_{j=0}^{2^s-1} X^*_{\tau_{s}+j}-E X^*_{\tau_{s}+j} \right |\geq 2^s c_{t,s}-2\norm{\varphi}\right)\label{azal:eq:prop1}\\
&\leq \sqrt{e}\exp\Bigl(-\frac{(2^sc_{t,s}-2\norm{\varphi})^2}{2^{s+1}\zeta}\Bigr) \label{azal:hoeff}\\
&\leq t^{-4}, \label{azal:eq:t-4:1}
\end{align}
where, \eqref{azal:eq:prop1} follows from Lemma~\ref{lemma11} and \eqref{azal:hoeff} follows from a Hoeffding-type bound for $\varphi$-mixing processes given by Corollary 2.1 of \cite{RIO99} which is applicable due to Lemma \ref{lem:stopped_phi} (pp. \pageref{lem:stopped_phi}).
Moreover, noting that $\norm{\varphi}\geq 0$ we similarly obtain, 
\begin{align}
 P\Bigl(\frac{1}{2^{m-1}}\!\! \sum_{u=t}^{t+2^{m-1}-1} \!\! X_{u,i}\geq \mu_i+c_{t,m-1}\Bigr)&\leq P\Bigl(\frac{1}{2^{m-1}}\!\!\sum_{u=t}^{t+2^{m-1}-1}\!\!X_{u,i}\geq \mu_i+c_{t,m-1}-\frac{2}{2^{m-1}}\norm{\varphi}\Bigr)\nonumber \\
&\leq P\Bigl(|\!\!\sum_{u=t}^{t+2^{m-1}-1}\!\!X_{u,i}- 2^{m-1}\mu_i|\geq2^{m-1}c_{t,m-1}-2\norm{\varphi}\Bigr)\nonumber \\
&\leq t^{-4} \label{azal:eq:t-4:2}.
\end{align}
Let $L:=\log \frac{32(1+4\norm{\varphi})\ln n}{\Delta_i^2}$. Since, for $t \geq 2^L+1 $ and every $m \geq L+1$ we have $\mu^*-\mu_i-2c_{t,m-1} \geq 0$,
it follows that \eqref{eq:delta} is false for all $m \geq L+1$.
Therefore we have,
\begin{align}
 E(T_i(n))  
&\leq 2^{L+1}+\sum_{t=1}^{\infty}\sum_{m=L+1}^{\log t}\sum_{s=1}^{\log t}P(\overline{X}^*_{s}\leq \mu^*-c_{t,s} , \frac{1}{2^{m-1}}\sum_{u=t}^{t+2^{m-1}-1}X_{u,i}\geq \mu_i+c_{t,m-1})\nonumber \\
 &\leq 2^{L+1}+\sum_{t=1}^{\infty}\sum_{m=L+1}^{\log t}\sum_{s=1}^{\log t}2^{m+1}t^{-4}\label{azal:eq:exp3}\\
&\leq \frac{32(1+4\norm{\varphi})\ln n}{\Delta_i^2}+1+2\pi^2/3 \nonumber
\end{align}
where \eqref{azal:eq:exp3} follows from \eqref{azal:eq:t-4:1} and \eqref{azal:eq:t-4:2}.
\end{proof}
\section{Proofs for Strongly Dependent Reward Distributions}
 \subsection{Basic Bounds} \label{suppl:basic_bounds}
 Consider two independent and normally distributed random variables $X,Y$ with mean $\mu_X > \mu_Y$ and variance $\sigma_X^2,\sigma_Y^2$. Let $\Delta = \mu_X - \mu_Y >0$ and $\sigma^2 = \sigma_X^2 + \sigma_Y^2$. We derive in this section the following bounds which are crucial for the derivation of the regret. We use the notation $z^+ = \max\{0,z\}$ and $\phi(x)= (2 \pi)^{-1/2} \exp(-x^2/2)$ for the density function of the standard normal distribution.
\begin{align}
E(Y - X)^+ &\leq  \sigma \phi(\Delta/\sigma), \label{eq:gp1} \\
E(Y-X)^+ &\geq 0,  \label{eq:gp2} \\
E(X-Y)^+ &\leq  \sigma \phi(\Delta/\sigma) + \Delta, \label{eq:gp3}\\
E(X-Y)^+ &\geq \Delta. \label{eq:gp4}
\end{align}
The derivation is based on basic properties of Gaussian random variables. Recall that $Z = X-Y$ is normally distributed with mean $\Delta$ and variance $\sigma^2$. Therefore,
 \begin{align*}
\sqrt{2\pi} \sigma  E(X-Y)^+ &=  \int_{0}^\infty z \exp\left(- \frac{(z-\Delta)^2}{2 \sigma^2}\right) \\ 
 &=\int_{-\Delta}^\infty z \exp\left(- \frac{z^2}{2 \sigma^2}\right) 
 +  \Delta \int_{-\Delta}^\infty \exp\left(- \frac{z^2}{2 \sigma^2}\right)  \\
&=  
\sigma^2 \exp\left(- \frac{\Delta^2}{2 \sigma^2} \right)
 + \Delta \sigma \int_{-\infty}^{\Delta/\sigma} \exp\left(- \frac{z^2}{2}\right)
\end{align*}
Standard bounds on the cdf, as can be found in \citep{DUDLEYProb}[Lem. 12.1.6], lead to \eqref{eq:gp4}, i.e. 
\begin{align*}
\sqrt{2\pi} \sigma  E(X-Y)^+ &= \sigma^2 \exp\left(- \frac{\Delta^2}{2 \sigma^2} \right) 
+ \Delta \sigma \sqrt{2\pi} \left(1- \frac{1}{\sqrt{2\pi}} \int_{\Delta/\sigma}^\infty \exp\left(- \frac{z^2}{2}\right)\right) \\
&\geq \sigma^2 \exp\left(- \frac{\Delta^2}{2 \sigma^2} \right) 
+ \Delta \sigma \sqrt{2\pi} \left(1- \frac{\sigma}{\sqrt{2\pi} \Delta} \exp\left(-\frac{\Delta^2}{2\sigma^2} \right)\right) \\
&= 
\Delta \sigma \sqrt{2\pi}.
\end{align*}
Similarly, if we consider $Z = Y-X$ which has mean $-\Delta$
\begin{align*}
\sqrt{2\pi} \sigma  E(Y-X)^+ &=  \int_{0}^\infty z \exp\left(- \frac{(z+ \Delta)^2}{2 \sigma^2}\right) \\ 
&=  
\sigma^2 \exp\left(- \frac{\Delta^2}{2 \sigma^2} \right)
 -  \Delta \sigma \int_{\Delta/\sigma}^{\infty} \exp\left(- \frac{z^2}{2}\right).
\end{align*}
Applying the result from \cite{DUDLEYProb}[Lem. 12.1.6] leads here to the trivial lower bound $0$ and, hence, inequality \eqref{eq:gp2}.
We use the following inequalities to obtain upper bounds on $E(Y-X)^+$ and $E(X-Y)^+$. Let $Z$ be a standard normal random variable. Then 
\[
\Pr(Z \geq c) \geq  \begin{cases}
\phi(c)/(2c) &\text{if } c\geq 1, \\
 \phi(c) (1-c)/2 & \text{if } 0 \leq c < 1.
\end{cases}
\]
The first bound can be found in \citep{DUDLEYuclt}. The second bound is a straightforward adaptation of the techniques used to derive the first bound.
Applying these bounds we get the following upper bounds on $E(Y-X)^+$. If $\Delta/\sigma \geq 1$ then 
\begin{align*}
\sqrt{2\pi} \sigma  E(Y-X)^+
&\leq  \sigma^2 \exp\left(- \frac{\Delta^2}{2 \sigma^2} \right) - \Delta \sigma \frac{\sigma}{2 \Delta } \exp\left(-\frac{\Delta^2}{2\sigma^2} \right)  \\
&= \frac{\sigma^2}{2} \exp\left(- \frac{\Delta^2}{2 \sigma^2} \right) 
\end{align*}
and if $0 \leq \Delta/\sigma < 1$ 
\begin{align*}
\sqrt{2\pi} \sigma  E(Y-X)^+
&\leq  \sigma^2 \exp\left(- \frac{\Delta^2}{2 \sigma^2} \right) - \Delta \sigma \frac{(1-\Delta/\sigma)}{2  } \exp\left(-\frac{\Delta^2}{2\sigma^2} \right)   \\
&= (\sigma^2 - \Delta \sigma/2+  \Delta^2/2) \exp\left(- \frac{\Delta^2}{2 \sigma^2} \right) \\
&\leq  \sigma^2 \exp\left(- \frac{\Delta^2}{2 \sigma^2} \right)
\end{align*}
and the inequality \eqref{eq:gp3} follows. The upper bounds on $E(X-Y)^+$ are derived in the same way.
 For $\Delta/\sigma \geq 1$ these are
\begin{align*}
\sqrt{2\pi} \sigma E(X-Y)^+ 
 &\leq  \sigma^2 \exp\left(- \frac{\Delta^2}{2 \sigma^2} \right) - (\sigma^2/2) \exp\left(- \frac{\Delta^2}{2 \sigma^2} \right)  + \Delta \sigma \sqrt{2\pi} \\
 &= \frac{\sigma^2}{2} \exp\left(- \frac{\Delta^2}{2 \sigma^2} \right)
+\Delta \sigma \sqrt{2\pi}
\end{align*}
and for $0 \leq \Delta/\sigma \leq 1$
\begin{align*}
\sqrt{2\pi} \sigma E(X-Y)^+ 
 &\leq  \sigma^2 \exp\left(- \frac{\Delta^2}{2 \sigma^2} \right) - \frac{\Delta \sigma(1-\Delta/\sigma)}{2} \exp\left(- \frac{\Delta^2}{2 \sigma^2} \right)  + \Delta \sigma \sqrt{2\pi} \\
 &= (\sigma^2 - \Delta \sigma/2 +\Delta^2/2)\exp\left(- \frac{\Delta^2}{2 \sigma^2} \right) 
+ \Delta \sigma \sqrt{2\pi}
\end{align*}
and the inequality \eqref{eq:gp1} follows.

\subsection{Proof of Proposition \ref{steffen:GP}} \label{sec:UpperBoundGP}
We bound the regret of the two phases individually. Some technical steps are moved further below to streamline the discussion.

\paragraph{Regret of Phase I.}
We sweep through all $k$ arms at times $1$ to $k$, $m +1$ to $m+k$, etc. During each of these phases we build up regret. This regret can be bounded by
\begin{align}
\sum_{i=1}^k E(\max_{j\not = i } X_{m+i,j} &- X_{m+i,i})^+  \notag \\
&\leq  \sum_{i=1}^k \sum_{j \not = i} E(X_{m+i,j} - X_{m+i,i})^+ \notag \\
&\leq  (k-1)\sum_{i=1}^k  \max_{j \not = i}  E(X_{m+i,j} - X_{m+i,i})^+ \notag \\
&\leq (k-1) \sum_{i=1}^{k} \Delta_{i} + \sqrt{2}(k-1) \sum_{i=1}^k \phi(\Delta_i/\sqrt{2}) \notag \\
&\leq (k-1) \sum_{i=1}^k (\Delta_{i} + \sqrt{2}), \label{eq:regretPhaseI}
\end{align}
where $\Delta_i = \mu^* - \mu_i$ is the difference between the highest stationary mean of the $k$ arms and the stationary mean of arm $i$. Here, we use Inequality \eqref{eq:gp3} and the assumption that the variance of the individual processes is $1$. Therefore, the $\sigma$ appearing in the bound \eqref{eq:gp3} is $\sqrt{2}$ since $\sigma^2$ is the sum of the variances of the two individual processes.

\paragraph{Regret of Phase II.} To control the regret building up in the second phase we condition on the observations in a sweep at time $l m$, $ l\in \mathbb{N}$, i.e. on the observed pay-offs $x_1, \ldots, x_k$.  Arm $i^*$ is selected such that $x_{i^*} \geq \max_{i\leq k} x_i$ and this arm is played for $m-k$ steps. 
We need to control $E(\max_{i \leq k} (X_{t',i} - X_{t',\pi_{t'}})^+)$ for all $lm +k +1 \leq t' \leq (l+1)m$. Due to stationarity this is equal to  controlling $E(\max_{i \leq k} (X_{t,i} - X_{t,\pi_t})^+)$, for all $k+1 \leq t \leq m$.  

We use in the following $P_{x_1,\ldots, x_k}$, $P_{x_i,x_u}$ for the conditional distributions given that $X_{11} =x_1, \ldots, X_{kk} = x_k$ and $X_{ii}=x_i, X_{uu}= x_u$, and we use $\nu$ for the marginal measure. In Appendix \ref{sec:conditioning} below we show that  for any $k+1 \leq t \leq m$ it holds that
\begin{align}
E(\max_{i \leq k} (X_{t,i} &- X_{t,\pi_t})^+)  \notag \\
&\leq \sum_{u=1}^k \sum_{i \not =  u} \int  P_{x_1,\ldots,x_k}( u = i^*)  \times \int  (X_{t,i} - X_{t,u})^+ dP_{x_i, x_u} d\nu(x_1,\ldots,x_k). \label{eq:marginal_integral}
\end{align}
The inner integral in \eqref{eq:marginal_integral} can be bounded by using inequality \eqref{eq:gp1}
\begin{equation}
\int  (X_{t,i} - X_{t,u})^+ dP_{x_i,x_u} \leq  \sigma_{t,i} \phi\left(\frac{\Delta_{t,i}}{\sigma_{t,i}} \right) \!, \label{eq:cond_int_bound}
\end{equation}
where $\Delta_{t,i} = E(X_{t,u} - X_{t,i} | X_{i,i} = x_i, X_{u,u} = x_{u}) $ which equals $E(X_{t,u}  | X_{u,u} = x_{u}) - E(X_{t,i} | X_{i,i} = x_i)$ due to the independence between arm $u$ and $i$. These are posterior means of Gaussian processes given observations $x_u$ and $x_i$, respectively. Observe that the posterior mean for $X_{t,u}$ is related to the posterior mean of the zero mean Gaussian process $X_{t,u} -\mu_u$ through $E(X_{t,u}  | X_{u,u} = x_{u}) = \mu_u + E(X_{t,u} -\mu_u | X_{u,u} - \mu_u = x_{u}- \mu_u)$. The posterior equation for the zero mean Gaussian process $X_{t,u} -\mu_u$ given observation $x_u-\mu_u$ at time $u$ is $\cov(t-u)(x_u-\mu_u)$ since $\cov(0) = 1$. Therefore, $E(X_{t,u}  | X_{u,u} = x_{u}) = \mu_u + \cov(t-u)(x_u - \mu_u)$ and, hence,  $E(X_{t,u}  | X_{u,u} = x_{u}) = (1-\cov(t-u))\mu_u +\cov(t-u) x_u$. Similarly,   $E(X_{t,i}  | X_{i,i} = x_{i}) = (1-\cov(t-i))\mu_i +\cov(t-i) x_i$
By defining  $\tilde \Delta_{i} = x_{u} - x_i$ and by using the H\"older-continuity assumption, we obtain the following  lower bound. 
\begin{align}
\Delta_{t,i} &= (\mu_{u} - \mu_i)(1- \cov(t-k)) + \tilde \Delta_{i} \cov(t-k) +  \varepsilon(t) \notag \\
&\geq  \tilde \Delta_i -|\mu_{u} - \mu_i| |\cov(0)- \cov(t-k)| - \tilde \Delta_i |\cov(0) - \cov(t-k)|  - |\varepsilon(t)| \notag \\
&\geq \tilde \Delta_{i} - c(t-k)^\alpha (\Delta + \tilde \Delta_{i})  - 
\abs{\varepsilon(t)} \label{eq:delta_t_i_bound}
\end{align}
with $\varepsilon(t)$ being a term which can be bounded by  
$\abs{\varepsilon(t)} \leq (\Delta + \tilde \Delta_{i}) ck^\alpha$ (see Appendix \ref{sec:varepsilon}).
For the conditional variance, 
\begin{align*}\sigma^2_{t, i } = E(X^2_{t,u} | X_{u,u} 
 = x_{u}) - E(X_{t,u} | X_{u,u} 
 = x_{u})^2 + E(X^2_{t,i} | X_{i,i} 
 = x_{i}) - E(X_{t,i} | X_{i,i} 
 = x_{i})^2,
 \end{align*} 
 we obtain the following upper-bound   
\begin{equation}
\sigma_{t,i}^2 = 2  - \cov^2(t - u) - \cov^2(t- i)  \leq  2c(t - u)^\alpha + 2 c(t-i)^\alpha
 \leq 4 c t^\alpha, \label{eq:variance_bound}
\end{equation}
where we have used the posterior equation for the covariance, that the covariance is by assumption non-negative, and that, under the H\"older-continuity assumption, for a non-negative covariance  $\cov(l) \geq \max\{0, (1-cl^\alpha)\}$ and, hence,  
\begin{align*}
\cov^2(l) &\geq \cov(l) \max\{0, 1- cl^\alpha\} \\
&= 
\begin{cases} 
0 & \text{if } cl^\alpha \geq 1, \\
\cov(l) (1- cl^\alpha) & \text{otherwise,}
\end{cases} \\
 &\geq \begin{cases} 
0 & \text{if } cl^\alpha \geq 1, \\
1- 2 cl^\alpha + c^2l^{2\alpha} & \text{otherwise,}
\end{cases} \\
& \geq 1- 2 cl^\alpha.
\end{align*}
Combining \eqref{eq:cond_int_bound}, \eqref{eq:delta_t_i_bound} and \eqref{eq:variance_bound} we obtain  
\begin{align} 
&\int  (X_{t,i} - X_{t,u})^+ dP_{x_i,x_u}  \notag\\
& \leq  (2/\pi)^{1/2} c^{1/2} t^{\alpha/2} 
 \exp\left(- \frac{(\tilde \Delta_{i} - c((t-k)^\alpha +k^\alpha)
 (\tilde \Delta_{i} + \Delta))^2}{8 ct^{\alpha}} \right) \notag \\ 
 &=  (2/\pi)^{1/2} c^{1/2} t^{\alpha/2}\exp\left(- \frac{\tilde \Delta^2_{i}}{8 ct^{\alpha}} \Bigl(1 - c((t-k)^\alpha +k^\alpha)
 \Bigl(1 + \frac{\Delta}{\tilde \Delta_{i}}\Bigr)\Bigr)^2 \right). \label{eq:regretGPUp}
\end{align}
The bound is maximized for $t=m$. Substituting this bound back into \eqref{eq:marginal_integral} and after a few manipulations (see  Appendix \ref{sec:integration}) we see that
\begin{align}
\sum_{t=k+1}^m E\Bigl(&\max_{i \leq k} (X_{t,i} - X_{t,\pi_t})^+\Bigr) \notag \\ 
&\leq 
(m-k) \frac{a_m c^{1/2} k (k-1) }{8\pi (1-b_m)} 
\left(2\pi^{1/2}- (1-\Delta\sqrt{b_m/4}) 
\exp\left( - \frac{\Delta^2 b_m}{4}  \right) 
\right) \label{eq:regretPhaseII}
\end{align}
if $m>k$ and $\Delta < \sqrt{a_m}/(b_m\sqrt{2})$, 
where $a_m= 8cm^\alpha$ and $b_m= c((m-k)^\alpha + k^\alpha)$.

\paragraph{Combined Regret.} Combining \eqref{eq:regretPhaseI} with 
\eqref{eq:regretPhaseII} and using $\Delta_i \leq \Delta$ we can observe that the combined regret for any $m$ steps is  bounded by
\[
k(k-1) \left(\Delta + \sqrt{2} + (m-k)  
\frac{a_m c^{1/2}}{8\pi (1-b_m)} 
\left(2\pi^{1/2}- (1-\Delta\sqrt{b_m/4}) 
\exp\left( - \frac{\Delta^2 b_m}{4}  \right) 
\right)\right)
\]
if $m>k$ and $\Delta < \sqrt{a_m}/(b_m\sqrt{2})$.
 Given a time horizon $n$ we have 
$\lceil n/m \rceil$ many iterations of Phase I and II and the overall regret is bounded by 
\begin{align}
&(n/m + 1)k(k-1) \left(\Delta + \sqrt{2} +   
\frac{a_m c^{1/2}(m-k)}{8\pi (1-b_m)} 
\left(2\pi^{1/2}- (1-\Delta\sqrt{b_m/4}) 
\exp\left( - \frac{\Delta^2 b_m}{4}  \right) 
\right)\right) \notag \\
&\leq(n + m) k(k-1) \left(\frac{\Delta + \sqrt{2}}{m} +   
\frac{a_m c^{1/2}}{8\pi (1-b_m)} 
\left(2\pi^{1/2}- (1-\Delta\sqrt{b_m/4}) 
\exp\left( - \frac{\Delta^2 b_m}{4}  \right) 
\right)\right). \label{eq:reg_bound_app}
\end{align}
This is the regret bound stated in the proposition text. 

Instead of trying to find the $m$ that minimizes \eqref{eq:reg_bound_app} we optimize $m$ over the considerably simpler expression
\begin{equation} \label{eq:optM}
\frac{\Delta + \sqrt{2}}{m} +   
\frac{2c^{3/2}m^\alpha}{\sqrt{\pi}}\left(= \frac{\Delta + \sqrt{2}}{m} +   
\frac{a_m c^{1/2}}{4\sqrt{\pi}}\right).
\end{equation}
The $(1-b_m)$ term that we leave out is of minor relevance since $b_m$ is small and the negative term in the bracket that we leave out is not larger than $1$. By ignoring this latter term we lose only another constant. Minimizing  \eqref{eq:optM} with respect to $m$ and rounding up yields  
\[
m^\star = \left\lceil \left(\frac{\sqrt{\pi}(\Delta+  \sqrt{2})}{2\alpha c^{3/2}}\right)^{\frac{1}{1+\alpha}}  \right\rceil.
\] 
\subsubsection{Proof of Inequality \eqref{eq:marginal_integral}} \label{sec:conditioning}
Due to stationarity and since our policy depends only on the observations at the last sweep we have that  
$E(\max_{i \leq k} (X_{t',i} - X_{t',\pi_{t'}})^+) = E(\max_{i \leq k} (X_{t,i} - X_{t,\pi_t})^+)$ for 
any $t'$, $lm + k + 1 \leq  t' \leq lm$, $l \in \mathbb{N}$, and corresponding $t = t' -lm$. Now, consider any $t $,   $k + 1 \leq  t \leq m$ and, using $i^*$ for the choice of arm given the observations $X_{1,1}, \ldots, X_{k,k}$,  rewrite the regret in the following way 
\begin{align}
&E\Bigl(\max_{i \leq k} (X_{t,i} - X_{t,\pi_t})^+\Bigr)  \notag \\
&= \sum_{u=1}^k E\Bigl(\max_{i \leq k} \chi\{u = i^*\}\times (X_{t,i} - X_{t,i^*})^+\Bigr) \notag \\
&= \sum_{u=1}^k E\Bigl(\max_{i \not = u} \chi\{u = i^*\}\times (X_{t,i} - X_{t,u})^+\Bigr) \notag \\
&\leq \sum_{u=1}^k \sum_{i \not = u} E\Bigl( \chi\{u = i^*\}\times (X_{t,i} - X_{t,u})^+\Bigr)
 \notag \\
&= \sum_{u=1}^k \sum_{i \not =  u} \int \int  \chi\{u = i^*\}\times (X_{t,i} - X_{t,u})^+ dP_{x_1,\ldots, x_k} d\nu(x_1,\ldots,x_k) 
\notag \\
&= \sum_{u=1}^k \sum_{i \not =  u} \int  P_{x_1,\ldots,x_k}( u = i^*)  \times \int  (X_{t,i} - X_{t,u})^+ dP_{x_1,\ldots, x_k} d\nu(x_1,\ldots,x_k) \label{eq:cond1} \\
&= \sum_{u=1}^k \sum_{i \not =  u} \int  P_{x_1,\ldots,x_k}( u = i^*)  \times \int  (X_{t,i} - X_{t,u})^+ dP_{x_i, x_u} d\nu(x_1,\ldots,x_k) \label{eq:cond2} 
\end{align}
where \eqref{eq:cond1} follows because $\chi\{u = i^*\}$ is independent of $(X_{t,i} - X_{t,u})^+$ given $X_{11}=x_1,\ldots, X_{kk} = x_k$ and \eqref{eq:cond2} follows since $(X_{t,i} - X_{t,u})^+$ only depends on $X_{i,i}$ and $X_{u,u}$.

\subsubsection{Bound on $\varepsilon(t)$.} \label{sec:varepsilon}
The term $\varepsilon(t)$ that we introduced is equal to
\begin{align*}
\varepsilon(t) = &\mu_{u} (\cov(t-k) -\cov(t - u)) - \mu_i (\cov(t-k) -\cov(t- i)) \\
&+ x_{u} (\cov(t-u)-\cov(t-k)) - x_i (\cov(t - i) - \cov(t-k)).
\end{align*}
Since, due to our H\"older assumption, $|\cov(t-k) -\cov(t  - u)| \leq c (k-u)^\alpha \leq ck^\alpha$ and $|\cov(t-k) -\cov(t - i)| \leq ck^\alpha$, we have the following bound. 
\[
|\varepsilon(t)| \leq (\Delta + \tilde \Delta_{i}) ck^\alpha. 
\]

\subsubsection{Proof of Inequality \eqref{eq:regretPhaseII}} \label{sec:integration} 
Denote in the following $X_u := X_{u,u}$ and $X_i = X_{i,i}$ and 
recall that $\tilde \Delta_i = X_u - X_i$ is normally distributed with mean $\mu_u - \mu_i$ and variance $2$. 
Writing 
\[
f(X_u,X_i)  = (2/\pi)^{1/2} c^{1/2} m^{\alpha/2} \exp\left(- \frac{\tilde \Delta^2_{i}}{8 cm^{\alpha}} \Bigl(1 - c((m-k)^\alpha +k^\alpha)
 \Bigl(1 + \frac{\Delta}{\tilde \Delta_{i}}\Bigr)\Bigr)^2 \right)
\]
we have 
\begin{align}
&\int  P_{x_1,\ldots,x_k}(u = i^*)  \times \int  (X_{t,i} - X_{t,u})^+ dP_{x_i,x_u} \, d\nu(x_1,\ldots,x_k)  \notag \\
&\leq \int  P_{x_1,\ldots,x_k}(X_u \geq X_i)  \times \int  (X_{t,i} - X_{t,u})^+ dP_{x_i,x_u} \, d\nu(x_1,\ldots,x_k) \notag \\
&\leq \int  \chi\{x_u \geq x_i\} \times f(x_u,x_i)  \, d\nu(x_1,\ldots,x_k) \label{eq:condbound1} \\
&= \int  \chi\{x_u - x_i \geq 0\} \times f(x_u,x_i) \, d\nu(x_u,x_i), \label{eq:condbound2}
\end{align}
where the  inequality in \eqref{eq:condbound1} follows because  $\{X_u \geq X_i\}$ is only dependent on $x_u$ and $x_i$ and by bounding the inner integral with  \eqref{eq:regretGPUp}.
Multiplying the density of $\tilde \Delta_i$ by $f$ and using 
$a= 8cm^\alpha$, $b= c((m-k)^\alpha + k^\alpha)$, $d= (ac)^{1/2}/(4\pi)$,  
in the case where $\Delta  < \sqrt{a}/(b\sqrt{2})$, we obtain
\begin{align*}
\int  \chi\{x_u - x_i \geq 0\} &\times f(x_u,x_i) \, d\nu(x_u,x_i)\\
&=d \int_{0}^\infty \exp\left( - \frac{(\tilde \Delta_i (1 - b) - \Delta b )^2}{a} - \frac{(\tilde \Delta_i - (\mu_u - \mu_i))^2}{4}  \right) d \tilde \Delta_i  \\
&\leq d \int_{0}^\infty \exp\left( - \frac{(\tilde \Delta_i (1 - b) - \Delta b )^2}{a}  \right) d \tilde \Delta_i \\
&= \frac{d}{1-b} \int_{-\Delta b}^\infty \exp\left( - \frac{\tilde \Delta_i^2}{a}  \right) d \tilde \Delta_i \\
&= \frac{\sqrt{a} d }{(1-b)\sqrt{2}} \int_{-\infty}^{\frac{\sqrt{2}\Delta b}{\sqrt{a}}} \exp\left( - \frac{\tilde \Delta_i^2}{2}  \right) d \tilde \Delta_i \\
&=\frac{\sqrt{a} d \sqrt{2\pi} }{(1-b)\sqrt{2}} \left(1- \frac{1}{\sqrt{2\pi}} \int_{\frac{\sqrt{2}\Delta b}{\sqrt{a}}}^\infty \exp\left( - \frac{\tilde \Delta_i^2}{2}  \right) d \tilde \Delta_i \right)\\
&\leq \frac{\sqrt{\pi a} d  }{(1-b)} \left(1- \frac{1-\sqrt{2}\Delta b/\sqrt{a}}{2\sqrt{2\pi} } \exp\left( - \frac{\Delta^2 b^2}{a}  \right) \right) \\ 
&= \frac{a c^{1/2}  }{4(1-b)\sqrt{\pi}} \left(1- \frac{1-\sqrt{2}\Delta b/\sqrt{a}}{2\sqrt{2\pi} } \exp\left( - \frac{\Delta^2 b^2}{a}  \right) \right) \\
&\leq \frac{a c^{1/2}  }{8\pi (1-b)} \left(2\pi^{1/2}- (1-\Delta b/\sqrt{a}) \exp\left( - \frac{\Delta^2 b^2}{a}  \right) \right) 
\end{align*}
and for $m > k$, since then $a/4 \geq b$, this can be further bounded by   
\[
\frac{a c^{1/2}  }{8\pi (1-b)} \left(2\pi^{1/2}- (1-\Delta\sqrt{b/4}) \exp\left( - \frac{\Delta^2 b}{4}  \right) \right).
\]

\newpage

\begin{thebibliography}{17}
\providecommand{\natexlab}[1]{#1}
\providecommand{\url}[1]{\texttt{#1}}
\expandafter\ifx\csname urlstyle\endcsname\relax
  \providecommand{\doi}[1]{doi: #1}\else
  \providecommand{\doi}{doi: \begingroup \urlstyle{rm}\Url}\fi

\bibitem[Audiffren and Ralaivola(2015)]{AUDIF15}
J.~Audiffren and L.~Ralaivola.
\newblock Cornering stationary and restless mixing bandits with remix-ucb.
\newblock In \emph{Advances in Neural Information Processing Systems}, 2015.

\bibitem[Auer et~al.(2002)Auer, Cesa-Bianchi, and Fischer]{AUER02}
P.~Auer, N.~Cesa-Bianchi, and P.~Fischer.
\newblock Finite-time analysis of the multi-armed bandit problem.
\newblock \emph{Machine Learning}, 2002.

\bibitem[Auer and Ortner(2010)]{AUER10} 
P.~Auer and R.~Ortner.
\newblock UCB revisited: Improved regret bounds for the stochastic multi-armed bandit
problem. 
\newblock \emph{Periodica Mathematica Hungarica}, 61(1-2):55–65, 2010.


\bibitem[Bradley(2005)]{bradley2005basic}
R.~C. Bradley.
\newblock Basic properties of strong mixing conditions. A survey and some open
  questions.
\newblock \emph{Probability surveys}, 2\penalty0 (2):\penalty0 107--144, 2005.

\bibitem[Bradley(2007)]{bradley2007introduction}
R.~C. Bradley.
\newblock \emph{Introduction to Strong Mixing Conditions, Vols. 1,~2~and~3}.
\newblock Kendrick Press, 2007.

\bibitem[Cho and Meyer(2001)]{CHO01}
G.~E. Cho and C.~D. Meyer.
\newblock Comparison of perturbation bounds for the stationary distribution of
  a markov chain.
\newblock \emph{Linear Algebra and its Applications}, 2001.

\bibitem[Doukhan(1994)]{DOUK94}
P.~Doukhan.
\newblock \emph{Mixing: Properties and Examples}.
\newblock Springer Lecture Notes, 1994.

\bibitem[Dudley(2002)]{DUDLEYProb}
R.~M. Dudley.
\newblock \emph{Real Analysis and Probability}.
\newblock Cambridge University Press, 2002.

\bibitem[Dudley(2014)]{DUDLEYuclt}
R.~M. Dudley.
\newblock \emph{Uniform Central Limit Theorems}.
\newblock Cambridge University Press, second edition, 2014.

\bibitem[Fremlin(2010)]{FREM}
D.~H. Fremlin.
\newblock \emph{Measure Theory}.
\newblock Torres Fremlin, Colchester, England, 2010.
\newblock Vol. 1 and 2.

\bibitem[Gin\'e and Nickl(2016)]{GINE16}
E.~Gin\'e and R.~Nickl.
\newblock \emph{Mathematical Foundations of Infinite-dimensional Statistical
  Models}.
\newblock Cambridge University Press, 2016.

\bibitem[Guha et~al.(2010)Guha, Munagala, and Shi]{GUH10}
S.~Guha, K.~Munagala, and P.~Shi.
\newblock Approximation algorithms for restless bandit problems.
\newblock \emph{Journal of the ACM (JACM)}, 58\penalty0 (1):\penalty0 3, 2010.

\bibitem[Levin et~al.(2008)Levin, Peres, and Wilmer]{LEV08}
D.~A. Levin, Y.~Peres, and E.~L. Wilmer.
\newblock \emph{Markov Chains and Mixing Times}.
\newblock American Mathematical Society, 2008.

\bibitem[Ortner et~al.(2014)Ortner, Ryabko, Auer, and Munos]{ORT14}
R.~Ortner, D.~Ryabko, P.~Auer, and R.~Munos.
\newblock Regret bounds for restless Markov bandits.
\newblock \emph{Theoretical Computer Science}, 558:\penalty0 62--76, 2014.

\bibitem[Papadimitriou and Tsitsiklis(1999)]{TSI99}
C.~H. Papadimitriou and J.~N. Tsitsiklis.
\newblock The complexity of optimal queuing network control.
\newblock \emph{Mathematics of Operations Research}, 1999.

\bibitem[Rio(1999)]{RIO99}
E.~Rio.
\newblock \emph{Th{\'e}orie asymptotique des processus al{\'e}atoires
  faiblement d{\'e}pendants}.
\newblock Springer, 1999.

\bibitem[Shiryaev(1991)]{SHIR91}
A.~N. Shiryaev.
\newblock \emph{Probability}.
\newblock Springer, 1991.

\bibitem[Whittle(1988)]{WHIT88}
P.~Whittle.
\newblock Restless bandits: Activity allocation in a changing world.
\newblock \emph{Journal of Applied Probability}, 1988.

\end{thebibliography}

\end{document}